\newtheorem{theorem}{Theorem}[section]
\newtheorem{lemma}[theorem]{Lemma}
\newtheorem{corollary}[theorem]{Corollary}
\newtheorem{proposition}[theorem]{Proposition}
\theoremstyle{definition}
\newtheorem{definition}[theorem]{Definition}
\newtheorem{example}[theorem]{Example}
\newtheorem*{acknowledge}{Acknowledgments}
\newtheorem{remark}[theorem]{Remark}
\newcommand{\del}{\partial}
\newcommand{\Z}{\mathbb{Z}}
\newcommand{\R}{\mathbb{R}}
\newcommand{\C}{\mathbb{C}}
\newcommand{\CP}{\mathbb{CP}}
\newcommand{\D}{\mathbb{D}}
\newcommand{\Crit}{\textrm{Crit}}
\newcommand{\Critv}{\textrm{Critv}}
\newcommand{\Symp}{\textrm{Symp}}
\title[Lefschetz-Bott fibrations on line bundles]{Lefschetz-Bott fibrations on line bundles over symplectic manifolds}
\author[Takahiro Oba]{Takahiro Oba}
\address{Research Institute of Mathematical Sciences, Kyoto University, Kyoto 606-8502, Japan}
\email{oba@kurims.kyoto-u.ac.jp}
\subjclass[2010]{Primary 57R17; Secondary 57R65}
\thanks{This work was partially supported by JSPS KAKENHI Grant Number 18J01373}
\date{\today}
\begin{document}

\maketitle

%\tableofcontents

\begin{abstract}
We describe Lefschetz-Bott fibrations on complex line bundles over symplectic manifolds explicitly. 
As an application, we construct more than one strong symplectic filling of the link of the $A_{k}$-type singularity. 
%Moreover, we interpret this construction from the singular theoretical point of view. 
In the appendix, we show that the total space of a Lefschetz-Bott fibration over the unit disk serves as a strong symplectic filling 
of a contact manifold compatible with an open book induced by the fibration.
\end{abstract}

\section{Introduction}

Lefschetz fibrations have played an important role in the study of Stein fillings of contact manifolds. 
According to results of Akbulut and Ozbagci \cite{AO}, Loi and Piergallini \cite{LP} and Giroux and Pardon \cite{GP}, 
it is known that every Stein domain admits a Lefschetz fibration over a disk. 
Conversely, thanks to a result of Eliashberg \cite{El90} (see also \cite{CE}), the total space of a Lefschetz fibration over a disk 
admits a Stein structure. 
%Such a Lefschetz fibration gives an open book decomposition of the boundary of the total space of the fibration. In fact, 
%this open book is compatible with a contact structure on the boundary induced by the Stein structure. 
Lefschetz fibrations demonstrate their ability especially in construction of various Stein fillings of contact manifolds. 
Ozbagci and Stipsicz \cite{OS}, for example, constructed infinitely many Stein fillings of contact $3$-manifolds using 
Lefschetz fibrations. 
This result was generalized to higher dimensions by the author \cite{Oba} recently. 
Moreover, in the low-dimensional case, 
one can show uniqueness of Stein fillings of contact $3$-manifolds 
through Lefschetz fibrations combined with a result of Wendl \cite{Wen} (see \cite{PV} for example).

To see a broader class of symplectic fillings of contact manifolds via fibration-like structures, 
we need to consider other ones, not Lefschetz fibration. 
In this article, we discuss \textit{(symplectic) Lefschetz-Bott fibrations}, 
which can be seen as the complexification of Morse-Bott functions 
(see Definition \ref{def: Lefschetz-Bott} for the precise definition). 
Although these fibrations were introduced by Perutz \cite{Per} in a different context, 
it is natural to expect that they help us to study 
symplectic fillings of contact manifolds as well as Lefschetz fibrations. 
In fact, as shown in Appendix \ref{section: smoothing}, 
the total space of a symplectic Lefschetz-Bott fibration serves as a strong symplectic filling of 
a contact manifold. 
However, little is known about these fibrations, and hence 
we first find examples of symplectic manifolds that admit Lefschetz-Bott fibrations. 

\begin{theorem}\label{theorem}
Let $(M,\omega)$ be a closed symplectic manifold. 
Suppose that $[\omega/2\pi] \in H^{2}(M; \R)$ has an integral lift that is 
Poincar\'{e} dual to the homology class of a symplectic hypersurface in $(M,\omega)$. 
Then, a line bundle $L$ over $(M,\omega)$ with $c_{1}(L)=-[\omega/2\pi]$ 
admits a symplectic Lefschetz-Bott fibration over $\C$. 
%whose fibers are \red{symplectomorphic to} $M\setminus \nu_{H}$.
\end{theorem}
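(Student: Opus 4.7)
The plan is to construct the desired fibration explicitly by pairing a natural section of $L^*$ with the tautological fiber coordinate of $L$. By the hypothesis, $\Sigma \subset M$ is a symplectic hypersurface whose homology class is Poincar\'e dual to an integral lift of $[\omega/2\pi] = -c_1(L) = c_1(L^*)$. Hence $L^*$ is smoothly isomorphic to the line bundle $\mathcal{O}(\Sigma)$ and admits a smooth section $s \in \Gamma(M, L^*)$ whose transverse zero locus equals $\Sigma$. Using the duality pairing $L^* \otimes L \to \C$, I define
\[
f : L \to \C, \qquad f(v) := \langle s(\pi(v)),\, v \rangle,
\]
where $\pi : L \to M$ is the projection. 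The map $f$ is fiberwise $\C$-linear, and the remainder of the argument shows that $f$ is a symplectic Lefschetz-Bott fibration over $\C$.

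Next I identify the critical locus and verify the local normal form. Working in a local trivialization of $L$ near a point $p \in \Sigma$, I choose coordinates $(z_1, \ldots, z_n)$ on $M$ so that $\Sigma = \{z_1 = 0\}$ and $s$ is represented by the function $z_1$; let $w$ denote the induced fiber coordinate on $L$. Then locally $f(z, w) = z_1 w$, whose critical set is $\{z_1 = 0,\, w = 0\}$, i.e., the zero section of $L|_\Sigma$, canonically identified with $\Sigma$. In the two-dimensional normal slice $(z_1, w)$, the linear change of variables $u = (z_1 + w)/2$, $v = (z_1 - w)/(2i)$ converts $f$ to $(u, v) \mapsto u^2 + v^2$, the standard complex quadratic Lefschetz singularity. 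This yields the required Lefschetz-Bott normal form with critical submanifold $\Sigma$, which inherits a symplectic structure from $(M, \omega)$ by assumption.

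To promote $f$ to a symplectic fibration, I equip $L$ with a symplectic form via minimal coupling. Fix a Hermitian metric and a unitary connection $\nabla$ on $L$ with curvature $F_\nabla = i\omega$, consistent with $c_1(L) = -[\omega/2\pi]$. Set
\[
\Omega_L := \pi^* \omega + d\bigl(\phi(|v|^2)\, \alpha\bigr),
\]
where $\alpha$ is the connection 1-form on $L \setminus 0_L$ and $\phi$ is a smooth non-decreasing weight function with $\phi(0) = 0$ and $\phi'(0) > 0$, chosen so that $\Omega_L$ extends smoothly across the zero section and remains non-degenerate on all of $L$. The zero section is then a symplectic submanifold symplectomorphic to $(M, \omega)$, and each fiber of $\pi$ carries the standard fiberwise symplectic form. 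If necessary I add a small correction $\varepsilon\, f^*\bigl(\tfrac{i}{2}\, dz \wedge d\bar z\bigr)$ with $\varepsilon > 0$ to guarantee that every smooth fiber of $f$ is $\Omega_L$-symplectic.

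The principal technical obstacle is verifying the symplectic compatibility at the critical locus $\Sigma$: namely, that $\Omega_L$ restricts to a symplectic form on the two-dimensional normal slice on which $f$ takes the nodal form, and that the overall structure satisfies all the conditions in the paper's definition of a symplectic Lefschetz-Bott fibration. Concretely, one must arrange that $\Omega_L$ tames a complex structure on the normal bundle of $\Sigma \subset L$ for which the local model $z_1 w$ is holomorphic, and that its restrictions to the nearby smooth fibers of $f$ remain non-degenerate. A careful choice of $\phi$ and $\varepsilon$ reduces this to a linear algebra computation in the rank-two normal bundle of $\Sigma$ in $L$, while the global verification rests on standard facts about symplectic structures on Hermitian line bundles.
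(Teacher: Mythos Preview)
Your map $f(v)=\langle s(\pi(v)),v\rangle$ is exactly the map the paper builds, only phrased globally: the paper decomposes $M=V\cup\nu_M(H)$, takes $\pi_V$ to be projection to the fiber and $\pi_\nu([x,(r_1,\theta_1),(r_2,\theta_2)])=(\mu(r_1)r_2,\theta_1+\theta_2)$, which near the critical set is literally $z_1z_2$. Your identification of $\Crit(f)$ with the zero section over $\Sigma$ and the local quadratic normal form is correct and matches the paper's. Likewise, your minimal-coupling form $\Omega_L=\pi^*\omega+d(\phi(|v|^2)\alpha)$ is the paper's form $\Omega_0$ written invariantly.

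The genuine gap is in your last paragraph. You correctly flag the ``principal technical obstacle'' but your proposed remedy---tuning $\phi$ and $\varepsilon$---does not work. The paper computes that $\Omega_0$ restricted to a normal slice $D_x\cong\{(z_1,z_2)\}$ is \emph{not} the standard K\"ahler form: the coupling term $d(r_2^2\alpha_\nu)$ contributes a cross term $2r_1^2 r_2\,dr_2\wedge d\theta_1$ on $D_x$, which breaks compatibility with the integrable $J$ for which $z_1z_2$ is holomorphic. Since this cross term comes from the connection part of $\alpha$ and mixes the base-normal direction $z_1$ with the fiber direction $w$, no function $\phi$ of $|w|^2$ alone can remove it; and $\varepsilon f^*(\tfrac{i}{2}dz\wedge d\bar z)$ only adds further mixed terms. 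The paper's fix is a genuine deformation of the symplectic form: it replaces the offending primitive by $(1+f(r_1,r_2)r_2^2)r_1^2(d\theta_1-\alpha)$ with a cutoff $f$ that vanishes near the origin, obtaining $\Omega_1$ which \emph{is} the standard form $r_1dr_1\wedge d\theta_1+r_2dr_2\wedge d\theta_2$ on each $D_x$, then checks that $\Omega_1$ remains globally symplectic and is isotopic to $\Omega_0$ via Moser. Only after this deformation does the paper construct $J$ (via polar decomposition with an $S^1$-invariant metric) and verify holomorphicity and nondegeneracy of the normal Hessian. Your sketch contains the right picture but is missing precisely this step, which is the actual content of the paper's proof.
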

 
A theorem of Donaldson \cite{Do} provides tons of pairs of a symplectic manifold and its symplectic hypersurface 
satisfying the assumption of Theorem \ref{theorem}. 
Indeed, given a symplectic manifold $(M,\omega)$ with $[\omega/2\pi]\in H^{2}(M; \Z)$, 
there exists a sufficiently large $k>0$ such that 
$[k\omega/2\pi]$ is Poincar\'{e} dual to the homology class of a symplectic hypersurface in $(M, k\omega)$. 
In Section \ref{section: examples}, we will give more meaningful examples. 

We would like to emphasize that 
if the dimension of $M$ is greater than $2$, 
any line bundle over $M$ never admits a Lefschetz fibration by a topological reason (see Remark \ref{remark: Stein} for the details). 
Using the theorem, in Section \ref{section: compact fibers} we construct a Lefschetz-Bott fibration over the unit disk 
on the corresponding disk bundle, which is a not Stein in general but strong symplectic filling of the circle bundle. 
Hence, Lefschetz-Bott fibrations can capture not Stein but strong symplectic fillings of contact manifolds, 
which Lefschetz fibrations cannot do.
It is known that there are contact manifolds whose contact structures are not Stein fillable but strongly fillable. 
In Example \ref{example: cyclic}, we pick up the link of a cyclic quotient singularity as such a contact manifold and see that 
a strong symplectic filling of this contact manifold admits a Lefschetz-Bott fibration over a disk.

%Our construction follows Gay and Mark's one in \cite{GM}

We will apply the above theorem to construct various strong symplectic fillings of contact manifolds. 
In their paper, Acu and Avdek \cite{AA} (cf. \cite{AA2}) gave relations in symplectic mapping class groups of 
Milnor fibers. 
In fact, such a relation connects a resolution of an isolated singularity of a complex hypersurface with its Milnor fiber. 
This enables us to generalize a combinatorial technique of mapping class groups for low-dimensional Lefschetz fibrations. 
In Section \ref{section: fillings}, we construct more than one strong symplectic filling of 
the link of the $A_{k}$-type singularity. 
This construction is actually related to blowing-up process to obtain a resolution of the singularity (see Proposition \ref{prop: resolution}). 
Furthermore, we would like to point out a paper of Smith and Thomas \cite{ST} where they discussed a symplectic surgery along 
a Lagrangian sphere. Our operation to produce various symplectic fillings corresponds to their surgery. 
In other words, we realize their surgery as an operation for Lefschetz-Bott fibrations. See Remark \ref{remark: ST} for more details.

This article is organized as follows: 
In Section \ref{section: bundles}, we review Boothby-Wang bundles and line bundles associated to them. 
Section \ref{section: Lefschetz-Bott} is divided into five subsections. 
After reviewing fibered Dehn twists and Lefschetz-Bott fibrations in Section \ref{section: FDT} and \ref{section: def of LBF}, respectively, 
we prove the main theorem in Section \ref{section: LBF over C}. 
In Section \ref{section: compact fibers}, cutting the above fibration, we 
construct a Lefschetz-Bott fibration over a disk with compact fibers. 
We examine some examples in Section \ref{section: examples}. 
In Section \ref{section: A_k}, we discuss strong symplectic fillings of the link of 
the $A_{k}$-type singularity. 
Moreover, we interpret this result from the point of view of the singularity theory and Lefschetz-Bott 
fibrations.
To make this article self-contained, 
in Appendix \ref{section: smoothing}, 
we give an explicit description of smoothing of the corners of the total space of a Lefschetz-Bott fibration over the unit disk. 
As a consequence, we also show that the smoothed manifold is a strong symplectic filling of a contact manifold compatible with an open book induced by the fibration.

\section{Line bundles}\label{section: bundles}

\subsection{Boothby-Wang bundles and associated line bundles}

Let $(B, \omega)$ be a closed symplectic manifold with 
$[\omega/2\pi] \in H^{2}(B;\Z)$, i.e., 
the cohomology class $[\omega/2\pi] \in H^{2}(B; \R)$ lies in the image of 
the natural map $H^{2}(B; \Z) \rightarrow H^{2}(B; \R)$ induced by  
the canonical inclusion $\Z \hookrightarrow \R$. 
According to the construction due to Kobayashi \cite{Kobayashi}, 
there exists a principal $S^1$-bundle 
$p: P \rightarrow B$ with 
a connection $1$-form $\alpha \in \Omega^{1}(P)$ whose curvature form is equal to 
$d\alpha = {p}^{*}\omega$. 
We call this bundle $p: (P, \alpha) \rightarrow (B, \omega)$ the \emph{Boothby-Wang bundle} 
(or the \emph{prequantization bundle}) over $(B, \omega)$. 
In our convention, the Euler class of this bundle is $e(P)=-[\omega/2\pi]$.
Note that we identify the Lie algebra of $S^1$ with $\R$ and 
regard the connection $1$-form $\alpha$ on $P$ as an ($\R$-valued) differential $1$-form on $P$. 
Since $\omega$ is a symplectic form, 
$\alpha$ is a contact form on $P$ whose Reeb vector field $R_{\alpha}$ is the generator of the $S^1$-action on 
each fiber of $p: P \rightarrow B$.

In this article, we think two types of bundles associated to $p:P \rightarrow B$. 
Define two homomorphisms $\rho, \bar{\rho}: S^1 \cong \R/\Z \rightarrow U(1) \cong S^1$ by 
$$
	\rho(\theta) = e^{2\pi i \theta},\ \ 
	\bar{\rho}(\theta) =e^{-2\pi i \theta}. 
$$
Consider the associated bundle $P\times_{\rho} \C \rightarrow B$, that is, 
the quotient space of $P \times \C$ with the right $S^1$-action defined by 
$$
	(x,z) \cdot \theta = (x \cdot \theta, \rho(-\theta)z) = (x \cdot \theta, e^{-2\pi i \theta}z) 
$$
for $(x,z) \in P \times \C$ and $\theta \in S^1$. 
By definition, the first Chern class $c_{1}(P\times_{\rho} \C)$ of this bundle agrees with $e(P)$.
One can equip the total space with a symplectic structure as follows. 
Let 
$${p}^{*}\omega + d(r^2d\theta) + d(r^2\alpha) = d((1+r^2)(\alpha+d\theta))$$ 
be a $2$-form on $P \times \C$, 
where $(r,\theta)$ are polar coordinates of $\C$. 
This $2$-form is $S^1$-invariant and horizontal, and in particular its kernel at each point $(x,z)$ is spanned by 
the vector $(R_{\alpha}-\del_{\theta})|_{(x,z)}$ (see \cite[Lemma IV. 17]{Nied} for example). 
%Indeed, 
%$$
%	\{ d((1+r^2)(\alpha+d\theta)) \}^n = n(1+r^2)^{n-1}(d\alpha)^{n-1} \wedge dr^2 \wedge (d\theta + \alpha),
%$$
%where $\dim B = 2(n-1)$ (see \cite[Lemma IV. 17]{Nied}).
Therefore, the $2$-form on $P \times \C$ descends to the symplectic form $\omega_{\alpha}$ on $P \times_{\rho} \C$. 

Next, consider the other associated bundle, namely, $P \times_{\bar{\rho}} \C \rightarrow B$. 
The total space is obtained by dividing $P \times \C$ by the $S^1$-action given by 
$$
	(x,z) \cdot \theta = (x \cdot \theta, \bar{\rho}(-\theta)z) = (x \cdot \theta, e^{2\pi i \theta}z).
$$
This construction shows that this bundle $P\times_{\bar{\rho}} \C$ is dual to the previous bundle $P \times_{\rho} \C$, and 
$c_{1}(P\times_{\bar{\rho}} \C) = -e(P)$. 
Similar to the above argument, we can equip a symplectic structure on the disk bundle $P \times_{\bar{\rho}} \mathring{\D}$, 
where $\mathring{\D}$ is the open unit disk in $\C$. 
Define a $2$-form on $P \times \mathring{\D}$ by 
$$
	{p}^{*}\omega + d(r^2d\theta) - d(r^2\alpha) = d((1-r^2)(\alpha-d\theta)). 
$$
A straightforward computation shows that this $2$-form is $S^1$-invariant and horizontal, and its kernel at each point $(x,z)$ is spanned by 
the vector $(R_{\alpha}+\del_{\theta})|_{(x,z)}$. 
%because we have 
%$$
%	\{ d((1-r^2)(\alpha-d\theta)) \}^n = n(1-r^2)^{n-1}(d\alpha)^{n-1} \wedge dr^2 \wedge (d\theta - \alpha)
%$$ 
%similarly to the above. 
Hence, the $2$-form defines the symplectic form $\omega_{\bar{\alpha}}$ on $P \times_{\bar{\rho}} \mathring{\D}$. 
Furthermore, it is 
an exact form on the complement of the zero-section $B_{0} \subset P \times_{\bar{\rho}} \mathring{\D}$, that is, 
the $1$-form $\lambda_{\bar{\alpha}}=(1-r^2)(\alpha-d\theta)$ on $(P \times_{\bar{\rho}} \mathring{\D}) \setminus B_{0}$ satisfies 
$d\lambda_{{\bar{\alpha}}} = \omega_{\bar{\alpha}}|_{(P \times_{\bar{\rho}} \mathring{\D}) \setminus B_{0}}$. 
Let 
$$
	X_{\bar{\alpha}} = -\frac{1-r^2}{2r}\del_{r}
$$
be a vector field defined in $(P \times_{\bar{\rho}} \mathring{\D}) \setminus B_{0}$. 
Then, we have $\iota_{X_{\bar{\alpha}}} \omega_{\bar{\alpha}}=\lambda_{\bar{\alpha}}$. 
This shows that $X_{\bar{\alpha}}$ is $\omega_{\bar{\alpha}}$-dual to $\lambda_{\bar{\alpha}}$, which is 
called the \emph{Liouville vector field} of $\lambda_{\bar{\alpha}}$. 
%For $\delta$ be a number with $0< \delta < 1$. 
%The total space of the bundle $P\times_{\bar{\rho}} \D(\delta) \rightarrow B$ has a concave boundary. 

%\begin{table}[htb]
%  \begin{tabular}{|c|c|c|c|c|c|}\hline
%   			& connection on $P$ & curvature on $P$ & connection on $L$ & curvature on $L$ & Chern class\\ \hline
%    $i\R$-value & $A$ & $\Omega$ & $\nabla$ & $F^{\nabla}=-i\Omega$ & $-[\Omega/2\pi i]$ \\ \hline
%    $\R$-value & $\alpha=-iA$  & $\omega= -i\Omega$; $d\alpha=p^{*}\omega$& $\nabla$ & \red{$\omega$?} & $-[\omega/2\pi]$ \\ \hline
%  \end{tabular}
%\end{table}

\subsection{Decompositions of symplectic manifolds}\label{section: decomposition}

Let $(M, \omega)$ be a closed symplectic manifold with $[\omega/2\pi] \in H^{2}(M; \Z)$. 
Suppose that the cohomology class $[\omega/2\pi]$ is Poincar\'{e} dual to the homology class of 
a symplectic hypersurface $H \subset M$. 
Set $\omega_{H} = \omega|_{TH}$ and 
let $p_{H}: (P, \alpha) \rightarrow (H, \omega_{H})$ be the Boothby-Wang bundle over $(H, \omega_{H})$.   
It is easy to check that the symplectic form $\omega$ is exact on the complement $M \setminus H$. 
Thus, 
there is a primitive $1$-form $\lambda$ of $\omega$ on $M \setminus H$, i.e., $d\lambda=\omega|_{M \setminus H}$. 
By Weinstein's tubular neighborhood theorem coupled with an argument of Diogo and Lisi \cite[Lemma 2.2]{Diogo-Lisi}, 
there exists some $0<\delta<1$, a primitive $1$-form $\lambda$ of $\omega$ on $M \setminus H$ and a symplectic embedding 
$$
\varphi_{\nu}: (P \times_{\bar{\rho}} \D(\delta), \omega_{\bar{\alpha}}) \rightarrow (M, \omega)
$$ 
such that ${\varphi_{\nu}}^{*}\lambda = \lambda_{\bar{\alpha}}$ and the zero-section under $\varphi_{\nu}$ coincides with $H$, where 
$\D(\delta)$ is the closed disk in $\C$ of radius $\delta$. 
%\cyan{The latter condition ensures that the Liouville vector field $X_{\alpha}$ is sent to 
%the one $X$ that is $\omega$-dual to $\lambda$.}
Set $\nu_{M}(H) = \varphi_{\nu}(P \times_{\bar{\rho}} \D(\delta))$ and $V= M \setminus \mathring{\nu}_{M}(H)$, 
where $\mathring{\nu}_{M}(H)$ is the interior of $\nu_{M}(H)$. 
Endow $M \setminus H$ with the Liouville vector field $X$ of $\lambda$. 
Since $\varphi_{\nu}$ pulls back $\lambda$ to $\lambda_{\bar{\alpha}}$, 
its push-forward map sends $X_{\bar{\alpha}}$ to $X$. 

For future use, we define an annulus bundle associated to $P$. 
Take $\delta' \in \R$ such that $0< \delta < \delta' <1$, and embed 
$(P \times_{\bar{\rho}} D^{2}(\delta'), \omega_{\bar{\alpha}})$ into $(M, \omega)$ symplectically 
by a canonical extension $\hat{\varphi}_{\nu}$ of the previous embedding $\varphi_{\nu}$. 
%Here we require that ${\hat{\varphi}_{\nu}}^{*}\lambda = \lambda_{\bar{\alpha}}$. 
Set $A(\delta, \delta')= \{z \in \C \, | \, \delta \leq |z| \leq \delta' \}$, which is diffeomorphic to an annulus. 
Our desired annulus bundle is 
$$
	P \times_{\bar{\rho}} A(\delta, \delta') \rightarrow H. 
$$
Note that the image $\hat{\varphi}_{\nu}(P \times_{\bar{\rho}} A(\delta, \delta'))$ lies in $V$.

\subsection{Line bundles over symplectic manifolds}\label{section: line bundles}

Let $(M, \omega)$ and $H$ be a closed symplectic manifold and its symplectic hypersurface as above, respectively. 
We assume that $M$ is decomposed as $M= V \cup \nu_{M}(H)$ as in the previous subsection. 
Here we construct a principal $S^1$-bundle over $M$ based on \cite{CDvK} 
and its associated line bundle over each piece of the decomposition, and then 
we glue two line bundles together to obtain a line bundle over $M$. 

Let $p_{V}: V \times S^1 \rightarrow V$ be the projection to the first factor. 
It can be seen as a principal $S^1$-bundle, and its connection $1$-form is 
$$
	\alpha_{V} = \lambda + d\theta_{1},
$$ 
where $\theta_{1}$ is the standard coordinate of $S^1$. 
Consider the associated line bundle $(V\times S^1) \times_{\rho} \C \rightarrow V$ endowed with 
the symplectic structure $\omega_{\alpha_{V}}$ on the total space defined by 
$$
	\omega_{\alpha_{V}}= 
	{p_{V}}^{*}(d\lambda) + d(r_{2}^2d\theta_{2}) +d(r_{2}^{2}\alpha_{V}) = d((1+r_{2}^{2})(\lambda+d\theta_{1}+d\theta_{2})), 
$$
where $(r_{2}, \theta_{2})$ are polar coordinates of $\C$. 
In fact, this line bundle is isomorphic to the product bundle $\Pi_{V}: V \times \C \rightarrow V$ by 
$$
	\Psi_{V}: (V\times S^1) \times_{\rho} \C \rightarrow V \times \C, \  
	[x,\theta_{1},(r_{2}, \theta_{2})] \rightarrow (x,(r_{2}, \theta_{1}+\theta_{2})). 
$$
Note that 
$S^{1}$ acts on $V \times S^1$ by 
$$
 (x,\theta_{1}) \cdot \theta =  (x,\theta_{1}+\theta). 
$$
Letting $\Omega_{V} = d((1+r^2)(\alpha_{V}+d\theta)) \in \Omega^{2}(V\times \C)$, 
since we have $$\Psi_{V}^{*}((1+r^{2})(\lambda+d\theta)) = (1+r_{2}^{2})(\lambda+d\theta_{1}+d\theta_{2}),$$
the map $\Psi_{V}: ((V \times S^1) \times_{\rho} \C, \omega_{\alpha_{V}}) \rightarrow (V \times \C, \Omega_{V}) $ is a symplectomorpshim. 

Next, we discuss bundles over $\nu_{M}(H) \cong P \times_{\bar{\rho}} \D(\delta)$. 
Let $p_{\nu}: P \times \D(\delta) \rightarrow P \times_{\bar{\rho}} \D(\delta)$ be the natural projection, 
which is a principal $S^1$-bundle with the connection $1$-form 
$$
	\alpha_{\nu} =(1-r_1^2)\alpha+r_1^2 d\theta_{1}.
$$ 
Composing $\varphi_{\nu}$ with $p_{\nu}$, 
we obtain the principal $S^1$-bundle $\varphi_{\nu} \circ p_{\nu}: P \times \D(\delta) \rightarrow \nu_{M}(H)$ 
with the same connection $1$-form. 
Hence, the line bundle associated to $\varphi_{\nu} \circ p_{\nu}$ is given by 
$\Pi_{\nu}: (P \times \D(\delta)) \times_{\rho} \C \rightarrow \nu_{M}(H)$. 
Its total space 
is equipped with a symplectic structure $\omega_{\alpha_{\nu}}$ that lifts to the $2$-form on 
$(P \times \D(\delta)) \times \C $ defined by 
\begin{eqnarray}\label{eqn: symplectic form}
	d\alpha_{\nu} + d(r_{2}^2d\theta_{2}) + d(r_{2}^2 \alpha_{\nu}) = d((1+r_{2}^2)(\alpha_{\nu}+d\theta_{2})).
\end{eqnarray}

We claim that one can glue two line bundles $\Pi_{V}$ and $\Pi_{\nu}$ together symplectically, and 
the resulting bundle forms the line bundle $\Pi: L= (V\times \C) \cup ((P \times \D(\delta)) \times_{\rho} \C) \rightarrow M$ 
with $c_{1}(L)=- [\omega/2\pi]$.
To check this claim, 
enlarge the tubular neighborhood $\nu_{M}(H)$ slightly and take an extension 
$\hat{\varphi}_{\nu}: P \times_{\bar{\rho}} D^{2}(\delta') \rightarrow H$ for some $\delta'$ slightly greater than $\delta$.
%Set $A(\delta, \delta') = \{ z \in \C \mid \delta \leq |z| \leq \delta'\}$. 
We think of the image $\varphi_{\nu}(P \times_{\bar{\rho}} A(\delta, \delta')) \subset V$ as a collar neighborhood of $\del V$ in $V$ 
and set $\nu_{V}(\del V) = \varphi_{\nu}(P \times_{\bar{\rho}} A(\delta, \delta'))$.
Define a gluing map $\Phi: (P \times A(\delta, \delta')) \times_{\rho} \C  \rightarrow \nu_{V}(\del V) \times \C$ of the bundles by 
$$
	\Phi([x,(r_{1}, \theta_{1}), (r_{2}, \theta_{2})])=(\varphi_{\nu}([x,(r_{1},\theta_{1})]), (r_{2}, \theta_{1}+\theta_{2})). 
$$
It is easy to check that $\Phi^{*}d((1+r^2)(\lambda + d\theta)) = d(1+r_{2}^2)(\alpha_{\nu}+d\theta_{2})$ because 
${\varphi_{\nu}}^{*}\lambda = (1-r_{1}^{2})(\alpha-d\theta_{1})$, which implies 
that two bundles $\Pi_{V}$ and $\Pi_{\nu}$ are glued together symplectically. 
By construction, 
the curvature forms of the connections on $\Pi_{V}$ and $\Pi_{\nu}$ are $\omega|_{V}$ and $\omega|_{\nu_{M}(H)}$, respectively. 
Therefore, $c_{1}(L) = -[\omega/2\pi]$.

\section{Lefschetz-Bott fibrations}\label{section: Lefschetz-Bott}

\subsection{Fibered Dehn twists}\label{section: FDT}

%We briefly review fibered Dehn twists assuming that the reader is familiar with Dehn twists 
%(see \cite[(16.c)]{SeiBook} for the definition of a Dehn twist). 
%In the following, we define a submanifold of a symplectic manifold. 
%Although we deal with the case of codimension $c \geq 1$ in general in the appendix, 
%it suffices to consider only the case where $c=1$ to prove Theorem \ref{theorem}. 

%Let $p: P \rightarrow (B,\omega)$ be a principal $SO(c+1)$-bundle over a symplectic manifold $(B,\omega_{B})$ and 
%$\alpha$ a connection $1$-form of this bundle. 
%Let $T^{*}_{ \leq \lambda}S^{c}$ be the cotangent disk bundle of length $\lambda$. with the canonical symplectic structure 
%$\omega_{can}$. 
%We equip $(T^{*}_{\leq \lambda}S^{c}, \omega_{can})$ with the Hamiltonian $SO(c+1)$-action with 
%moment map $\Phi: T^{*}_{\leq \lambda}S^{c} \rightarrow \mathfrak{so}(n+1)^{*}$.

We first recall a Dehn twist in the lowest dimensional case. 
Let $T^{*}S^{1}=\R \times S^{1}$ be the cotangent bundle with the canonical symplectic structure $\omega_{can}=d(td\theta)$, 
where $S^1=\R/\Z$ and $(t,\theta) \in \R \times S^1$. 
Let $S_{0}$ be the zero-section of the bundle.
The moment map $\Phi: T^{*}S^1 \setminus S_{0}\rightarrow \R$, $\Phi(t,\theta)=|t|$ defines the Hamiltonian $S^1$-action 
on $T^{*}S^1 \setminus S_{0}$ given by 
$$
	\sigma_{\varphi}(t,\theta)=
	\begin{cases}
		(t,\theta+\varphi) & (t>0), \\
		(t,\theta-\varphi) & (t<0),
	\end{cases}
$$
for $\varphi \in S^1$ and $(t,\theta) \in T^{*}S^1$.
Take a cut-off function $g:\R \rightarrow \R$ with $\text{Supp}(g) \subset (-\infty, \epsilon)$ (for some $\epsilon>0$)
such that $g(s)=1/2$ for $s$ near $0$. 
Define the map $\tau: (T^{*}S^{1}, \omega_{can}) \rightarrow (T^{*}S^1, \omega_{can})$, called a 
\textit{model Dehn twist}, by 
$$
	\tau(t,\theta) = 
	\begin{cases}
		\sigma_{g(|t|)}(t,\theta) & \text{if} \ (t,\theta)\in T^{*}S^{1} \setminus S_{0}, \\ 
		(t, \theta+1/2) & \text{if} \ (t,\theta) \in S_{0}.
	\end{cases}
$$
By definition, it is compactly supported and a symplectic automorphism of $(T^{*}S^1, \omega_{can})$. 
Moreover, it is independent of the choice of $g$ up to isotopy. 

%Let $(M, \omega)$ be a symplectic manifold. 
%A submanifold $C$ of $(M, \omega)$ is called a \textit{coisotropic submanifold} if 
%$(T_{x}C)^{\omega} \subset T_{x}C$ at every point $x\in C$, where $(T_{x}C)^{\omega}$ is the symplectic complement of $T_{x}C$.
%A coisotropic submanifold $C \subset (M,\omega)$ is said to be \textit{spherically fibered} if 
%$C$ is a submanifold $C \subset M$ of codimension $c \geq 1$ such that: 
%the null foliation of $C$ is fibrating over a symplectic manifold $B$ with fiber a $c$-dimensional sphere $S^{c}$; 
%the structure group of $p:C \rightarrow B$ is equipped with a reduction to $SO(c+1)$, that is, 
%a principal $SO(c+1)$-bundle $P\rightarrow B$ and a bundle isomorphism $P \times_{SO(c+1)}S^{c} \cong C$. 
%For example, if $C$ is Lagrangian especially, 
%$C$ is isomorphic to the associated bundle $P \times_{SO(n+1)} S^{n}$ to a principal $SO(n+1)$-bundle $P$ over a point, and hence 
%$C$ is a so-called \textit{framed Lagrangian sphere} \cite[(16.a)]{SeiBook}. 

Let $(B,\omega_{B})$ be a symplectic manifold with $[\omega_{B}/2\pi] \in H^{2}(B;\Z)$ and 
$p: (P, \alpha) \rightarrow (B, \omega_{B})$ the Boothby-Wang bundle over $(B,\omega_{B})$. 
%Since $T^{*}S^1$ admits the $S^{1}$-action defined by $\varphi \cdot (t, \theta)= (t, \theta+\varphi)$, 
%The moment map $T^{*}S^1 \rightarrow \R$, $(t,\theta) \mapsto t$ defines (see \cite[Exercise 5.2.2]{MS3}) 
Considering the Hamiltonian $S^1$-action on $T^{*}S^1$ defined by 
$(t,\theta) \cdot \varphi = (t,\theta+\varphi)$ for $(t,\theta) \in T^{*}S^{1}$ and $\varphi \in S^{1}$, 
we can associate the $T^{*}S^1$-bundle $P \times_{S^1} T^{*}S^1 \rightarrow B$ to $p$, where $S^{1}$ acts 
on $P \times T^{*}S^1$ by 
$$
	 (x,(t,\theta)) \cdot \varphi=(x\cdot \varphi, (t,\theta-\varphi)).
$$
Equip the total space $P \times_{S^1} T^{*}S^1$ with the symplectic structure $\omega_{P}$ given by 
$$
	\omega_{P}={p}^{*}\omega_{B} + d(td\theta) + d(t\alpha). 
$$ 
Let us define a \textit{model fibered Dehn twist} $\tau_{P}$ by 
$$
	\tau_{P}([x,(t,\theta)])=[x,\tau(t,\theta)], 
$$
which is a compactly supported symplectic automorphism of $(P\times_{S^{1}} T^{*}S^{1}, \omega_{P})$. 

Let $(V, \omega)$ be a Liouville domain whose boundary is the Boothby-Wang bundle $p:(P, \alpha) \rightarrow (B,\omega_{B})$. 
We identify a collar neighborhood of $\del V$ with $((-\epsilon', 0] \times P, d(e^t \alpha))$.
Since $P$ is a codimension $1$ submanifold of $V$, $\{pt\} \times P$ is a coisotropic submanifold of $(V, \omega)$. 
By the tubular neighborhood theorem of coisotropic submanifolds \cite[p.124]{MS3}, 
a neighborhood $\nu$ of $\{-\epsilon'/2 \} \times P$ in $V$ is symplectomorphic to 
a neighborhood $\nu'$ of $P \times_{S^1} S_{0}$ in $P\times_{S^1}T^{*}S^{1}$. 
Denoting this symplectomorphism by $\phi: \nu \rightarrow \nu'$, we define the map $\tau_{\del}: V \rightarrow V$ by 
$$
	\tau_{\del}= 
	\begin{cases}
		\phi \circ \tau_{P} \circ \phi^{-1} & \text{on}\   \nu, \\
		\text{id} & \text{on}\  V \setminus \nu,
	\end{cases}
$$
where we choose $\tau_{P}$ whose support is contained in $\nu'$. 
The map $\tau_{\del}$ is a compactly supported symplectic automorphism of $(V,\omega)$ and called a \textit{fibered Dehn twist} along 
$\del V$.

\begin{remark}
In general, a fibered Dehn twist can be defined for a spherically fibered coisotropic submanifold of a symplectic manifold \cite{Per, WW}. 
%A submanifold $C$ of $(M, \omega)$ is called a \textit{coisotropic submanifold} if 
%$(T_{x}C)^{\omega} \subset T_{x}C$ at every point $x\in C$, where $(T_{x}C)^{\omega}$ is the symplectic complement of $T_{x}C$.
A coisotropic submanifold $C$ of a symplectic manifold $(M,\omega)$ is said to be \textit{spherically fibered} if 
$C$ is a submanifold $C \subset M$ of codimension $c \geq 1$ such that: 
the null foliation of $C$ is fibrating over a symplectic manifold $B$ with fiber a $c$-dimensional sphere $S^{c}$; 
the structure group of $p:C \rightarrow B$ is equipped with a reduction to $SO(c+1)$, that is, 
a principal $SO(c+1)$-bundle $P\rightarrow B$ and a bundle isomorphism $P \times_{SO(c+1)}S^{c} \cong C$.  
For example, the above Boothby-Wang bundle $P =\del V$ is a spherically fibered coisotropic submanifold of $(V, \omega)$.
%As an another example, if $C$ is Lagrangian especially, 
%$C$ is isomorphic to the associated bundle $P \times_{SO(n+1)} S^{n}$ to a principal $SO(n+1)$-bundle $P$ over a point, and hence 
%$C$ is a \textit{framed Lagrangian sphere} (see \cite[(16.a)]{SeiBook}). 
\end{remark}

\subsection{Preliminaries of symplectic Lefschetz-Bott fibrations}\label{section: def of LBF}
Let $M$ be a $2n$-dimensional smooth manifold equipped with a closed $2$-form $\Omega$ and an almost complex structure $J$. 

\begin{definition}[\cite{Per, WW}]
Let $N$ be an almost complex submanifold  of $(M,J)$. 
The $2$-form $\Omega$ is said to be \textit{normally K\"{a}hler} near $N$ if there exists 
a tubular neighborhood $\nu_{M}(N)$ of $N$ in $M$, foliated by normal slices $\{D_{x} \subset \nu_{M}(N)\}_{x \in N}$, such that 
$J|_{TD_{x}}$ is integrable and $\Omega|_{TD_{x}}$ is K\"{a}hler for each $x \in D_{x}$.
\end{definition}

%Let $D^2$ be a closed disk with the standard orientation.
%For a smooth map $\pi: X \rightarrow Y$ form a smooth manifold $X$ to another $Y$, 
%we denote by $\Crit(\pi)$ the set of critical points of $\pi$ and 
%set $\Critv(\pi)= \Critv(\pi)$. 

\begin{definition}[\cite{Per,WW}]\label{def: Lefschetz-Bott}
A \textit{symplectic Lefschetz-Bott fibration} 
$(E, \pi, \Omega, J, j)$ on $E$ 
consists of an even dimensional open manifold $E$ with a closed $2$-form $\Omega$ and 
a smooth surjective map $\pi:E \rightarrow \C$ with $\Crit(\pi) \subset \mathring{E}$ and $\Critv(\pi) \subset \C$, 
where $\Crit(\pi)$ and $\Critv(\pi)$ are the set of critical points of $\pi$ and critical values of $\pi$, respectively; 
an almost complex structure $J$ defined in a neighborhood $\mathcal{U} \subset E$ of $\Crit(\pi)$; 
a positively oriented complex structure $j$ defined in a neighborhood $\mathcal{V} \subset D^2$ of $\Critv(\pi)$ 
satisfying the following conditions: 
\begin{enumerate}
\renewcommand{\theenumi}{\roman{enumi}}
\renewcommand{\labelenumi}{(\theenumi)}

\item\label{condition: restriction}  
the restriction $\pi|_{\mathcal{U}}: \mathcal{U} \rightarrow \mathcal{V}$ is a $(J, j)$-map; 

\item\label{condition: submanifold}  
the critical point set $\Crit(\pi)$ is a smooth submanifold with finitely many connected components; 

\item\label{condition: Hessian} 
the holomorphic normal Hessian $D^2\pi_{x}|_{T^{\otimes2}D_{x}}$
%, which is locally written as the matrix $(\frac{\del^{2} \pi}{\del z_{i}\del z_{j}})$, 
is non-degenerate at each point $x\in \Crit(\pi)$, 
where $D_{x}$ is a normal slice of a tubular neighborhood of $\Crit(\pi)$;

 \item\label{condition: non-degeneracy}  
$\Omega|_{\mathcal{U}}$ is non-degenerate, compatible with $J$ and normally K\"{a}hler near $\Crit(\pi)$;

\item\label{condition: fibers} 
 $\Omega$ is non-degenerate on $\ker (D\pi) \subset TE$.

%\red{Can we rephrase this condition to the standard form $z \mapsto z_1^2+ \cdots z_c^2$? See 
%[Arnold etal. Theorem p.187]}
%for each fiber $N_{x}$ of the normal bundle $N \rightarrow \Crit(\pi)$; 

\end{enumerate}

If $E$ is a compact manifold with boundary and codimension $2$ corners, replace the range of $\pi$ by a closed $2$-disk $D^2$ with the standard orientation, and 
the tuple $(E, \pi, \Omega, J, j)$ satisfies two more conditions: 
\begin{enumerate}
\renewcommand{\theenumi}{\roman{enumi}}
\renewcommand{\labelenumi}{(\theenumi)}
\setcounter{enumi}{5} 
\item
 the boundary $\del E$ consists of the vertical boundary $\del_{v}E$ and 
the horizontal boundary $\del_{h}E$ meeting at the corner,  
where 
$$
	\del_{v}E=\pi^{-1}(\del D^2) \ \text{and} \ \del_{h}E=\cup_{y\in D^2}\del(\pi^{-1}(y)). 
$$
We require that if $x$ lies in $\del_{h}E$, then $(\ker (D\pi_{x}))^{\Omega} \subset T_{x}\del_{h}E$;

\item\label{condition: horizontal triviality} 
the map $\pi$ restricts to a fibration on $\del_{v}E$, and on a neighborhood of $\del_{h}E$, $\pi$ is equivalent to the projection 
$$
	 \nu_{F}(\del E_{z}) \times D^2 \rightarrow D^2, 
$$
with $\Omega$ identified with the split form $\omega_{f}+\pi^{*}\omega_{b}$ for some $K>0$, 
where $E_{z}$ is the regular fiber of $\pi$ over $z$, $\omega_{f}= \Omega|_{TE_{z}}$ and $\omega_{b}$ is some symplectic form on $D^2$. 

\end{enumerate}
\end{definition}

For the sake of brevity, we often denote a symplectic Lefschetz-Bott fibration $(E,\pi, \Omega, J, j)$ by 
$\pi: (E,\Omega) \rightarrow \C$ or $\pi: E \rightarrow \C$. 
If $\dim(\Crit(\pi))=0$, 
a symplectic Lefschetz-Bott fibration $\pi$ is a \textit{Lefschetz fibration}.

\begin{remark}
Occasionally we focus on the topology of the total space of a symplectic Lefschetz-Bott fibration. 
Then, we use an alternative notion to symplectic Lefschetz-Bott fibration: 
A tuple $(E,\pi, \Omega, J,j)$ is a \textit{smooth Lefschetz-Bott fibration} if $E, \pi, J, j$ are the same as in Definition 
\ref{def: Lefschetz-Bott}; $\Omega$ is a closed $2$-form defined in a tubular neighborhood $\mathcal{U}$ of $\Crit(\pi)$; 
the tuple satisfies the conditions from (\ref{condition: restriction}) to (\ref{condition: non-degeneracy}) in Definition 
\ref{def: Lefschetz-Bott}.
\end{remark}

%\begin{remark}
%As Seidel pointed out in his book \cite[Remark 15.2]{SeiBook}...
%\begin{enumerate}
%\renewcommand{\theenumi}{\roman{enumi}}
%\renewcommand{\labelenumi}{(\theenumi)}
%\setcounter{enumi}{7} 
%\end{enumerate}
%\end{remark}

%
%\red{
%ADD THE FOLLOWINGS:
%\begin{itemize}
%\item matching paths;
%\item vanishing thimbles;
%\item vanishing bundles;
%\item distinguished basis.
%\end{itemize}
%}

%\begin{remark} A \emph{smooth} Lefschetz-Bott fibration on a smooth $2n$-manifold $W$ with codimension $2$ corners,
%is a smooth map  $\pi : W  \to D^2$  which satisfies the first and
%the second conditions listed in the Definition~\ref{definition: exact symplectic Lefschetz} 
%except the K\"{a}hlerness condition.
%\end{remark}

Next, we briefly review materials related to symplectic Lefschetz-Bott fibrations following \cite[Section 2.1]{WW}. 
It suffices to think a symplectic Lefschetz-Bott fibration $\pi: (E,\Omega) \rightarrow D^2$ over a closed $2$-disk 
because given a symplectic Lefschetz-Bott fibration over $\C$, 
one can take a disk $D^2 \subset \C$ containing all its critical values.
Moreover, without loss of generality, we may assume that for each critical value, there exists only one connected component of 
$\Crit(\pi)$. 

Let us take a point $z_{0} \in \del D^2$ and let $\Critv(\pi)=\{z_{1}, \ldots, z_{k}\}$. 
Set $(E_{z},\Omega_{E_{z}})=(\pi^{-1}(z), \Omega|_{\pi^{-1}(z)})$.
For any critical value  $z \in \Critv(\pi)$,  
a \textit{vanishing path}
is an embedded path $\gamma : [0,1] \rightarrow D^2$ such that
$\gamma(0)=z_0$, $\gamma(1)=z$ and $\gamma^{-1}(\Critv(\pi))=\{1\}$. 
To each such a path and the connected component $B$ of $\Crit(\pi)$ in $E_{\gamma(1)}$,
one can associate a smooth submanifold, called the \textit{vanishing thimble} $T_{\gamma}$ for $\gamma$, defined by 
$$
	T_{\gamma} = 
	\left(\cup_{t\in[0,1)} \{ x \in E_{\gamma(t)} \mid \lim_{t_{0} \rightarrow 1}\Gamma_{\gamma|[t,t_{0}]}(x) \in B \} \right) \cup B, 
$$
where $\Gamma_{\gamma|[t,t_{0}]}:E_{\gamma(t)} \rightarrow E_{\gamma(t_{0})}$ 
is the parallel transport along the restricted curve $\gamma|: [t,t_{0}] \rightarrow D^2$ 
determined by the canonical symplectic connection on $\pi^{-1}(\gamma([t,t_{0}]))$. 
Each intersection $T_{\gamma} \cap E_{\gamma(t)}$ is a coisotropic submanifold 
in $(E_{\gamma(t)}, \Omega_{E_{\gamma(t)}})$, and 
$C_{\gamma}=T_{\gamma} \cap E_{z_{0}=\gamma(0)}$ is called the \textit{vanishing cycle} for $\gamma$.
In fact, this coisotropic submanifold is spherically fibered (\cite[Proposition 2.3]{WW}). 

A \textit{distinguished basis} of vanishing paths is an ordered collection of
vanishing paths $(\gamma_1, \ldots, \gamma_k)$ each of which starts at $z_{0}$ and ends at a critical value 
such that the following condition holds: 
\begin{itemize}
\item The path $\gamma_j$ intersects $\gamma_k$ only at $z_{0}$ for $j\neq k$;
\item Thinking of the tangent vectors $\gamma'_{1}(0), \ldots, \gamma'_{k}(0)$ as vectors in the upper half-plane, 
they have a natural clockwise ordering. 
\end{itemize}
Given a symplectic Lefschetz-Bott fibration $\pi: (E,\Omega) \rightarrow D^2$ and 
a distinguished basis $(\gamma_{1}, \ldots, \gamma_{k})$ of vanishing paths based at $z_{0}\in \del D^2$, 
we obtain a collection $(C_{\gamma_{1}}, \ldots, C_{\gamma_{k}})$ of spherically fibered coisotropic submanifolds in 
the fiber $(E_{z_{0}}, \Omega|_{E_{z_{0}}})$. 
Conversely, based on \cite[Theorem 2.13]{WW}, 
one can show that given a symplectic manifold $V$ and 
a collection $(C_{1}, \ldots, C_{k})$ of spherically fibered coisotropic submanifolds in $V$, 
there is a symplectic Lefschetz-Bott fibration $\pi:E \rightarrow D^2$ with fibers symplectomorphic to $V$ such that 
a collection of vanishing cycles with respect to some distinguished basis of vanishing paths coincides with the given collection 
(cf. \cite[Lemma 16.9]{SeiBook}).

\subsection{Fibrations on line bundles}\label{section: LBF over C}

Now we construct a symplectic Lefschetz-Bott fibration on the total space of the line bundle $\Pi: L \rightarrow M$ constructed in 
Section \ref{section: line bundles}. 
Let us denote $(P\times \D(\delta)) \times_{\rho} \C$ by $P(\D(\delta), \C)$.  
Define maps $\pi_{V}: V \times \C \rightarrow \C$ and 
$\pi_{\nu}: P(\D(\delta), \C) \rightarrow \C$
 by  
\begin{eqnarray}
 & \pi_{V}(x,r, \theta) = (r, \theta), \\
 & \pi_{\nu}([x, (r_{1}, \theta_{1}), (r_{2}, \theta_{2})]) = (\mu(r_{1})r_{2}, \theta_{1}+\theta_{2}). 
\end{eqnarray}
Here $\mu: \R \rightarrow \R$ is a smooth function such that $\mu(r)=r$ for $r \leq \epsilon$, 
$\mu(r)\equiv1$ for $r$ near $\delta$ and $\mu'(r) \geq 0$ at any point $r$, 
where $\epsilon>0$ is sufficiently small (Figure \ref{fig: rho}). 
Set $\pi = \pi_{V} \cup \pi_{\nu}: L \rightarrow \C$, namely 
$$
\pi(p)=
\begin{cases}
	\pi_{V}(p) & \text{if} \ p \in V \times \C, \\
	\pi_{\nu}(p) & \text{if} \ p \in P(\D(\delta), \C).
\end{cases} 
$$
By definition, 
$\Crit(\pi)$ coincides with $H_{0}=\{ [x, z_{1}, z_{2}] \in P(\D(\delta), \C) \mid z_{1}=z_{2}=0 \}$, 
which can be seen as 
the zero-section of the $(\D(\delta) \times \C)$-bundle $P(\D(\delta), \C) \rightarrow H$.

\begin{figure}[t]
\begin{tabular}{cc}
\begin{minipage}{0.45\hsize}
	\centering
	\begin{overpic}[width=150pt,clip]{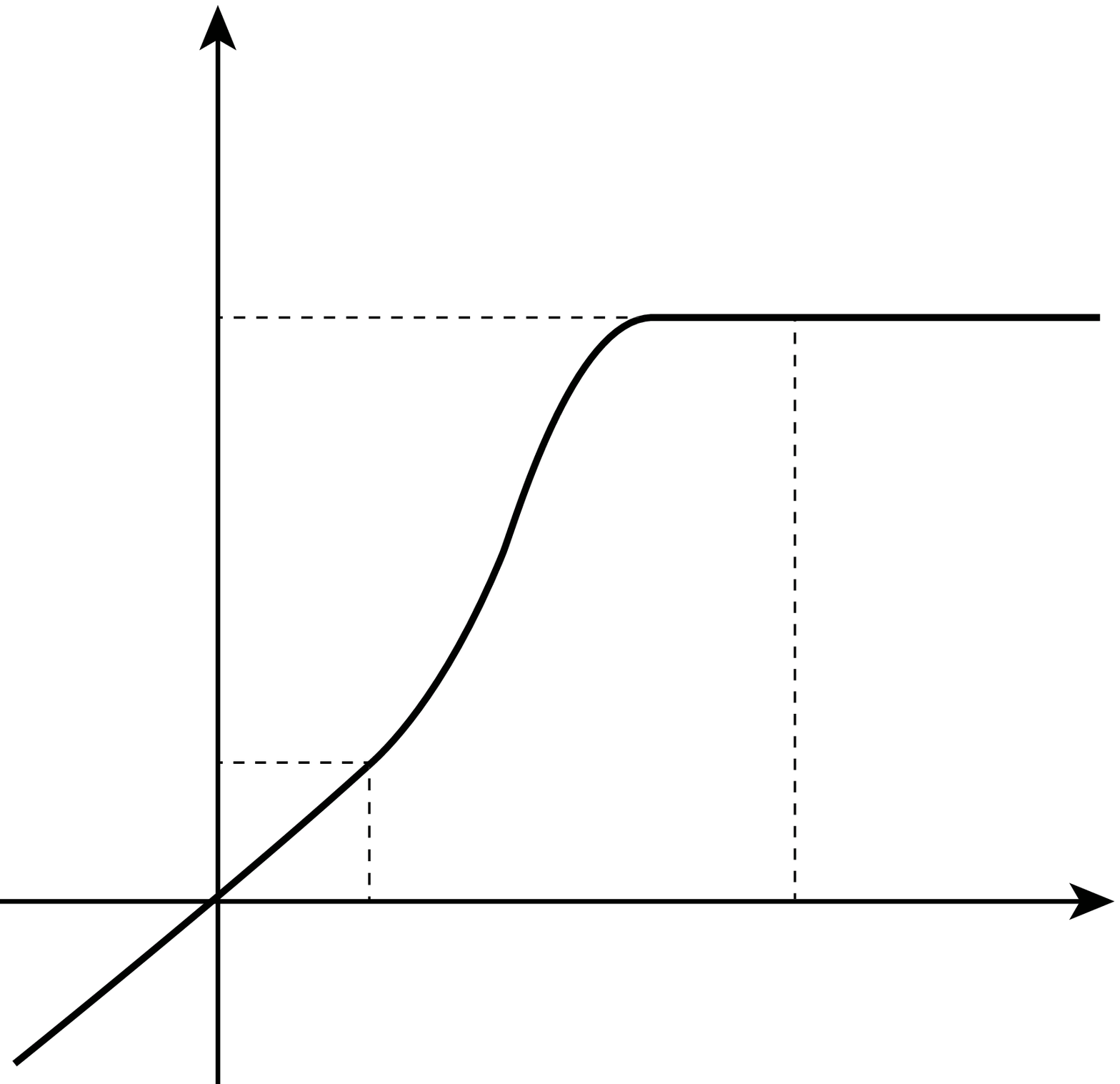}
	 \linethickness{3pt}
	\put(155,20){$r$} 
	\put(7,135){$\mu(r)$}
  	\put(31,15){$O$}
	\put(21,100){$1$}
	\put(103,15){$\delta$}
	\put(21,40){$\epsilon$}
	\put(48,15){$\epsilon$}
	\end{overpic}
	\caption{}
	\label{fig: rho}
\end{minipage}
\begin{minipage}{0.45\hsize}
%\end{figure}
%\begin{figure}[ht]
	\centering
	\begin{overpic}[width=150pt,clip]{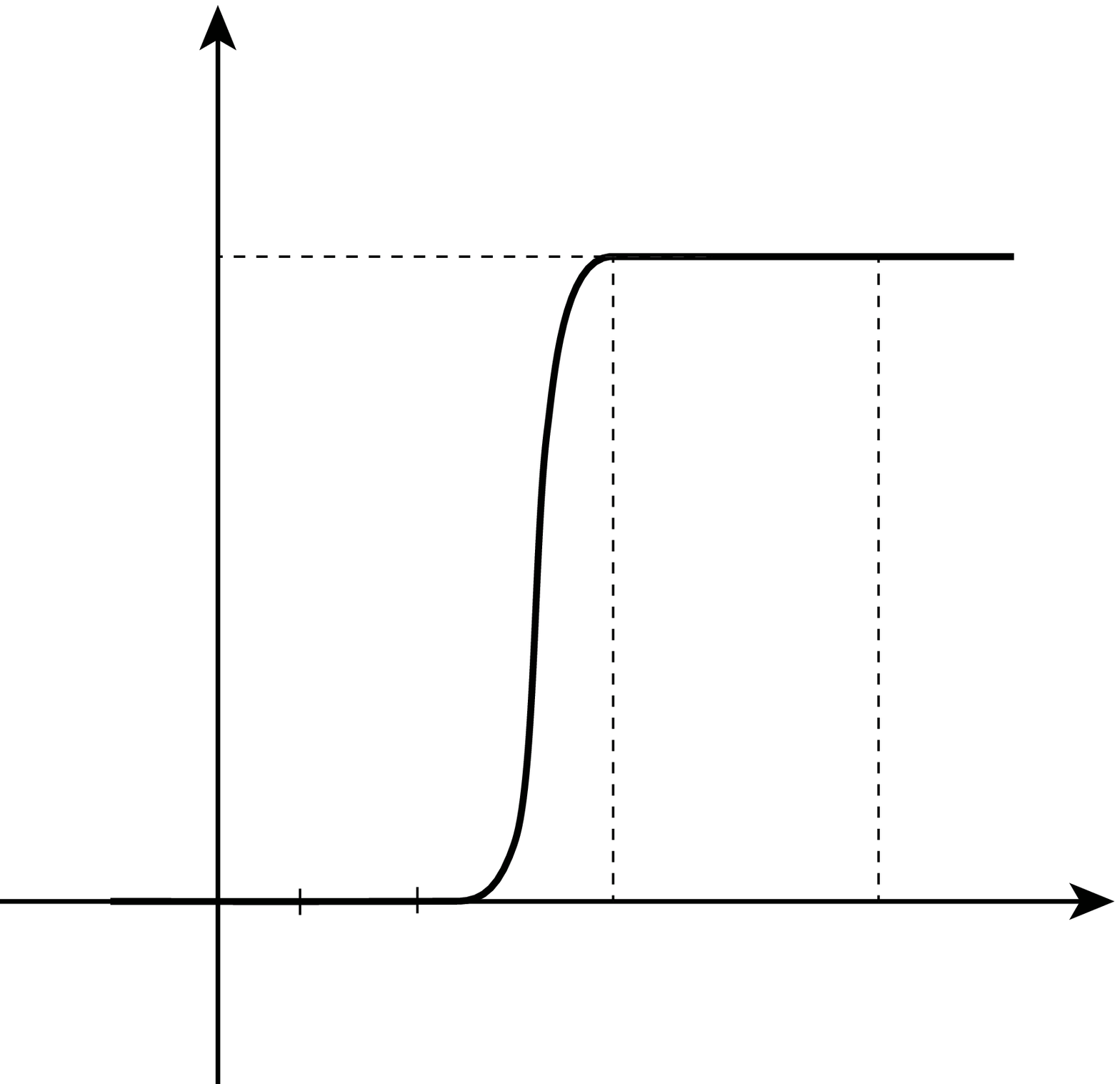}
	 \linethickness{3pt}
	\put(155,20){$s$} 
	\put(7,135){$u(s)$}
  	\put(19,15){$O$}
	\put(38,15){$\epsilon$}
	\put(52,15){$\epsilon'$}
	\put(78,15){$\epsilon''$}
	\put(116,15){$1$}
	\put(21,108){$1$}
	\end{overpic}
	\caption{}
	\label{fig: u}
\end{minipage}
\end{tabular}
\end{figure}

Recall how to define a symplectic form on $P(\D(\delta), \C)$. 
The $2$-form defined by (\ref{eqn: symplectic form}) descends to the symplectic form $\Omega_{0}$ on $P(\D(\delta), \C)$. 
Set 
$$
	\nu_{\epsilon}(H_{0}) = \{ [x, z_{1}, z_{2}] \in P(\D(\delta), \C) \, | \, |z_{1}|^2+|z_{2}|^2 \leq \epsilon \}
	\subset P(\D(\delta), \C).
$$
Regarding $\nu_{\epsilon}(H_{0})$ as the total space of the bundle $\nu_{\epsilon}(H_{0}) \rightarrow H$, 
we see that $\Omega_{0}$ does not agree with the standard symplectic form on each fiber of this bundle. 
To obtain a suitable symplectic form, we will deform the symplectic form $\Omega_{0}$.
Pick a smooth function $u:\R \rightarrow \R$ such that $u(s) \equiv 0$ for  $s \leq \epsilon'$, 
$u(s) \equiv 1$ for  $s \geq \epsilon''$ and 
$u'(s) \geq 0$ at any point $s$ for some small $\epsilon', \epsilon''>0$ with 
$\epsilon<\epsilon'<\epsilon''<1$ (Figure \ref{fig: u}). 
Set 
\begin{equation}\label{function}
	f(r_{1},r_{2})=u(r_{1}^2+r_{2}^2)
\end{equation}
for $(r_{1},r_{2}) \in [0,\delta) \times [0,\infty)$.
Define $\tilde{\Omega}_{1} \in \Omega^{2}(P\times \D(\delta) \times \C)$ by  
$$
	\tilde{\Omega}_{1}= d((1+r_{2}^{2})(d\theta_{2} + \alpha)) + d((1+f(r_{1}, r_{2})r_{2}^{2})r_{1}^{2}(d\theta_{1}-\alpha)). 
$$
It is easy to see that $\tilde{\Omega}_{1}$ descends to a $2$-form $\Omega_{1}$ on $P(\D(\delta), \C)$. 
Note that $\Omega_{1} = \Omega_{0}$ outside $\nu_{\epsilon''}(H_{0})$.

\begin{lemma}
Let $\Omega_{0}$ and $\Omega_{1}$ be $2$-forms on $P(\D(\delta), \C)$ defined above.  
Then, $\Omega_{1}$ is a symplectic form on $P(\D(\delta), \C)$. 
Moreover, there exists a symplectomorphism between $(P(\D(\delta), \C), \Omega_{0})$ and 
$(P(\D(\delta), \C), \Omega_{1})$ supported in $\nu_{\epsilon''}(H_{0})$. 
\end{lemma}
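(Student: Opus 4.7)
My plan is to run a Moser-type argument. By directly comparing the defining expressions,
$$\tilde\Omega_1 - \tilde\Omega_0 = d\tilde\beta, \qquad \tilde\beta := (f(r_1,r_2) - 1)\, r_1^2 r_2^2\, (d\theta_1 - \alpha).$$
First I check that $\tilde\beta$ is $S^1$-invariant and that $\iota_{R_\alpha + \partial_{\theta_1} - \partial_{\theta_2}}\tilde\beta = 0$ (the two cancelling contributions are $\iota_{R_\alpha}\alpha = 1$ and $\iota_{\partial_{\theta_1}}d\theta_1 = 1$), so $\tilde\beta$ descends to a 1-form $\beta$ on $P(\D(\delta),\C)$ with $\Omega_1 - \Omega_0 = d\beta$. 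Because $f\equiv 1$ outside $\nu_{\epsilon''}(H_0)$, the primitive $\beta$ is supported inside $\nu_{\epsilon''}(H_0)$.

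The main step is to show that every form in the straight-line family
$$\Omega_t := (1-t)\Omega_0 + t\Omega_1 = d\bigl((1+r_2^2)(d\theta_2 + \alpha)\bigr) + d\bigl(h_t\, (d\theta_1 - \alpha)\bigr), \quad h_t := (1 + g_t r_2^2) r_1^2,$$
with $g_t(r_1,r_2) := (1-t) + tf(r_1,r_2)$, is symplectic. Using $d\alpha = p^*\omega_H$, I decompose $\Omega_t = A_t + B_1 + B_2^t$ with
$$A_t := (1 + r_2^2 - h_t)\, p^*\omega_H,\quad B_1 := 2r_2\, dr_2\wedge (d\theta_2 + \alpha),\quad B_2^t := dh_t \wedge (d\theta_1 - \alpha).$$
Since $p^*\omega_H$ is pulled back from the $2(n-1)$-dimensional base $H$ and $B_1\wedge B_1 = B_2^t \wedge B_2^t = 0$, the multinomial expansion collapses to $\Omega_t^{n+1} = (n+1)n\, A_t^{n-1} \wedge B_1 \wedge B_2^t$. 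Using $\partial_{r_1} h_t = 2r_1\bigl((1 + g_t r_2^2) + r_1^2 r_2^2\, g_t'\bigr)$ with $g_t' = t\, u'(r_1^2 + r_2^2) \geq 0$, and switching to Cartesian coordinates $(x_i,y_i)$ on the two $\C$-factors (so that $r_i\, dr_i\wedge d\theta_i = dx_i\wedge dy_i$), one identifies $\Omega_t^{n+1}$ with a smooth multiple of $(p^*\omega_H)^{n-1}$ wedged with the Euclidean volume on $\C^2$, whose coefficient is
$$(1 + r_2^2 - h_t)^{n-1} \bigl((1 + g_t r_2^2) + r_1^2 r_2^2\, g_t'\bigr).$$
The second factor is bounded below by $1$, and for the first I use $g_t \leq 1$ and $r_1 \leq \delta < 1$ to obtain $1 + r_2^2 - h_t \geq (1+r_2^2)(1 - r_1^2) > 0$. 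Hence each $\Omega_t$ is symplectic.

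Once non-degeneracy is established, define the Moser vector field $X_t$ by $\iota_{X_t}\Omega_t = -\beta$; its support is contained in that of $\beta$, hence in $\nu_{\epsilon''}(H_0)$. The time-$1$ flow $\phi$ of $X_t$ is a well-defined diffeomorphism of $P(\D(\delta),\C)$ satisfying $\phi^*\Omega_1 = \Omega_0$ and $\phi = \mathrm{id}$ outside $\nu_{\epsilon''}(H_0)$.

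The principal obstacle is the non-degeneracy verification: the factor $r_1 r_2$ produced by $dh_t \wedge B_1$ vanishes along the axes $\{r_1 = 0\}\cup\{r_2 = 0\}$, which at first glance looks like a failure of non-degeneracy. The resolution is purely a matter of coordinates, since $r_i\, dr_i\wedge d\theta_i = dx_i\wedge dy_i$ is the smooth Euclidean area form; after that conversion, only the two positivity inequalities displayed above need to be checked, and both are uniform in $t \in [0,1]$.
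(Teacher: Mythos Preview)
Your proof is correct and follows essentially the same route as the paper: compute the difference $\Omega_{1}-\Omega_{0}$ as $d$ of a compactly supported primitive, verify non-degeneracy by expanding the top exterior power (the paper's coefficient $C(r_{1},r_{2})$ is exactly your expression at $t=1$), and then run Moser. Your treatment is in fact more complete than the paper's, since you verify non-degeneracy along the whole straight-line family $\Omega_{t}$ (which Moser requires) and you explicitly handle the descent of the primitive and the polar-coordinate artifact along $\{r_{1}r_{2}=0\}$, points the paper leaves implicit under the phrase ``standard Moser type argument''.
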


\begin{proof}
To show the first claim, 
it suffices to show the kernel of $(\tilde{\Omega}_{1})^{n+1}$ is spanned by $\del_{\theta_{1}}-\del_{\theta_{2}}+R_{\alpha}$. 
A direct computation  show that 
$$
	(\tilde{\Omega}_{1})^{n+1}= C(r_{1},r_{2}) 
	dr_{1}^{2} \wedge (d\theta_{1} - \alpha) \wedge dr_{2}^{2} \wedge (d\theta_{2} + \alpha) \wedge (d\alpha)^{n-1}, 
$$
where $C(r_{1},r_{2}) = n(n+1)(1-r_{1}^2+r_{2}^{2}(1-fr_{1}^2))^{n-1}(r_{1}^{2}r_{2}^{2}u'+fr_{2}^{2}+1) >0$.
This shows the claim. 

Since 
$$
	\Omega_{0} - \Omega_{1} = d((1-f)r_{1}^{2} r_{2}^{2}(d\theta_{1} - \alpha))=
	\begin{cases}
		d(r_{1}^{2} r_{2}^{2}(d\theta_{1} - \alpha)) & (r_{1}^2+r_{2}^2<\epsilon'), \\
		0 & (r_{1}^{2}+r_{2}^{2}>\epsilon''),
	\end{cases}
$$
the difference is an exact form and supported in $\nu_{\epsilon''}(H_{0})$. 
Thus, a standard Moser type argument shows the second claim (see \cite[Theorem 3.2.4]{MS3} for example). 
\end{proof}

\begin{lemma}\label{lemma: complex structure}
Let $\Omega_{1}$ be the symplectic form on $P(\D(\delta), \C)$ defined above. 
Then, there exists an almost complex structure $J$ on $P(\D(\delta), \C)$ compatible with $\Omega_{1}$ such that 
$H_{0}$ is an almost complex submanifold of $(\nu_{\epsilon}(H_{0}), J)$, and 
$\Omega_{1}$ is a normally K\"{a}hler near $H_{0}$.
\end{lemma}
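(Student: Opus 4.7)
I construct $J$ in two stages. First, on a tubular neighborhood of $H_0$ contained in $\nu_\epsilon(H_0)$, where the cut-off function $u$ vanishes and hence $f \equiv 0$, I define $J$ explicitly using the bundle structure of $P(\D(\delta), \C) \to H$, making the normally K\"{a}hler condition transparent. Then I extend $J$ to all of $P(\D(\delta), \C)$ as an $\Omega_1$-compatible almost complex structure using the standard fact that the space of such structures is nonempty and contractible.

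For the local construction, I fix a $\omega_H$-compatible almost complex structure $J_H$ on $H$. The $S^1$-action on $\D(\delta)\times\C$ defining $P(\D(\delta),\C)$ has weights $(1,-1)$ and hence preserves the standard complex structure $J_0$ on $\C^2 = \D(\delta) \times \C$. Consequently $J_0$ descends to a well-defined almost complex structure on the vertical tangent bundle of $P(\D(\delta),\C) \to H$. The connection $\alpha$ on $p: P \to H$ induces a horizontal distribution in $TP(\D(\delta),\C)$; at each point this distribution projects isomorphically onto $TH$, and I pull $J_H$ back along this projection. The direct sum of the two pieces defines $J$ on a neighborhood of $H_0$.

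To verify the required properties on this neighborhood, note that $f \equiv 0$ makes the upstairs form simplify to
\[
\tilde{\Omega}_1 = 2r_1 dr_1 \wedge (d\theta_1 - \alpha) + 2r_2 dr_2 \wedge (d\theta_2 + \alpha) + (1 - r_1^2 + r_2^2)\, d\alpha.
\]
A direct computation shows that $\Omega_1$ is block diagonal with respect to the horizontal--vertical splitting: on a fiber it restricts to $2(dx_1 \wedge dy_1 + dx_2 \wedge dy_2)$, the standard $\C^2$ K\"{a}hler form up to a factor of two, compatible with $J_0$; on the horizontal distribution it becomes $(1 - r_1^2 + r_2^2)\, p^*\omega_H$, positive because $r_1 < \delta < 1$ and hence compatible with the horizontal lift of $J_H$; and the mixed terms vanish because $\alpha$ annihilates horizontal vectors while $d\alpha$ vanishes when paired with a vertical one. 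This yields compatibility of $J$ with $\Omega_1$, invariance of $TH_0$ under $J$ at points of $H_0$, and K\"{a}hlerness of each fiber of $P(\D(\delta),\C) \to H$ intersected with $\nu_\epsilon(H_0)$, serving as a normal slice. To conclude, I extend $J$ to all of $P(\D(\delta),\C)$ by choosing an $\Omega_1$-compatible almost complex structure on the complement of a smaller tubular neighborhood of $H_0$ and interpolating via a partition of unity, using contractibility of the space of compatible almost complex structures along the overlap. The principal obstacle is the block-diagonality computation above; once this is in place, the remaining steps reduce to standard arguments about symplectic vector bundles.
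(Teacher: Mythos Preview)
Your argument is correct and arrives at the same almost complex structure as the paper, but the route is organized a bit differently. The paper first produces an $S^1$-invariant Riemannian metric on $P\times\D(\delta)\times\C$ (namely $g_\alpha+g_1+g_2$, with $g_\alpha$ built from $d\alpha$ and $\alpha$), pushes it down to the quotient, and then obtains $J$ by polar decomposition of $\Omega_1$ against this metric; compatibility is then automatic, and on each normal slice $D_x$ the polar decomposition of the standard metric and the standard symplectic form visibly returns the standard complex structure. You instead write down $J$ directly as the block-diagonal sum of $J_0$ on the fibers and the horizontal lift of $J_H$, and then verify compatibility by checking that $\Omega_1$ is block diagonal for the horizontal/vertical splitting. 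Your approach has the virtue of making the normally K\"ahler property completely transparent and of forcing you to write down the key identity $\tilde\Omega_1 = 2r_1\,dr_1\wedge(d\theta_1-\alpha)+2r_2\,dr_2\wedge(d\theta_2+\alpha)+(1-r_1^2+r_2^2)\,d\alpha$ on $\nu_\epsilon(H_0)$; the paper's approach buys compatibility for free via polar decomposition and avoids the block-diagonality check. Your global extension step (interpolating compatible almost complex structures) is also a fine substitute for the paper's observation that the descended metric is defined on all of $P(\D(\delta),\C)$, so polar decomposition already gives a global $J$.
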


\begin{proof}
We first construct a compatible almost complex structure defined in $\nu_{\epsilon}(H_{0})$. 
Let $J_{\alpha}$ be a compatible almost complex structure on $\ker(\alpha) \subset TP$ and 
$g_{\alpha}$ a Riemannian metric on $P$ given by 
$$
	(g_{\alpha})_{x}(u,v) =d\alpha_{x}(u,J_{\alpha}v)+\alpha_{x}(u)\alpha_{x}(v) 
$$
for $x\in P$ and $u,v \in T_{x}P$. 
Since $\alpha$ and $d\alpha$ are $S^1$-invariant, 
so is $g_{\alpha}$. 
Let $g_{1}$ and $g_{2}$ be the standard Riemannian metrics on $\D(\delta)$ and $\C$, respectively. 
The Riemannian metric $g_{\alpha}+g_{1}+g_{2}$ on $P \times \D(\delta) \times \C$ is $S^1$-invariant, 
and hence this induces a metric $g$ on the quotient space $P(\D(\delta), \C)$. 
The polar decomposition of $(P(\D(\delta), \C), \Omega_{1})$ with respect to the metric $g$ yields 
an almost complex structure $J$ on $\nu_{\epsilon}(H_{0})$. 
By construction, $J$ is compatible with $\Omega_{1}$, and $H_{0}$ is an almost complex submanifold of 
$(P(\D(\delta), \C), J)$.

Next, we see that $\Omega_{1}$ is normally K\"{a}hler near $H_{0}$. 
The tubular neighborhood $\nu_{\epsilon}(H_{0})$ is foliated by normal slices 
$ \{ D_{x} \}_{[x,0,0] \in H_{0}}$, where 
$$
	D_{x}= \{ [x,z_{1},z_{2}]\in P(\D(\delta), \C)\mid |z_{1}|^2+|z_{2}|^2 <\epsilon \}.
$$
By definition, the symplectic form $\Omega_{1}$ coincides with $r_{1}dr_{1} \wedge d\theta_{1} + r_{2}dr_{2} \wedge d\theta_{2}$ 
on each $D_{x}$, 
which is the standard symplectic form on the disk. 
Moreover, by construction the almost complex structure $J|_{TD_{x}}$ is the standard complex structure, in particular integrable and 
compatible with the symplectic form $r_{1}dr_{1} \wedge d\theta_{1} + r_{2}dr_{2} \wedge d\theta_{2}$. 
This completes the proof.
\end{proof}

Define a symplectic structure $\Omega$ on $L$ to be   
$$
	\Omega|_{V\times \C} = \Omega_{V} \ \text{and} \ \Omega|_{P(\D(\delta), \C)} = \Omega_{1}.
$$

\begin{proof}[Proof of Theorem \ref{theorem}]
We show that the tuple $(L, \pi, \Omega, J, j_{0})$ is a symplectic Lefschetz-Bott fibration. 
We choose $\nu_{\epsilon}(H_{0})$ and $\C$ as neighborhoods of $\Crit(\pi)$ and $\Critv(\pi)$, respectively. 
Since $\Crit(\pi) = H_{0}$, the tuple satisfies the condition (\ref{condition: submanifold}) of Definition \ref{def: Lefschetz-Bott}.
By the construction of $J$ in Lemma \ref{lemma: complex structure}, it is easy to see that 
the tuple meets the condition (\ref{condition: non-degeneracy}). 
Moreover, a straightforward computation shows that the tuple satisfies (\ref{condition: fibers}). 
Thus, it suffices to show the $(J,j_{0})$-holomorphicity of $\pi$ and the non-degeneracy of its holomorphic normal Hessian at each point of $H_{0}$.

On each normal slice $D_{x} \subset \nu_{\epsilon}(H_{0})$, we have $\mu(r_{1})=r_{1}$, and the map $\pi$ can be written as 
$$
	w=\pi([x,z_{1}, z_{2}])=z_{1}z_{2}
$$ 
by using complex coordinates $(z_{1},z_{2})$ of $D_{x}$ and $w$ of $\C$. 
Note that these are compatible with $J$ and $j_{0}$, respectively. 
Thus, $\pi|_{D_{x}}$ is $(J, j_{0})$-holomorphic, and in particular so is the whole $\pi$ 
because $J$ preserves $TD_{x}$ and its orthogonal complement with respect to the metric $g$ constructed above. 
Furthermore, 
the holomorphic normal Hessian of $\pi$ at $[x,0,0] \in H_{0}$ is given by 
$$
  \left(
    \begin{array}{cc}
       0 & 1 \\
      1 & 0
    \end{array}
  \right),
$$
which is non-degenerate. 
This finishes the proof. 
\end{proof}

\subsection{Fibrations with compact fibers}\label{section: compact fibers}
In the previous subsection, we saw that the line bundle $L$ admits a symplectic Lefschetz-Bott fibration $\pi:L\rightarrow \C$. 
Here, cutting the vertical and horizontal directions of the fibration, 
we show that a disk bundle associated to $L$ admits a symplectic Lefschetz-Bott fibration over the unit closed disk $\D$ 
with compact fibers.  

First, we construct the total space of our new fibration. 
One can define its vertical boundary by the preimage $\pi^{-1}(\del \D)$ of the boundary of $\D$. 
It suffices to define its horizontal boundary. 
To do this, 
we will find a function $h: P(\D(\delta), \C) \rightarrow \R$ satisfying that $dh(v)=0$ for any 
$v \in \ker (D_{[x,z_{1}, z_{2}]}\pi_{\nu})^{\Omega}$. 
Note that $V\times \C \subset L$ does not matter to the horizontal boundary. 
A straightforward computation shows that 
$$
\ker(D_{{[x,z_{1}, z_{2}]}}\pi_{\nu}) \cong T_{p_{H}(x)}H \oplus \R\langle \mu\del_{r_{1}}-\mu'r_{2}\del_{r_{2}}, 
\del_{\theta_{1}} - \del_{\theta_{2}} \rangle, 
$$
where $z_{j}=r_{j}e^{2\pi i \theta_{j}}$ and $p_{H}: (P, \alpha) \rightarrow (H, \omega_{H})$ is the Boothby-Wang bundle.
It turns out that 
its symplectic complement $(\ker(D_{{[x,z_{1},z_{2}]}}\pi_{\nu}))^{ \Omega}$ 
is spanned by 
$$
	2\mu'r_{2}^{2}\del_{\theta_{1}}+ \left( \mu\frac{\del 
	(r_{1}^{2}+ f r_{1}^{2}r_{2}^{2})}{\del r_{1}} -\mu'r_{2}\frac{\del(f r_{1}^{2} r_{2}^{2})}{\del r_{2}} \right)\del_{\theta_{2}},
$$
$$
	\frac{\del(r_{2}^{2}-f r_{1}^{2}r_{2}^{2})}{\del r_{2}} \del_{r_{1}}+
	\frac{\del(r_{1}^{2}+f r_{1}^{2}r_{2}^{2})}{\del r_{1}} \del_{r_{2}},  
$$
where $f$ is the function (\ref{function}). 
Now let us define $h: P(\D(\delta), \C) \rightarrow \R$ as a desired function by  
$$
	h([x,(r_{1},\theta_{1}), (r_{2}, \theta_{2})]) =
	-r_{1}^{2}+r_{2}^{2}-f(r_{1},r_{2}) r_{1}^{2}r_{2}^{2}. 
$$
It is actually a function of $r_{1}, r_{2}$ only, and 
we often write briefly $h(r_{1}, r_{2})$ for $h([x,(r_{1},\theta_{1}), (r_{2}, \theta_{2})])$. 
We can see that there is a unique point $(r_{1}, r_{2}) \in \R_{\geq 0} \times \R_{\geq 0}$ such that 
$h(r_{1}, r_{2})=c^2$ and $\rho(r_{1}) r_{2}=s$ for a sufficiently large $c>0$ and $s \leq 1$. 
Set 
$$
	P_{c}(\D(\delta), \C) =  \{ [x,(r_{1}, \theta_{1}), (r_{2}, \theta_{2})] \in P(\D(\delta), \C)\mid \rho(r_{1})r_{2} \leq 1, 
	h(r_{1}, r_{2}) \leq c^2 \},
$$
$$
	E_{c}= (V \times \D) \cup P_{c}(\D(\delta), \C).
$$
The glued manifold $E_{c}$ is diffeomorphic to a disk bundle over $M$ associated to $\Pi:L \rightarrow M$ by construction. 
We also note that the fibers of the restriction $\pi_{\nu}|_{P_{c}(\D(\delta), \C)}$ are compact, and in particular its regular fibers are diffeomorphic to $[0,1] \times P$. 
Thus, the map $\pi: L \rightarrow \C$ induces the desired map $\pi_{c}=\pi|_{E_{c}}: E_{c} \rightarrow \D$ with compact fibers.

\begin{figure}[t]
	\vspace{20pt}
	\centering
	\begin{overpic}[width=200pt,clip]{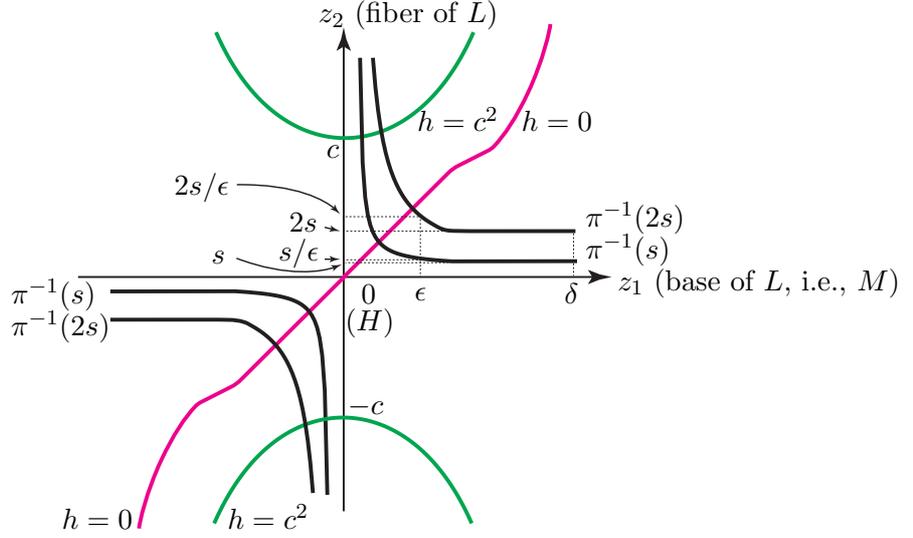}
	 \linethickness{3pt}
	\put(202,90){$z_{1}$ (base of $L$, i.e., $M$)} 
	\put(90,191){$z_{2}$ (fiber of $L$)}
  	\put(106,85){$0$}
	\put(100,75){($H$)}
	\put(126,86){$\epsilon$}
	\put(182,85){$\delta$}
	\put(-25,85){$\pi^{-1}(s)$}
	\put(-25,73){$\pi^{-1}(2s)$}
	\put(190,102){$\pi^{-1}(s)$}
	\put(190,114){$\pi^{-1}(2s)$}
	\put(127,150){$h=c^2$}
	\put(56,0){$h=c^2$}
	\put(93,140){$c$}
	\put(101,43){$-c$}
	\put(166,150){$h=0$}
	\put(-6,0){$h=0$}
	\put(79,112){$2s$}
	\put(75,101){$s/\epsilon$}
	\put(36,126){$2s/\epsilon$}
	\put(50,100){$s$}
	\end{overpic}
	\caption{Schematic picture of $\pi_{\nu,c}$. The thick black curves represent the fibers over $s$ and $2s$. 
	The red one represents $\Sigma$, and the green ones represent 
	part of the horizontal boundary of $E_{c}$.}
	\label{fig: graph of LB}
\end{figure}

This $\pi_{c}$ satisfies all conditions but (\ref{condition: horizontal triviality}) 
in Definition \ref{def: Lefschetz-Bott}. 
Thus, following the argument of \cite[Lemma 1.10]{Sei}, we next deform the symplectic structure $\Omega$ on $E_{c}$ so that 
it meets the remaining condition.
%analyzing the monodromy of $\pi_{c}$ along a loop around the origin. 
%Hereafter, we assume that $c$ is sufficiently large for simplicity. 
More precisely speaking, 
we will analyze the monodromy of $\pi_{c}$ along a loop in $\D$ by a direct computation and 
deform $\Omega$ so that the monodromy is compactly supported, which implies the horizontal triviality of $\pi_{c}$.

Since $\pi_{V}$ is a trivial fibration, we only need to consider $\pi_{\nu,c}:=\pi_{\nu}|_{E_{c}}$. 
Set $\Sigma = h^{-1}(0)$, 
which can be also written as 
$$
	\Sigma= H_{0} \cup (\cup_{z\in \D \setminus \{0\}} \{ p \in \pi_{c}^{-1}(z) \mid \lim_{t_{0}\rightarrow 1} \Gamma_{\gamma_{z}|_{[0,t_{0}]}}(p) \in H_{0} \}),
$$
where $H_{0}=\{ [x,z_{1},z_{2}] \in P_{c}(\D(\delta), \C)\mid z_{1}=z_{2}=0\} \cong H$ and 
$\Gamma_{\gamma_{z}|_{[0,t_{0}]}}$ denotes the parallel transport along the restriction of the path $\gamma_{z}:[0,1] \rightarrow \D$, 
$\gamma_{z}(t)=(1-t)z$ (cf. Figure \ref{fig: graph of LB}). 
Since $\Crit(\pi_{c})=H_{0}$ and $h$ is constant horizontally, 
one can trivialize $P_{c}(\D(\delta), \C) \setminus \Sigma$ symplectically via parallel transport in radial direction. 
Fix the identification $\psi:P \times_{S^1}(([-\delta^2, 0) \cup (0, c^2]) \times S^1) \rightarrow \pi^{-1}_{\nu,c}(0) \setminus \Sigma$ 
given by 
$$
	\psi:	[x,(t,\theta)] \mapsto 
	\begin{cases}	
		[x,0,(\sqrt{t}, -\theta)] & (t>0), \\
		[x, (\sqrt{-t}, \theta),0] & (t<0).
	\end{cases}
$$
%\begin{eqnarray*}
%	\psi_{+}: P \times_{\bar{\rho}} (\R_{>0} \times S^{1}) \rightarrow P_{+}, \ [x,t,\theta] \mapsto [x,0,(\sqrt{t}, -\theta)], \\ 
%	\psi_{-}: P \times_{\bar{\rho}} (\R_{<0} \times S^{1}) \rightarrow P_{-}, \ [x,t,\theta] \mapsto [x, (\sqrt{-t}, \theta),0], 
%\end{eqnarray*}
%where 
%$P_{+}= \{ [x,0, (r_{2}, \theta_{2})] \in P_c(\D(\delta), \C)\, | \, r_{2}>0 \}$ and 
% $P_{-}= \{ [x,(r_{1}, \theta_{1}), 0] \in P_c(\D(\delta), \C)\, | \, r_{1}>0 \}$. 
Note that $([-\delta^2, 0) \cup (0, c^2]) \times S^1$ is a subset of $T^{*}S^1 \setminus S_{0} \cong (\R \setminus \{ 0\}) \times S^{1}$.
We also remark that $\pi_{\nu,c}^{-1}(0) \setminus \Sigma$ consists of the following two parts: 
$$
	\{ [x,0, (r_{2}, \theta_{2})] \in P_c(\D(\delta), \C)\, | \, 0< r_{2}\leq c \},\ 
	\{ [x,(r_{1}, \theta_{1}), 0] \in P_c(\D(\delta), \C)\, | \, 0< r_{1}\leq \delta \}.
$$
Lifting the ray $\gamma_{\theta}(s) = se^{2\pi i \theta}$ with respect to the symplectic connection, 
we obtain the trivialization  $\Psi$ of ${P_{c}(\D(\delta), \C) \setminus \Sigma}$ as follows:  
\begin{eqnarray*}
	\Psi: (P \times_{S^{1}}(([-\delta^2, 0) \cup (0, c^2 ]) \times S^1)) \times \C \rightarrow P_{c}(\D(\delta), \C) \setminus \Sigma, \\
	\Psi([x,(t,\theta)], (s, \varphi)) = 
	\begin{cases} 
	[x, (r_{1}(s,t), \varphi+\theta), (r_{2}(s,t), -\theta)] & (t>0), \\ 
	[x, (r_{1}(s,t), \theta), (r_{2}(s,t), \varphi-\theta)] & (t<0),  
	\end{cases}
\end{eqnarray*}
where by definition $(r_{1}(s,t), r_{2}(s,t))$ is the solution of the following initial value problem derived from lifting 
$\gamma_{\theta}$: 
\begin{eqnarray}\label{ODE}
	& \frac{dr_{1}}{ds}=\frac{\frac{\del h}{\del r_{2}}}{r_{2}\frac{\del h}{\del r_{2}} -\mu(r_{1})\frac{\del h}{\del r_{1}}}, \ \ 
	 \frac{dr_{2}}{ds}=\frac{-\frac{\del h}{\del r_{1}}}{r_{2}\frac{\del h}{\del r_{2}} -\mu(r_{1})\frac{\del h}{\del r_{1}}}, \\
	& (r_{1}(0,t),r_{2}(0,t))= 
	 \begin{cases}
		(0,\sqrt{t}) & (t>0), \\
		(\sqrt{-t},0) & (t<0).
	\end{cases}
	\nonumber
\end{eqnarray}
If we restrict $\Psi$ to the fiber over a point $s>0$, it extends to a diffeomorphism 
$\psi_{s}$ between 
$P\times_{S^1} ([-\delta^2, c^2] \times S^1)$ and the whole fiber $\pi_{\nu,c}^{-1}(s)$.
Recall from the previous subsection that the symplectic structure $\Omega|_{P_{c}(\D(\delta), \C)}$ is given by 
$$ 
	d((1+r_{2}^{2})(d\theta_{2}+\alpha)+(r_{1}^{2}+f(r_{1}, r_{2})r_{1}^{2}r_{2}^{2})(d\theta_{1}-\alpha)).
$$	 
Let us denote this primitive $1$-form on $P_{c}(\D(\delta), \C) \setminus \Sigma$ by $\lambda_{\nu}$. 
Note that $\Omega$ is exact on $P_{c}(\D(\delta), \C) \setminus \Sigma$ but never on $P_{c}(\D(\delta), \C)$.
A straightforward computation yields  
\begin{eqnarray*}
	& \Psi^{*}(\lambda_{\nu})= 
	(1+|t|)(\alpha-d\theta)-\tilde{R}_{s}(t) \wedge d\theta, \\
	& \tilde{R}_{s}(t)=
	\begin{cases}
	-r_{2}^2(s,t)+t & (t>0), \\
	-r_{2}^{2}(s,t)-1 & (t<0). 
	\end{cases}
\end{eqnarray*}
Let $s_{0}>0$ be sufficiently small and 
$\ell_{s_{0}}:[0,1] \rightarrow \D$ the loop defined by $\ell_{s_{0}}(t)=s_{0}e^{2\pi it}$. 
The monodromy of $\pi_{c}$ along this loop is the conjugation $\psi_{s_{0}} \circ \tilde{\tau} \circ \psi_{s_{0}}^{-1}$ of the symplectic automorphism $\tilde{\tau}$ of 
$P \times_{S^1}(([-\delta^2, c^2\, ]) \times S^1)$ defined by 
$$
	\tilde{\tau}:[x,(t,\theta)] \mapsto [x,(t,\theta+\tilde{R}'_{s_{0}}(t))].
$$

Observe the behavior of $\tilde{R}_{s_{0}}'(t)$. 
If $\lim_{t \rightarrow \pm 0} \tilde{R}_{s_{0}}'(t)= \pm 1/2$ and $\tilde{R}_{s_{0}}'(t)=0$ for $t$ near $-\delta^2$ and $c^2$, 
then $\tilde{\tau}$ would be a fibered Dehn twist. 
%In fact, however, it is not a fibered Dehn twist. 
First, consider $\tilde{R}_{s_{0}}'(t)$ near $t=0$. 
Since $s_{0}$ is sufficiently small, 
we may assume by definition that $f(r_{1},r_{2})=0$, $h(r_{1},r_{2})= -r_{1}^{2}+r_{2}^{2}$ and $\mu(r_{1})=r_{1}$ for small $s$ and $|t|$.
This reduces the equations (\ref{ODE}) to 
$$
	\frac{dr_{1}}{ds}=\frac{r_{2}}{r_{1}^{2}+r_{2}^{2}}, \ 
	\frac{dr_{2}}{ds}=\frac{r_{1}}{r_{1}^{2}+r_{2}^{2}}.
$$
Solving the initial value problem, we have 
\begin{eqnarray*}
	(r_{1}(s,t), r_{2}(s,t))= 
	\begin{cases}
	\left( \frac{u(s,t)^{1/2}+u(s,t)^{-1/2}}{2} \sqrt{t},   \frac{u(s,t)^{1/2}-u(s,t)^{-1/2}}{2}\sqrt{t}\, \right)& (t>0), \\
	\left( \frac{u(s,t)^{1/2}-u(s,t)^{-1/2}}{2} \sqrt{-t}, \frac{u(s,t)^{1/2}+u(s,t)^{-1/2}}{2}\sqrt{-t}\, \right) & (t<0),
	\end{cases}
\end{eqnarray*}
where $u(s,t)=2s|t|^{-1}+\sqrt{4s^2 |t|^{-2}+1}$.
It can be seen directly that 
$$\lim_{t \rightarrow + 0} \tilde{R}_{s_{0}}'(t) = 1/2, \ \ \lim_{t \rightarrow - 0} \tilde{R}_{s_{0}}'(t)=-1/2.$$ 
Next, let us look at $\tilde{R}'_{s_{0}}(t)$ for $t$ near $-\delta^2$. 
Since $r_{1}(s_{0},t)$ is close to $\delta$ in this case, we may assume that $\mu(r_{1}(s_{0},t))\equiv 1$. 
We have $s_{0}=\mu(r_{1}(s_{0},t))r_{2}(s_{0},t)= r_{2}(s_{0},t)$, and 
it follows that $\tilde{R}'_{s_{0}}(t) = 0$ for $t$ near $-\delta^2$.
What is left is to see $\tilde{R}_{s_{0}}'(t)$ for $t$ near $c^2$.
As the constant $c$ is sufficiently large, we may assume that $f(r_{1},r_{2})=1$ and $h(r_{1},r_{2})=-r_{1}^{2}+r_{2}^{2}-r_{1}^{2}r_{2}^{2}$.
Thus, the initial value problem (\ref{ODE}) is reduced to 
$$
	\frac{dr_{1}}{ds}=\frac{r_{2}(1-r_{1}^{2})}{r_{1}^{2}+r_{2}^{2}}, \ 
	\frac{dr_{2}}{ds}=\frac{r_{1}(1+r_{2}^{2})}{r_{1}^{2}+r_{2}^{2}},\ (r_{1}(0,t),r_{2}(0,t))=(0, \sqrt{t}).
$$
A direct computation using Mathematica gives the solution
\begin{eqnarray*}
	  & r_{1}(s,t)  & = \frac{\sqrt{-|t|-s^2+\sqrt{t^2+4s^2+2|t|s^2+s^4}}}
	{\sqrt{2}}, \\
	 & r_{2}(s,t)  & = \frac{\sqrt{-|t|+s^2-\sqrt{t^2+4s^2+2|t|s^2+s^4}}}
	{\sqrt{-2-|t|-s^2+\sqrt{t^2+4s^2+2|t|s^2+s^4}}}. 
\end{eqnarray*}
We see that $\tilde{R}_{s}'(t) \rightarrow 0$ as $t \rightarrow +\infty$, but $\tilde{R}_{s}'(t) > 0$ for any $t$. 
Therefore, this implies that the monodromy of $\pi_{c}$ along $\gamma_{\theta}$ is not compactly supported. 
%For future use,
%we deform the symplectic structure $\Omega$ and make the monodromy compactly supported. 

To obtain a compactly supported monodromy, 
choose a cutoff function $\sigma \in C^{\infty}(\R_{>0}, \R)$ such that $\sigma'(t)\geq 0$ for every $t$, 
$\sigma(t)=0$ for small $t$ and $\sigma(t)=1$ for $t$ close to $c^2$. 
Take a $1$-form $\gamma$ on $P_{c}(\D(\delta), \C)$ such that 
$$
	\Phi^{*}\gamma = \sigma(t)\tilde{R}_{s}(t)d\theta.
$$
Set $\lambda_{\nu,c}= \lambda_{\nu}+\gamma$ and $\Omega_{c}= \Omega+d\gamma$. 
It is easy to check that 
$\lambda_{\nu,c}$ agrees with $\lambda_{\nu}$ near the boundary of each fiber of $\pi_{\nu,c}$, and 
$\Omega_{c}$ coincides with $\Omega$ on each fiber of $\pi_{c}$.  
The monodromy of $\pi_{\nu, c}$ along the loop $\ell_{s_{0}}$ is given by $\psi_{s_{0}} \circ \tau \circ \psi_{s_{0}}^{-1}$, where $\tau$ is 
the extension of the automorphism given by 
$$
	[x,(t,\theta)] \mapsto [x,(t,\theta+\{(1-\sigma(t))\tilde{R}_{s_{0}}(t)\}')].
$$
By the choice of $\sigma$, this is compactly supported and a fibered Dehn twist along the boundary of $\pi_{c}^{-1}(s_{0})$. 
The monodromy along a loop in $\D$ is well-defined up to symplectic isotopy independently of 
the choice of the representative of the homotopy class of the loop. 
Moreover, if one changes the base point of the loop, 
this contributes to only a conjugation of the initial monodromy. 

Since $\sigma(t)=0$ for small $t$ and $\Omega_{c}|_{\nu_{\epsilon}(H_{0})}=\Omega_{\nu_{\epsilon}(H_{0})}$, 
the almost complex structure $J$ on $\nu_{\epsilon}(H_{0})$ constructed in Lemma \ref{lemma: complex structure} and $j_{0}$ on $\D$ satisfy 
the conditions of a symplectic Lefschetz-Bott fibration. 
As for the triviality of the horizontal boundary, 
in the pull-back $2$-form 
$$
	 \Psi^{*}(\lambda_{\nu,c})= 
	(1+|t|)(\alpha-d\theta)- (1-\sigma(t))\tilde{R}_{s}(t) \wedge d\theta,
$$
the second term on the right-hand side vanishes close to $P \times_{S^1}(\{-\delta^2 ,c^2\} \times S^1)$, 
which concludes that $\Psi$ provides a trivialization of near $\del_{h}E_{c}$.
Therefore, 
$(E_{c}, \pi_{c}, \Omega_{c}, J, j_{0})$ is a symplectic Lefschetz-Bott fibration over $\D$.

We summarize the above discussion as follows.

\begin{proposition}
The tuple $(E_{c}, \pi_{c}, \Omega_{c}, J,j_{0})$ constructed above is a symplectic Lefschetz-Bott fibration over $\D$. 
Its monodromy along a small loop centered at the origin is a fibered Dehn twist along the boundary of the reference fiber. 
\end{proposition}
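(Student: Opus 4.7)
The plan is to verify the axioms of Definition \ref{def: Lefschetz-Bott} for the tuple $(E_c, \pi_c, \Omega_c, J, j_0)$ and then identify the monodromy along a small loop by an explicit parallel-transport computation. Conditions (i)--(v) are inherited from the proof of Theorem \ref{theorem}: since $E_c \subset L$ is carved out by a function $h$ that is constant along the horizontal distribution, and since $\Omega_c$ differs from $\Omega$ only by an exact form $d\gamma$ supported away from $\nu_\epsilon(H_0)$, the critical locus $\Crit(\pi_c)=H_0$, the almost complex structure $J$, and the normally K\"ahler structure near $H_0$ are unchanged. For condition (vi) I would directly check that $dh$ annihilates the two generators of $(\ker D\pi_\nu)^{\Omega}$ computed above, so that horizontal lifts preserve level sets of $h$ and the boundary $h^{-1}(c^2)$ is symplectically horizontal.

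The substance of the argument is condition (vii) together with the identification of the monodromy. The approach is to trivialize $P_c(\D(\delta),\C)\setminus\Sigma$ by lifting the radial rays $\gamma_\theta(s)=se^{2\pi i\theta}$ via the symplectic connection, giving the explicit map $\Psi$. The functions $(r_1(s,t),r_2(s,t))$ satisfy the initial value problem derived from lifting $\gamma_\theta$, and I would solve this ODE in three regimes separately: near $t=0$, where the simplifications $f\equiv 0$ and $\mu(r_1)=r_1$ reduce the system to an explicitly solvable one; near $t=-\delta^2$, where $\mu(r_1)\equiv 1$ forces $r_2(s,t)=s$; and near $t=c^2$, where $f\equiv 1$ and $h=-r_1^2+r_2^2-r_1^2r_2^2$. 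The pull-back $\Psi^*\lambda_\nu$ then takes the form $(1+|t|)(\alpha-d\theta)-\tilde{R}_s(t)\,d\theta$, from which the monodromy along the loop $\ell_{s_0}$ reads $[x,(t,\theta)]\mapsto[x,(t,\theta+\tilde{R}'_{s_0}(t))]$.

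The hard part will be the behavior at the outer boundary $t\to c^2$: the explicit solution there shows $\tilde{R}'_s(t)>0$ for all $t$, so the raw monodromy is not compactly supported, and the horizontal boundary of $E_c$ is not yet trivialized. The resolution, following \cite[Lemma 1.10]{Sei}, is to choose a cutoff function $\sigma$ equal to $0$ for small $t$ and to $1$ near $t=c^2$, and to replace $\Omega$ by $\Omega_c=\Omega+d\gamma$ with $\Psi^*\gamma=\sigma(t)\tilde{R}_s(t)\,d\theta$. Since this modification is exact and vanishes on fibers, $\Omega_c$ remains symplectic and fiber-compatible; the new monodromy becomes $[x,(t,\theta)]\mapsto[x,(t,\theta+\{(1-\sigma(t))\tilde{R}_{s_0}(t)\}')]$, which is compactly supported in $[-\delta^2,c^2]\times S^1$ while still realizing the limits $\pm 1/2$ across $t=0$ computed from the near-zero regime.

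Finally, I would conclude the proof in two steps. First, the truncated monodromy matches the model fibered Dehn twist of Section \ref{section: FDT} on $P\times_{S^1}T^*S^1$, and its conjugation by $\psi_{s_0}$ is the fibered Dehn twist along $\partial\pi_c^{-1}(s_0)$; homotopy invariance and conjugation invariance of the monodromy class complete the identification for arbitrary small loops around the origin. Second, for condition (vii) note that near $\partial_h E_c$ the form $\Psi^*\lambda_{\nu,c}$ collapses to $(1+|t|)(\alpha-d\theta)$, which gives the required split product $\omega_f+\pi^*\omega_b$ via $\Psi$. Combining these with conditions (i)--(vi) already verified shows that $(E_c,\pi_c,\Omega_c,J,j_0)$ is a symplectic Lefschetz-Bott fibration over $\D$ with the stated monodromy.
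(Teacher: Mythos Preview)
Your proposal is correct and follows essentially the same approach as the paper: the paper's proof is precisely the long discussion preceding the proposition (which is stated as a summary), carrying out the radial trivialization $\Psi$, the three-regime analysis of the ODE (near $t=0$, $t=-\delta^2$, $t=c^2$), the observation that the raw monodromy fails to be compactly supported at the outer boundary, and the cutoff modification $\Omega_c=\Omega+d\gamma$ following \cite[Lemma 1.10]{Sei} to force horizontal triviality and identify the monodromy as a fibered Dehn twist. Your outline matches this in structure and detail.
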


\subsection{Examples}\label{section: examples} 

%In this section, we give three examples related to Lefschetz-Bott fibrations. 
%In Section \ref{section: hypersurface} and \ref{section: cyclic}, we deal with K\"{a}hler manifolds. 
%In his paper \cite{Biran}, Biran introduced the notion of a \textit{polarized K\"{a}hler manifold}, 
%which is a tuple $\mathcal{P} = (M,\Omega,J; \Sigma)$ that consists of a K\"{a}hler manifold $(M,\omega,J)$ 
%with $[\Omega/2\pi]\in H^{2}(M;\Z)$ and  a smooth and reduced complex hypersurface $\Sigma \subset M$ 
%whose homology class $[\Sigma]$
% represents the Poincar\'{e} dual to $k[\Omega] \in H^2(M)$ for some $k \in \N$. 
% The number $k$ will be called the degree of the polarization $\mathcal{P}$. 
% In this paper, we only consider the case $k=1$.

\begin{example}[Hypersurface singularity]\label{example: hypersurface}

%\subsection{Hypersurface singularity}\label{section: hypersurface}

%In the following two subsections, we consider K\"{a}hler manifolds. 
%A \textit{smoothly polarized K\"{a}hler manifold} $\mathcal{P} = (M,\omega,J; H)$ is 
%a K\"{a}hler manifold $(M,\omega,J)$ 
%with $[\omega/2\pi]\in H^{2}(M;\Z)$ endowed with 
%a smooth and reduced complex hypersurface $H \subset M$ 
%whose homology class $[H]$
% represents the Poincar\'{e} dual to $k[\omega/2\pi] \in H^2(M; \Z)$ for some $k \in \N$ (see \cite{Biran}). 
% In this paper, we only consider the case $k=1$.

Consider the projective hypersurface $M_{d} \subset \CP^{n+1}$  degree $d$ defined by 
$$
	M_{d}=\left\{ [z_{0}:\ldots:z_{n+1}]\in \CP^{n+1} \;\middle|\; \sum_{j=0}^{n+1}z_{j}^d=0\right\}. 
$$
Let us denote by $\omega_{FS}$ the Fubini-Study form on $\CP^{n+1}$. 
Here, we take this form so that the cohomology class $[\omega_{FS}/2\pi]$ is Poincar\'{e} dual to 
the homology class of a hyperplane $h$ in $\CP^{n+1}$. 
The pull-back $\omega_{d}$ of $\omega_{FS}$ by the canonical inclusion map $i: M_{d} \hookrightarrow \CP^{n+1}$ is a symplectic 
form on $M_{d}$. 
%For any (real) $2$-dimensional submanifold $\Sigma$ in $M_{d}$, 
%\begin{eqnarray*}
%	\frac{1}{2\pi}\int_{\Sigma} {\omega_{d}} & = & \frac{1}{2\pi}\int_{\Sigma} {i^{*}\omega_{FS}}
%	 =  \frac{1}{2\pi}\int_{i(\Sigma)} {\omega_{FS}}
%	 =  \iota(\Sigma) \cdot h 
%	 =  \Sigma \cdot i^{-1}(h)
%	 =   \int_{\Sigma} {PD(i^{-1}(h))}
%\end{eqnarray*}
%(see \cite[p.59]{GH}).
Since the preimage $i^{-1}(h)$ can be seen as the hypersurface in $M_{d}$ given by 
$$ 
	H_{d} = \{ [z_{0}:\ldots : z_{n+1}] \in M_{d} \, | \, z_{n+1}=0 \}, 
$$
we have $[\omega_{d}/2\pi]=PD([H_{d}])$. 
Thus, by Theorem \ref{theorem}, 
a line bundle $L_{d} \rightarrow (M_{d}, \omega_{d})$ with $c_{1}(L_{d})=-[\omega_{d}/2\pi]$ 
admits a symplectic Lefschetz-Bott 
fibration $\pi: L_{d} \rightarrow \C$.

The total space $L_{d}$ of this line bundle has a singular theoretical meaning.
Consider the affine hypersurface in $\C^{n+2}$ given by 
$$ 
	X_{d} = \left\{ (x_{0}, \ldots, x_{n+1}) \in \C^{n+2} \;\middle|\; \sum_{j=0}^{n+1}x_{j}^d=0\right\},  
$$
which has a unique singularity at the origin.
%its link $P_{d}^{2n+1}=X_{d} \cap S^{2n+3}$ is the total space of the Boothby-Wang bundle over $(M_{d}, \omega_{d})$. 
%This is given as the restriction of the Hopf fibration $S^{2n+3} \rightarrow \CP^{n+1}$. 
%As explained in \cite[Example 3]{Muller}, we see that the associated line bundle to the 
%principal $S^1$-bundle 
%$P_{d}^{2n+1} \rightarrow M_{d}$ is a resolution of the singularity of $V_{d}^{n}$. 
Let $\widetilde{\C^{n+2}} = \{ (\ell, z) \in \CP^{n+1} \times \C^{n+2} \mid z \in \ell \}$, 
and 
$\sigma: \widetilde{\C^{n+2}}  \rightarrow \CP^{n+1}$, 
$\tau: \widetilde{\C^{n+2}}\rightarrow \C^{n+2}$ the projections to the first and second factors, 
respectively. 
Note that $\sigma$ is the tautological line bundle $\mathcal{O}(-1) \rightarrow \CP^{n+1}$, and $\tau$ is the blow-up of the origin. 
%\[
%   \xymatrix{
%    \widetilde{\C^{n+2}} \ar[r]^-{\tau} \ar[d]_{\sigma} & \C^{n+2} \\
%    \CP^{n+1},  & 
%   }
%\]
%where $\widetilde{\C^{n+2}} = \{ (\ell, z) \in \CP^{n+1} \times \C^{n+2} \, | \, z \in \ell \}$ and 
%$\mathcal{O}(-1)$ denotes the line bundle over $\CP^{n+1}$ of degree $-1$. 
The map $\sigma$ restricts to 
the line budle $\sigma|: \mathcal{O}(-1)|_{M_{d}} \rightarrow M_{d} \subset \CP^{n+1}$, and in fact  
$\mathcal{O}(-1)|_{M_{d}}$ is isomorphic to $L_{d}$. 
On the other hand, $\tau$ defines the restriction $\tau|: L_{d} \rightarrow X_{d}$: 
\[
   \xymatrix{
    \ar[r]^-{\tau|} L_{d} \cong \mathcal{O}(-1)|_{M_{d}}  \ar[d]_{\sigma|} & X_{d} \\
    M_{d} & 
   }
\]
It turns out that 
$\tau|$ contracts 
the zero-section of the bundle $\sigma|$ to the singular point of $X_{d}^{n+1}$. 
This shows that $L_{d}$ is a resolution of the singularity whose exceptional set is $M_{d}$.

Next we discuss a monodromy of a symplectic Lefschetz-Bott fibration $\pi_{c}: E_{d,c} \rightarrow \D$ with compact fibers 
obtained from $\pi: L_{d} \rightarrow \C$ as in Section \ref{section: compact fibers}. 
As we saw there, 
the monodromy of $\pi_{c}$ along $\del \D$ is a fibered Dehn twist along the boundary of the fiber $\pi_{c}^{-1}(1)$. 
We will see below that $\pi_{c}^{-1}(1)$ is diffeomorphic to the affine hypersurface $V_{d}(\delta_{0})$ given by 
$$ 
	V_{d}(\delta_{0})=\left\{ (x_{0}, \ldots, x_{n}) \in \C^{n+1} \;\middle|\; \sum_{j=0}^{n}x_{j}^{d}=1 \right\} \cap 
	\left\{ \sum_{j=0}^{n}|x_{j}|^{2} \leq \delta_{0}^2 \right\}.
$$ 

The following is known. 

\begin{theorem}[{\cite[Theorem 1.1]{AA}}]\label{thm: AA}
%Let $(V_{d}(\delta_{0}), \omega_{0})$ be the smooth manifold as above equipped with 
%the standard symplectic structure induced from the ambient space $(\C^{n}, \omega_{st})$. 
Let $\omega_{0}$ be a symplectic form on $V_{d}(\delta_{0})$ induced by the standard symplectic form on $\C^{n+1}$
Then, a fibered Dehn twist along the boundary $\del V_{d}(\delta_{0})$ is symplectically 
isotopic to a product of $d(d-1)^n$ Dehn twists in 
$\Symp_{c}(V_{d}(\delta_{0}), \omega_{0})$.
\end{theorem}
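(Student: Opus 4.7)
The plan is to Morsify the Lefschetz-Bott fibration of this paper to produce an honest Lefschetz fibration whose monodromy around $\del \D$ remains the fibered Dehn twist, and whose number of Lefschetz critical points is $d(d-1)^n$, read off from a Lagrange-multiplier computation on $V_d$.

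First, I would apply the main theorem and the construction of Section~\ref{section: compact fibers} to the pair $(M_{d}, H_{d}) \subset \CP^{n+1}$, producing a symplectic Lefschetz-Bott fibration $\pi_{c}: E_{d,c} \to \D$ whose regular fiber is diffeomorphic to $V_{d}(\delta_{0})$ and whose Bott critical locus is identified with the hyperplane section $H_{d}$. By the preceding proposition, the monodromy of $\pi_{c}$ around $\del \D$ is exactly the fibered Dehn twist along $\del V_{d}(\delta_{0})$.

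Next, I would Morsify $\pi_c$ by a compactly supported perturbation modeled on a generic linear function of the fiber coordinates. After identifying the fibers of $\pi_c$ with $V_d(\delta_0) \subset \C^{n+1}$ via symplectic parallel transport, consider the perturbation $\pi_c + \epsilon L$, where $L(x) = \sum_{j=0}^{n} c_j x_j$ is a generic linear function. A direct Lagrange-multiplier computation on $V_d = \{\sum_{j=0}^n x_j^d = 1\}$ gives the conditions $c_j = \mu d\, x_j^{d-1}$ for some multiplier $\mu$, so that $x_j/x_0$ is a $(d-1)$-th root of $c_j/c_0$; this yields $(d-1)^n$ configurations, while the constraint $\sum x_j^d = 1$ then contributes $d$ choices of the scaling $x_0$, for a total of $d(d-1)^n$ Morse critical points of $L|_{V_d}$. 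The corresponding critical points of $\pi_c + \epsilon L$ on $E_{d,c}$ should all emerge from the Bott locus $H_d$ as that locus splits, each producing a Lagrangian $n$-sphere vanishing cycle in $V_d(\delta_0)$.

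Because the perturbation is compactly supported in the interior of $E_{d,c}$, the boundary monodromy is unchanged and remains the fibered Dehn twist along $\del V_d(\delta_0)$. By the standard correspondence between symplectic Lefschetz fibrations over the disk and factorizations of the boundary monodromy in the symplectic mapping class group, this monodromy decomposes as the product of the $d(d-1)^n$ Dehn twists along the vanishing Lagrangian spheres, giving the factorization in $\Symp_{c}(V_{d}(\delta_{0}), \omega_{0})$.

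The main obstacle is making the Morsification rigorous in the symplectic category: one must choose a compactly supported, symplectic lift of the linear perturbation $L$ to $E_{d,c}$ that breaks the Bott critical locus $H_d$ into exactly the $d(d-1)^n$ isolated Morse critical points coming from the Lagrange computation, without producing spurious ones elsewhere, and one must verify that the resulting vanishing cycles are honest Lagrangian $n$-spheres in $V_d(\delta_0)$ rather than more degenerate coisotropic submanifolds. Carefully controlling the interplay between the base parallel transport of $\pi_c$ and the fiberwise linear perturbation, and matching the algebraic count with the Morse-Bott Morsification count, is the central technical challenge.
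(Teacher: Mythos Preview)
First, note that the paper does not prove this statement: Theorem~\ref{thm: AA} is quoted from \cite{AA} as a known result and used as a black box (for Proposition~\ref{proposition: relation} and in Section~\ref{section: A_k}). There is no proof in the paper to compare against.

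That said, your strategy has a genuine gap that is not merely technical. You want to Morsify $\pi_c\colon E_{d,c}\to\D$ into an honest Lefschetz fibration with $d(d-1)^n$ critical points, keeping the same total space. But $E_{d,c}$ is a disk bundle over $M_d$, so $\chi(E_{d,c})=\chi(M_d)$, whereas a Lefschetz fibration over $\D$ with regular fiber $V_d(\delta_0)$ (real dimension $2n$) and $N$ Lefschetz points has Euler characteristic $\chi(V_d(\delta_0))+(-1)^{n+1}N$. Using $\chi(M_d)=\chi(V_d(\delta_0))+\chi(H_d)$ this forces $N=(-1)^{n+1}\chi(H_d)$, which is not $d(d-1)^n$ and can be negative: already for $d=2$, $n=2$ one gets $N=-\chi(Q^{1})=-2$. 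So $E_{d,c}$ carries no such Lefschetz fibration, and no compactly supported perturbation of $\pi_c$ can produce one. This is exactly the content of the Remark following Theorem~\ref{thm: AA} and of Section~\ref{section: fillings}: the Dehn-twist factorization comes from a Lefschetz fibration on the \emph{Milnor fiber} (the smoothing of the singularity), while $\pi_c$ lives on the \emph{resolution} $E_{d,c}$; these are distinct strong fillings of the same contact link, and their inequivalence is precisely what Proposition~\ref{prop: fillings} exploits. Your proposal would erase the distinction the paper is built on.

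There is also a local problem: the perturbation ``$\pi_c+\epsilon L$'' is not well-posed, since $L$ is a function on a single regular fiber and its extension to $E_{d,c}$ by parallel transport degenerates exactly along the Bott locus $H_d$ where you need it. Your Lagrange-multiplier count $d(d-1)^n$ is correct, but it counts critical points of a generic linear projection defined \emph{on} $V_d(\delta_0)$ itself; that is a Lefschetz fibration whose fiber is a hyperplane section of $V_d(\delta_0)$, one complex dimension lower, and it is not a Morsification of $\pi_c$.
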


In our case, the symplectic structure on $V_{d}(\delta_{0})$ is different from the standard one $\omega_{0}$. 
We will show, however, that 
the above relation between a fibered Dehn twist and Dehn twists also holds under our symplectic structure. 

Let us specify our fiber $(F=\pi_{c}^{-1}(1), \omega_{F}=\Omega|_{F})$. 
In general, for a given closed symplectic manifold $(M, \omega)$ with $[\omega/2\pi]=PD[H]$, 
the fiber $(F,\omega_{F})$ is symplectomorphic to 
\begin{equation}\label{eqn: fiber}
	(V, 2\omega|_{V}=2d\lambda) \cup ([0,c_{0}] \times \del V, 2d(e^t\lambda|_{\del V})), 
\end{equation}
where $V=M \setminus \mathring{\nu}_{M}(H)$ and $c_{0}$ is some constant 
determined by the condition $h=c^2$ in Section \ref{section: compact fibers}. 
In our case, 
$(M,\omega)$ is $(M_{d}, \omega_{d})$, and $V$ is 
\begin{eqnarray*}
	& & (M_{d}\setminus \{ z_{n+1}=0\}) \cap \left\{ \sum_{j=0}^{n}|z_{j}/z_{n}|^2 \leq \delta_{0}^2 \right\} \cong  V_{d}^{n}(\delta_{0})
	%&\cong & \{ (x_{0}, \ldots, x_{n}) \in \C^{n+1} \mid \sum_{j=0}^{n}x_{j}^{d}=1, \sum_{j=0}^{n}|x_{j}|^{2} \leq \delta_{0}^2 \} \\
	%& =: &  V_{d}^{n}(\delta_{0})
\end{eqnarray*}
for some $\delta_{0}$. 
According to the discussion of \cite[Section 7] {Biran}, 
we may take 
$$\lambda_{d}=-d^{\C}\log\left(\sum_{j=0}^{n}|z_{j}|^{2}+1\right) = -d^{\C}\log(\|z\|^{2}+1)$$ on $V_{d}^{n}(\delta_{0})$ 
as the primitive $1$-form $\lambda$ in (\ref{eqn: fiber}), where $d^{\C}=d \circ J_{0}$. 
This $\lambda_{d}$ is different from the standard Liouville form on $V_{d}^{n}(\delta_{0})$. 
However, we conclude as follows. 

\begin{proposition}\label{proposition: relation}
Let $(F,\omega_{F})$ be the fiber $\pi_{c}^{-1}(1)$ as above. 
A fibered Dehn twist along the boundary $\del F$ is isotopic to 
a product of Dehn twists in $\Symp_{c}(F, \omega_{F})$.
\end{proposition}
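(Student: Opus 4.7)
The plan is to transfer the Acu--Avdek factorization (Theorem~\ref{thm: AA}) from the standard Milnor fiber $(V_{d}^{n}(\delta_{0}), \omega_{0})$ to $(F, \omega_{F})$ by constructing a symplectomorphism between them. Both symplectic forms are K\"{a}hler with respect to the complex structure $J_{0}$ inherited from the ambient $\C^{n+1}$: $\omega_{0}$ arises from the potential $\phi_{0} = \tfrac{1}{2}\|z\|^{2}$, whereas $\omega_{F}$ is built, via the decomposition (\ref{eqn: fiber}), from the Fubini--Study-type potential $\phi_{d} = \log(1+\|z\|^{2})$ extended by a symplectization collar. Since $\phi_{0}$ and $\phi_{d}$ are both strictly plurisubharmonic on $V_{d}^{n}$, the convex combination $\phi_{t} = (1-t)\phi_{0} + t\phi_{d}$ yields a smooth path $\omega_{t} = -dd^{\C}\phi_{t}$ of symplectic forms on the entire affine variety $V_{d}^{n} = \{\sum z_{j}^{d} = 1\}$.

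First I would pass to the Liouville completion of both sides: both $(V_{d}^{n}(\delta_{0}), \omega_{0})$ and $(V_{d}^{n}(\delta_{0}), \omega_{F})$ sit as Liouville subdomains of the same affine variety $V_{d}^{n}$, and the path $\omega_{t}$ exhibits the two Liouville structures as deformation equivalent. Standard results on deformation of Weinstein structures (cf.\ \cite{CE}) then produce an ambient symplectomorphism of the completion, which restricts to a diffeomorphism $\Phi : F \to V_{d}^{n}(\delta_{0})$ with $\Phi^{*}\omega_{0} = \omega_{F}$ and an induced isomorphism
\[
	\Phi_{*}:\Symp_{c}(F, \omega_{F}) \xrightarrow{\cong} \Symp_{c}(V_{d}^{n}(\delta_{0}), \omega_{0})
\]
of compactly supported symplectomorphism groups.

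Next I would verify that $\Phi_{*}$ carries the fibered Dehn twist along $\del F$ to a fibered Dehn twist along $\del V_{d}^{n}(\delta_{0})$, and that Lagrangian spheres correspond to Lagrangian spheres, so that their Dehn twists also correspond under $\Phi_{*}$. The fibered Dehn twist is determined up to symplectic isotopy by the spherically fibered coisotropic collar at the boundary, a structure which $\Phi$ preserves by construction. Once this identification is in place, Theorem~\ref{thm: AA} produces the factorization of the fibered Dehn twist into $d(d-1)^{n}$ Dehn twists on the standard side, and $\Phi^{-1}$ pulls it back to the desired factorization inside $\Symp_{c}(F, \omega_{F})$.

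The main obstacle is executing the Liouville deformation compactly: the primitives $\lambda_{t} = -d^{\C}\phi_{t}$ differ near $\del V_{d}^{n}(\delta_{0})$, so a na\"{i}ve Moser argument on the bounded piece is not automatically supported away from the boundary. I expect to handle this by working in the full Liouville completion, where one may freely isotope between Liouville subdomains sharing the same skeleton; alternatively, one can interpolate the Liouville one-forms only inside a collar of the boundary so that the two structures agree on a common end, reducing the comparison to a relative Moser argument on the compact core.
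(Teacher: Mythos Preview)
Your overall strategy---interpolate the K\"ahler potentials, pass to completions, and transport the Acu--Avdek relation---is the same as the paper's, and the last paragraph correctly identifies where the work lies. But there is a genuine overreach in how you phrase the output of the Liouville deformation. You assert that the ambient symplectomorphism of the completion ``restricts to a diffeomorphism $\Phi : F \to V_{d}^{n}(\delta_{0})$ with $\Phi^{*}\omega_{0} = \omega_{F}$.'' This does not follow: a symplectomorphism $\Psi$ of the completions sends $F$ to \emph{some} Liouville subdomain $\Psi(F)$ of the target, not to $V_{d}^{n}(\delta_{0})$ itself. Indeed there is no reason for $(F,\omega_{F})$ and $(V_{d}^{n}(\delta_{0}),\omega_{0})$ to be symplectomorphic as compact domains at all---their symplectic volumes depend on the collar parameter $c_{0}$ and on $\delta_{0}$, and nothing forces these to match (and note the extra factor of $2$ in~(\ref{eqn: fiber})). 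So the map $\Phi_{*}$ you want to use to transport the factorization is not available as stated.

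The paper sidesteps this by never asking for a symplectomorphism of the compact pieces. It works entirely at the level of $\pi_{0}(\Symp_{c})$ and chains together homotopy equivalences: first Lemma~\ref{lemma} (shrinking along the Liouville flow) gives $\Symp_{c}(F,\omega_{F})\simeq \Symp_{c}(V_{d}^{n}(\delta_{0}),\omega_{\log})$; then Evans' results \cite[Lemma~2.2 and Proposition~2.1]{Evans} identify $\pi_{0}(\Symp_{c})$ of the latter with that of its completion, which is the affine hypersurface with $\omega_{0}$, and then back down to $(V_{d}^{n}(\delta_{0}),\omega_{0})$. Each step is induced by Liouville flow or an honest symplectomorphism of completions, so (fibered) Dehn twists visibly go to (fibered) Dehn twists, and Theorem~\ref{thm: AA} finishes the argument. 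Your convex-combination-of-potentials idea is precisely what underlies Evans' Lemma~2.2, so once you replace the too-strong ``$\Phi^{*}\omega_{0}=\omega_{F}$'' claim with the $\pi_{0}(\Symp_{c})$ statement, your outline converges to the paper's proof.
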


To prove this, we need  the following lemma. 

\begin{lemma}\label{lemma}
Let $(V, \omega=d\lambda)$ be an exact symplectic manifold with convex boundary. 
For an arbitrarily positive number $a>0$, set $$(V_{a}, \omega_{a})=(V, \omega) \cup ([0,a] \times \del V, d(-e^{t}(\lambda|_{\del V})),$$
where $t \in [0,a]$. 
Then, $\Symp_{c}(V, \omega)$ is homotopy equivalent to $\Symp_{c}(V_{a}, \omega_{a})$.
\end{lemma}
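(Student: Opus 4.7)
The plan is to show that the extension-by-identity map $\iota: \Symp_{c}(V,\omega) \hookrightarrow \Symp_{c}(V_{a}, \omega_{a})$ is a weak homotopy equivalence, by constructing a deformation retraction of $\Symp_{c}(V_{a}, \omega_{a})$ onto its image built from conjugation by the Liouville flow. First I would observe that the Liouville vector field $X$ dual to $\lambda$ (defined by $\iota_{X}\omega = \lambda$) points outward along $\del V$ by convexity and therefore glues smoothly to $\pm\del_{t}$ on the collar $[0,a] \times \del V$, yielding a vector field on all of $V_{a}$. Its backward flow $\phi^{-T}_{X}$ is defined on $V_{a}$ for every $T \geq 0$: a point of the collar at height $t$ reaches $\del V$ after time $t$ under $\phi^{-T}_{X}$ and then enters $V$. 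Consequently, for any compact $K \subset V_{a}$ there exists $T_{K} > 0$ such that $\phi^{-T}_{X}(K) \subset V$ whenever $T \geq T_{K}$.

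Next I would use this flow to define a conjugation map. For $\psi \in \Symp_{c}(V_{a}, \omega_{a})$, set $\Phi^{T}(\psi) = \phi^{-T}_{X} \circ \psi \circ \phi^{T}_{X}$, interpreted as the identity outside $\phi^{-T}_{X}(\mathrm{supp}\,\psi)$, on which $\phi^{T}_{X}$ is automatically defined. The Liouville flow satisfies $(\phi^{s}_{X})^{*}\omega_{a} = e^{s}\omega_{a}$, so the two conjugating factors contribute reciprocal rescalings and one checks directly that $\Phi^{T}(\psi)^{*}\omega_{a} = \omega_{a}$; hence $\Phi^{T}(\psi) \in \Symp_{c}(V_{a}, \omega_{a})$. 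Moreover, $t \mapsto \Phi^{tT}(\psi)$ is a continuous path in $\Symp_{c}(V_{a}, \omega_{a})$ joining $\psi$ to $\Phi^{T}(\psi)$, and if $\mathrm{supp}\,\psi \subset K$ with $T \geq T_{K}$, then the endpoint $\Phi^{T}(\psi)$ lies in $\Symp_{c}(V, \omega) \subset \Symp_{c}(V_{a}, \omega_{a})$.

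To upgrade this to a homotopy equivalence, I would filter $\Symp_{c}(V_{a}, \omega_{a}) = \varinjlim_{K} \Symp_{c}(V_{a}; K)$ by support and run the construction parametrically. Fixing $K \subset V_{a}$ compact and $T_{K}$ as above, the map $\Phi^{T_{K}}$ restricts to a homeomorphism from $\Symp_{c}(V_{a}; K)$ onto $\Symp_{c}(V_{a};\, \phi^{-T_{K}}_{X}(K)) \subset \Symp_{c}(V, \omega)$, and the isotopy $\{\Phi^{tT_{K}}\}_{t \in [0,1]}$ realizes $\iota \circ \Phi^{T_{K}}$ as homotopic to the identity of $\Symp_{c}(V_{a}; K)$. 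Passing to the colimit over $K$ then yields the weak homotopy equivalence $\iota$. The main point one must handle carefully is that the forward Liouville flow $\phi^{T}_{X}$ need not be globally defined on $V_{a}$ (trajectories can exit the collar in finite time), but for compactly supported $\psi$ the conjugation only involves the behavior of $\phi^{T}_{X}$ on the compact set $\phi^{-T}_{X}(\mathrm{supp}\,\psi)$, where it is well-defined by construction.
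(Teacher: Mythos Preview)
Your approach is essentially the same as the paper's: both prove the result by conjugating with the Liouville flow. The paper's argument is shorter because it observes that a single uniform time $T=a$ suffices: since the attached collar has length exactly $a$, the backward flow $\Phi^{-a}$ carries all of $V_{a}$ into $V$, so $L_{a}(\psi)=\Phi^{-a}\circ\psi\circ\Phi^{a}$ defines a map $\Symp_{c}(V_{a},\omega_{a})\to\Symp_{c}(V,\omega)$ directly, and the isotopy $s\mapsto\Phi^{-sa}\circ\psi\circ\Phi^{sa}$ exhibits $L_{a}$ as an honest homotopy inverse to the inclusion $\iota_{a}^{S}$. Your filtration by compact supports and passage to the colimit is therefore unnecessary here, and as you yourself note it only yields a \emph{weak} homotopy equivalence, whereas the statement asks for a homotopy equivalence; replacing your support-dependent $T_{K}$ by the uniform $T=a$ closes this small gap. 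Your careful remark about the forward flow $\Phi^{a}$ not being globally defined on $V_{a}$, and the resolution via extension by the identity outside $\Phi^{-a}(\mathrm{supp}\,\psi)$, is a point the paper glosses over.
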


\begin{proof}
The following argument is essentially the first half of the proof of  \cite[Proposition 2.1]{Evans}.
Let $\Phi^{t}$ be the flow of the Liouville vector field $X$ of $\lambda_{a}$ on $V_{a}$. 
Define a map $L_{a}: \Symp_{c}(V, d\lambda) \rightarrow \Symp_{c}(V_{a}, \omega_{a})$ by 
$
	L_{a}(\varphi)= \Phi^{-a} \circ \varphi \circ \Phi^{a}.
$
The inclusion $\iota_{a}: V \hookrightarrow V_{a}$ induces the inclusion 
$\iota_{a}^{S}: \Symp_{c}(V, d\lambda) \hookrightarrow \Symp_{c}(V_{a}, \omega_{a})$. 
The map $L_{a}$ is a homotopy inverse for $\iota_{a}^{S}$.
\end{proof}

\begin{proof}[Proof of Proposition \ref{proposition: relation}]
First, we prove that $\pi_{0}(\Symp_{c}(F, \omega_{F})) \cong \pi_{0}(\Symp_{c}(V_{d}^{n}(\delta_{0}), d\lambda_{st})).$ 
Set $\lambda_{\log}= -d^{\C}\log(\| z\|^2+1)$ and $\omega_{\log}=d\lambda_{\log}$. 
By Lemma \ref{lemma}, $\Symp_{c}(F, \omega_{F})$ is homotopy equivalent to 
$\Symp_{c}(V_{d}^{n}(\delta_{0}), \omega_{\log})$. 
Let $(\hat{V}_{d}^{n}(\delta_{0}), \hat{\omega}_{\log})$ be the symplectic completion of 
$(V_{d}^{n}(\delta_{0}), \omega_{\log})$. 
%, that is, 
%the manifold $\hat{V}_{d}^{n}(\delta_{0})=V_{d}^{n}(\delta_{0}) \cup [0, \infty) \times \del V_{d}^{n}(\delta_{0})$ 
%equipped with the symplectic structure $\hat{\omega}_{\log}$ that coincides with 
%$\omega_{\log}$ on $V_{d}^{n}(\delta)$ and 
%$d(e^t \lambda_{\log}|_{\del V_{d}^{n}(\delta_{0})})$ on $[0,\infty) \times \del V_{d}^{n}(\delta_{0})$.
By \cite[Lemma 2.2]{Evans}, 
this completion is symplectomorphic to $(X_{d}^{n}(1), \omega_{0})$, where 
$$
	X_{d}^{n}(1)= \left\{ (x_{0}, \ldots, x_{n}) \in \C^{n+1} \;\middle|\; \sum_{j=0}^{n}x_{j}^{d}=1 \right\}. 
$$
Moreover, according to \cite[Proposition 2.1]{Evans}, the inclusion maps 
$\Symp_{c}(V_{d}^{n}(\delta_{0}), \omega_{\log}) \hookrightarrow \Symp_{c}(\hat{V}_{d}^{n}(\delta_{0}), \hat{\omega}_{\log})$ and 
$\Symp_{c}(V_{d}^{n}(\delta_{0}), \omega_{0}) \hookrightarrow \Symp_{c}(X_{d}^{n}(1), \omega_{0})$ are 
weakly homotopy equivalences. 
Therefore, we conclude that 
\begin{eqnarray*}
	& & \pi_{0}(\Symp_{c}(F, \omega_{F}))  \cong  \pi_{0}(\Symp_{c}(V_{d}^{n}(\delta_{0}), \omega_{\log})) 
	\cong  \pi_{0}(\Symp_{c}(\hat{V}_{d}^{n}(\delta_{0}), \hat{\omega}_{\log})) \\
	 & \cong &  \pi_{0}(\Symp_{c}(X_{d}^{n}(1), \omega_{0})) 
	 \cong  \pi_{0}(\Symp_{c}(V_{d}^{n}(\delta_{0}), \omega_{0})). 
\end{eqnarray*}
It can be checked that each isomorphism above assigns a (fibered) Dehn twist to each (fibered) Dehn twist. 
Combining this and Theorem \ref{thm: AA} completes the proposition. 
\end{proof}
%
%Combining Theorem \ref{thm: AA} with Proposition \ref{proposition: relation}, 
%we have the following mapping class relation in $\pi_{0}(\Symp(F,\omega_{F}))$: 
%$$
%	[\tau_{\del}]=[\tau_{S_{1}} \circ \cdots \circ \tau_{S_{d(d-1)^n}}], 
%$$
%where $S_{j}$ are appropriate Lagrangian spheres in $(F,\omega_{F})$.

\end{example}

\begin{remark} 
One can interpret the above relation from the point of view of symplectic fillings of the link of the singularity 
$0 \in X_{d}^{n+1}$ as follows. 
As Acu and Avdeck pointed out in \cite{AA}, 
the product of Dehn twists comes from a Lefschetz fibration on a smoothing 
of the singularity, i.e., a Milnor fiber of the singularity. 
It serves as a Stein filling of the link of the singularity. 
The fibered Dehn twist is obtained as the monodromy of a Lefschetz-Bott 
fibration $\pi_{c}: E_{d,c} \rightarrow \D$ on a resolution of the singularity along $\del \D$. 
After smoothing corners of $E_{d,c}$ (see Appendix \ref{section: smoothing}), 
it serves as a strong symplectic filling of a contact manifold supported by the induced open book, 
in fact which is contactomorphic to the link of the singularity. 
Thus, the mapping class relation is related to two fillings of the link of the singularity. 
This interpretation will play an important role in Section \ref{section: A_k} below.
\end{remark}

%
%$$
%	\Symp_{c}(W, d\theta) := \{ \varphi \in \Symp(W, d\theta) \mid \text{Supp}(\varphi) \text{ is a compact set of }W\setminus \del W \}
%$$
%
%\begin{proposition}
%$\Symp_{c}(W, d\theta)$ is weakly homotopy equivalent to $\Symp_{c}(W, d\lambda_{st})$. 
%\end{proposition}
%
%\begin{proposition}
%The blow-up is the disk bundle associated to a Boothby-Wang bundle.
%\end{proposition}

\begin{example}[Cyclic quotient singularity]\label{example: cyclic}
%\subsection{Cyclic quotient singularity}\label{section: cyclic}

Consider $(\CP^{n}, m\omega_{FS})$ and its hypersurface $H_{m}^{n-1}$ given by 
$$
	\left\{ [z_{0}: \ldots: z_{n}]\in\CP^{n} \;\middle|\; \sum_{j=0}^{n} z_{j}^{m}=0  \right\}.
$$
Since $[H_{m}^{n-1}]=m[h]\in H_{n-2}(\CP^{n}; \Z)$ for a hyperplane $h \subset \CP^{n}$,  
we have $[m\omega_{FS}/2\pi]=PD[H_{m}^{n-1}]$. 
Therefore, by Theorem \ref{theorem}, a line bundle $L_{m} \rightarrow (\CP^{n}, m\omega_{FS})$ with 
$c_{1}(L_{m})=-[m\omega_{FS}/2\pi]$ admits 
a symplectic Lefschetz-Bott fibration. Note that this line bundle is isomorphic to 
the line bundle $\mathcal{O}(-m) \rightarrow \CP^{n}$ of degree $-m$, 
where 
$$
	\mathcal{O}(-m)=\mathcal{O}(-1)^{\otimes m}=
	\{ (\ell, x_{1}\otimes \cdots \otimes x_{m}) \in   \CP^{n}\times(\C^{n+1})^{\otimes m} \mid  x_{1},\ldots,x_{m} \in \ell \} .
$$ 

Similar to the previous example, 
this line bundle has a singular theoretical meaning. 
Let $\tau: \mathcal{O}(-1) \rightarrow \C^{n+1}$ and $\sigma: \mathcal{O}(-1) \rightarrow \CP^{n}$ be 
the projections as in the previous example. 
Let us denote by $\sigma_{m}: \mathcal{O}(-m) \rightarrow \CP^{n}$ the projection to the first factor. 
Considering the projection $\mathcal{O}(-m) \rightarrow (\C^{n+1})^{\otimes m}$ to the second factor, 
its image is identified with the quotient space $\C^{n+1}/ G_{m}$ derived from the diagonal action of the group 
$G_{m}= \{ \xi \in \C\mid \xi^{m}=1  \}$ on $\C^{n+1}$. 
Write $\tau_{m}:\mathcal{O}(-m) \rightarrow \C^{n+1}/G_{m}$ for this projection. 
We obtain the following commutative diagram: 
\[
   \xymatrix{
     & \ar[ld]_-{\sigma} \mathcal{O}(-1) \ar[r]^-{\tau} \ar[d]_{\tilde{q}} & \C^{n+1} \ar[d]^{q} \\
    \CP^{n} & \ar[l]_-{\sigma_{m}} L_{m} \cong \mathcal{O}(-m)  \ar[r]^-{\tau_{m}} & \C^{n+1}/G_{m}, 
   }
\]
where $\tilde{q}:\mathcal{O}(-1) \rightarrow \mathcal{O}(-m)$ and $q:\C^{n+1} \rightarrow \C^{n+1}/G_{m}$ are the natural projections. 
The quotient map $q$ yields a singularity at the origin $[0] \in \C^{n+1}/G_{m}$, which is a so-called 
\textit{cyclic quotient singularity}. 
Thus, the total space of our line bundle $L_{m} \cong \mathcal{O}(-m)$ can be regarded as a resolution of the singularity. 
\end{example}

\begin{remark}\label{remark: Stein}
The link of this singularity is a so-called \textit{lens space} of dimension $2n+1$. 
It is known that if $n\geq 2$, the singularity is not smoothable \cite{Sch}. 
Moreover, as discussed in \cite[Section 6.2]{EKP}, \cite[Corollary 4.8]{PPP} and \cite[Example 6.5]{BCS}, 
the link bounds no $(2n+2)$-dimensional manifold which is decomposed into handles of indices $ \leq n+1$. 
Thus, Milnor's topological characterization of Stein domains \cite{Milnor} shows that any contact structure on the link of dimension 
$2n+1\geq 5$ is never Stein fillable.
\end{remark}

%\begin{example}[Fillings of the link of the $A_{k}$-singularity]

%\section{Application}
\section{$A_{k}$-type singularities and Lefschetz-Bott fibrations}\label{section: A_k}

\subsection{Fillings of the link of the $A_{k}$-type singularity}\label{section: fillings}

Let $(M, \xi)$ be a closed contact manifold. 
A \textit{strong symplectic filling} of $(M, \xi)$ is a compact symplectic manifold $(W, \omega)$ with 
$\del W = M$ such that there exists a Liouville vector field $X$ defined near $\del W$, pointing outwards along $\del W$, 
and satisfying $\xi=\ker(i_{X}\omega|_{TM} )$ (as cooriented contact structure).
Consider the complex polynomial 
$$f(x_{0},\dots,x_{n+1})=x_{0}^{2}+ \cdots +x_{n}^{2}+x_{n+1}^{k+1} $$ 
and set 
$$V_{k}=\{ x\in \C^{n+2} \mid f(x)=0 \}, \  \Sigma_{k}=V_{k} \cap S^{2n+3}.$$
The variety $V_{k}$ has a unique singularity at the origin. 
This is called the $A_{k}$-type singularity, and $\Sigma_{k}$ is called the \textit{link} of the singularity.
The link inherits a canonical contact structure from the standard contact sphere $S^{2n+3}$. 
We often denote this contact manifold just by $\Sigma_{k}$ omitting the contact structure from the notation. 

In this subsection, 
we construct more than one strong symplectic filling of $\Sigma_{k}$ using symplectic Lefschetz-Bott fibrations over a disk. 
Such a fibration can be written topologically as the fiber sum of smooth 
Lefschetz-Bott fibrations with a single singular fiber. 
For given smooth Lefschetz-Bott fibrations $\pi_{j}: E_{j} \rightarrow D^2$ ($j=1,2$) 
with the fibers diffeomorphic to $F$, 
the \textit{fiber sum} $E_{1} \#_{f} E_{2}$ is defined as follows: 
Take $z_{j} \in \del D^{2}$ ($j=1,2$) and set $F_{j}=\pi_{j}^{-1}(z_{j})$.
Take tubular neighborhoods $\nu(F_{j})$
of these fibers in the vertical boundaries $\del_{v}E_{j}$ so that 
they are diffeomorphic to $[-\epsilon, \epsilon] \times F$ for some small $\epsilon>0$. 
Using a fiber-preserving diffeomorphism of $\nu(F_{j})$, 
we can glue $E_{j}$ together along $\nu(F_{j})$ 
and construct a new manifold $E_{1} \#_{f} E_{2}$, 
which admits a smooth Lefschetz-Bott fibration $\pi: E_{1} \#_{f} E_{2} \rightarrow D^{2} \natural D^{2} \cong D^2$, 
where $\natural$ denotes a boundary connected sum. 
The next lemma follows from this construction immediately. 

\begin{lemma}\label{lemma: fibersum}
Let $\pi_{j}: E_{j} \rightarrow D^2$ ($j=1,2$) be smooth (or symplectic) Lefschetz-Bott fibrations 
with the regular fibers diffeomorphic to $F$. 
Then, we have $\chi(E_{1} \#_{f} E_{2})=\chi(E_{1})+\chi(E_{2}) - \chi(F)$. 
\end{lemma}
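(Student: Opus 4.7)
The statement is purely topological (the symplectic structure plays no role), so the plan is to reduce it to the standard inclusion--exclusion formula for the Euler characteristic.

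The idea is to decompose $E_{1}\#_{f}E_{2}$ into two overlapping open subsets $U_{1}$ and $U_{2}$ with $U_{j}$ deformation retracting onto $E_{j}$, and with $U_{1}\cap U_{2}$ deformation retracting onto the tubular neighborhood $\nu(F_{1})\cong[-\epsilon,\epsilon]\times F$ used in the gluing. First I would fix small $0<\epsilon'<\epsilon$ and take
\[
U_{1}= E_{1}\cup\bigl([-\epsilon,\epsilon')\times F\bigr),\qquad U_{2}=\bigl((-\epsilon',\epsilon]\times F\bigr)\cup E_{2},
\]
where the two copies of $[-\epsilon,\epsilon]\times F$ inside the $E_{j}$ have been identified via the fiber-preserving gluing diffeomorphism. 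By construction $U_{1}\cup U_{2}=E_{1}\#_{f}E_{2}$, each $U_{j}$ collapses onto $E_{j}$, and $U_{1}\cap U_{2}\simeq(-\epsilon',\epsilon')\times F\simeq F$.

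Then I would apply the additivity of the Euler characteristic for such an open cover (equivalently, the Mayer--Vietoris sequence), giving
\[
\chi(E_{1}\#_{f}E_{2})=\chi(U_{1})+\chi(U_{2})-\chi(U_{1}\cap U_{2})=\chi(E_{1})+\chi(E_{2})-\chi(F),
\]
using $\chi([-\epsilon,\epsilon]\times F)=\chi(F)$ since the interval is contractible. Homotopy invariance of $\chi$ handles the identifications $U_{j}\simeq E_{j}$ and $U_{1}\cap U_{2}\simeq F$.

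Because this is a straightforward inclusion--exclusion computation, there is no real obstacle; the only point requiring a bit of care is making the neighborhoods $U_{j}$ overlap in a set that is genuinely homotopy equivalent to $F$ (rather than, say, to two disjoint copies of $F$ or to the empty set), which is why I would take a slightly smaller interval $(-\epsilon',\epsilon')$ inside $[-\epsilon,\epsilon]$. Everything else, including the fact that the base $D^{2}\natural D^{2}\cong D^{2}$ does not affect the computation, follows from the product structure of the gluing collar.
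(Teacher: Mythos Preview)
Your argument is correct and is exactly the standard Mayer--Vietoris computation that the paper has in mind: the paper does not write out a proof at all but simply remarks that the lemma follows immediately from the construction of the fiber sum, and your inclusion--exclusion with $U_{1}\cap U_{2}\simeq F$ is precisely how one unpacks that remark.
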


\begin{proposition}\label{prop: fillings}
The contact manifold $\Sigma_{k}$ has at least $\lceil k/2 \rceil$ strong symplectic fillings up to homotopy, 
where $\lceil \, \cdot \, \rceil$ denotes the ceiling function.
\end{proposition}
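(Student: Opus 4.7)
The plan is to construct, for each $\ell$ with $0\le \ell\le \lceil k/2\rceil-1$, a strong symplectic filling $W_\ell$ of $\Sigma_k$ realized as the total space of a symplectic Lefschetz--Bott fibration over $D^2$ built by iterated fiber-sum, and then to distinguish the resulting $\lceil k/2\rceil$ fillings up to homotopy by an elementary topological invariant.

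The construction uses two building blocks. First, the Milnor fiber $F_m$ of the $A_m$-singularity $\{x_0^2+\cdots+x_n^2+x_{n+1}^{m+1}=0\}$ in $\C^{n+2}$ admits a symplectic Lefschetz fibration $\pi_m\colon F_m\to D^2$ (via the map $x_{n+1}$) whose regular fiber is symplectomorphic to the disk cotangent bundle $D^*S^n$ and whose vanishing cycles are Lagrangian $n$-spheres, making $F_m$ a Stein filling of $\Sigma_m$. Second, applying Theorem \ref{theorem} to the quadric $M_2\cong Q_n\subset\CP^{n+1}$ (with hyperplane-section hypersurface $H_2\cong Q_{n-1}$) and then cutting as in Section \ref{section: compact fibers} yields a symplectic Lefschetz--Bott fibration $\pi_c\colon E_c\to D^2$ whose total space $E_c$ is the blow-up resolution of the $A_1$-singularity in $\C^{n+2}$, whose regular fiber is symplectomorphic to $D^*S^n$ by Proposition \ref{proposition: relation}, and whose unique Bott critical set is the exceptional quadric.

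Since all pieces have regular fibers symplectomorphic to $D^*S^n$, the symplectic fiber-sum construction for Lefschetz--Bott fibrations applies, and I define
\[
W_\ell \;:=\; F_{k-2\ell}\,\#_f\,\underbrace{E_c\,\#_f\,\cdots\,\#_f\,E_c}_{\ell\ \text{copies}},
\]
which is naturally the total space of a symplectic Lefschetz--Bott fibration $\pi_\ell\colon W_\ell\to D^2$; after smoothing corners as in Appendix \ref{section: smoothing}, $W_\ell$ is a strong symplectic filling of the contact manifold supported by the induced open book on $\del W_\ell$. Topologically, $W_\ell$ is the symplectic manifold obtained from $V_k$ by $\ell$ successive blow-ups of the origin (each introducing an exceptional quadric $Q_n$ and reducing the central $A_j$-singularity to $A_{j-2}$) followed by symplectically smoothing the remaining $A_{k-2\ell}$-singularity; hence $\del W_\ell$ is naturally identified with the link $\Sigma_k$ and the induced open-book contact structure coincides with the canonical Milnor contact structure.

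To show the $W_\ell$ represent $\lceil k/2\rceil$ distinct homotopy types, iterating Lemma \ref{lemma: fibersum} together with $\chi(F_m)=1+m(-1)^{n+1}$, $\chi(D^*S^n)=1+(-1)^n$ and $\chi(E_c)=\chi(Q_n)$ gives
\[
\chi(W_\ell)\;=\;1+k(-1)^{n+1}+\ell\bigl(\chi(Q_n)-1+(-1)^n\bigr),
\]
whose coefficient of $\ell$ is nonzero whenever $n\ge 2$, so Euler characteristic already distinguishes the fillings in those dimensions. For the low-dimensional case $n=1$ where the coefficient degenerates (since $\chi(Q_1)=2$), one instead uses the middle Betti number $b_2(W_\ell)=k-\ell$, which is strictly monotone in $\ell$ because each blow-up contributes one new exceptional $(-2)$-sphere to $H_2$ while smoothing $A_{k-2\ell}$ contributes $k-2\ell$ more. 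The main technical obstacle is the identification asserted in the third paragraph: verifying that the Lefschetz--Bott open book on $\del W_\ell$ supports, as cooriented contact structure, the canonical Milnor contact structure on $\Sigma_k$. This reduces to matching the Liouville structure produced by the fiber-sum construction with the one arising from the partial-resolution description near the boundary, which is essentially the content of Appendix \ref{section: smoothing}.
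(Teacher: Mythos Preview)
Your construction and Euler-characteristic computation essentially match the paper's proof: the paper also builds, for each $\ell$, a symplectic Lefschetz--Bott fibration with fiber $DT^*S^n$ whose collection of vanishing cycles consists of $\ell$ copies of $\del(DT^*S^n)$ and $k+1-2\ell$ copies of the zero-section $S_0$; this is precisely your $W_\ell$ expressed in fiber-sum language. The paper identifies the boundary contact manifold more directly than you do: since the fibered Dehn twist $\tau_\del$ is isotopic to $\tau_{S_0}^2$ (Proposition~\ref{proposition: relation}, from \cite{AA}), the monodromy of every fibration is $\tau_{S_0}^{k+1}$, and one then invokes \cite[Theorem~6.5]{CDvK} to conclude that $OB(DT^*S^n,\lambda;\tau_{S_0}^{k+1})$ is contactomorphic to $\Sigma_k$. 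This bypasses the partial-resolution matching you flag as the ``main technical obstacle.''

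There is, however, a genuine gap in your $n=1$ fallback. Your claim $b_2(W_\ell)=k-\ell$ is incorrect: in the Mayer--Vietoris sequence for the fiber sum, the core circle of the annulus fiber bounds in \emph{both} summands (a Lefschetz thimble on one side, and a disk in the simply connected $E_c$ on the other), so the map $H_1(F)\to H_1(A)\oplus H_1(B)$ vanishes and each gluing contributes an \emph{extra} class to $H_2$. Iterating gives $b_2(W_\ell)=(k-2\ell)+\ell\cdot 1+\ell\cdot 1=k$ for every $\ell$. Geometrically this is no surprise: for ADE \emph{surface} singularities, Brieskorn's simultaneous resolution says that any partial resolution followed by smoothing is diffeomorphic to the Milnor fiber, so the $W_\ell$ really are mutually diffeomorphic when $n=1$. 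Thus neither $\chi$ nor $b_2$ distinguishes them in that dimension, and your argument does not establish the proposition for $n=1$. Your Euler-characteristic formula $\chi(W_\ell)=1+k(-1)^{n+1}+\ell\bigl(\chi(Q_n)-1+(-1)^n\bigr)$ is in fact the correct one, with the coefficient nonzero precisely for $n\ge 2$; the paper's displayed computation interchanges the values of $\chi(E_1)$ and $\chi(E_2)$, which accidentally hides this same issue.
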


\begin{proof}
Let $DT^{*}S^{n}$ be a disk cotangent bundle over the sphere $S^{n}$. 
This bundle can be considered as the complement of a neigborhood of an $(n-1)$-dimensional smooth quadric $Q^{n-1}$ in 
an $n$-dimensional smooth quadric $Q^{n} \subset \CP^{n+1}$. 
Hence we equip $DT^{*}S^{n}$ with a symplectic structure $\omega$ derived from restricting the 
Fubini-Study form on $Q^{n-1}$, 
where $\omega$ can be written explicitly as 
$$\omega=-dd^{\C}\log \left(\sum_{j=0}^{n}|z_{j}|^2+1 \right).$$
Let $\lambda= -d^{\C}\log(\sum_{j=0}^{n}|z_{j}|^2+1)$ and $S_{0}$ 
the zero-section of $DT^{*}S^{n}$, and 
consider the open book $(DT^{*}S^{n}, \lambda; \tau_{S_{0}}^{k+1})$. 
Then, according to \cite[Theorem 6.5]{CDvK}, 
the contact manifold 
$OB(DT^{*}S^{n}, \lambda; \tau_{S_{0}}^{k+1})$ is contactomorphic to 
the Boothby-Wang orbibundle over a symplectic orbifold. 
In fact, it is contactomorphic to the link $\Sigma_{k}$.

To construct strong symplectic fillings of $\Sigma_{k}$, 
for $\ell=1,\cdots, \lceil k/2 \rceil$ consider a symplectic Lefschetz-Bott fibration whose fibers are $(DT^{*}S^{n},\lambda)$ and 
whose collection of vanishing cycles are 
$$
( \underbrace{\del DT^{*}S^{n},\ldots,\del DT^{*}S^{n}}_{\ell}, 
\underbrace{S_{0},\ldots,S_{0}}_{k+1-2\ell} ).
$$
Let us denote by $X_{\ell}$ the symplectic manifold associated to the fibration. 
Since the fibered Dehn twist $\tau_{\del}$ along $\del DT^{*}S^{n}$ 
is symplectically isotopic to $\tau_{S_{0}}^{2}$, 
the monodromy of the open book induced by each fibration is symplectically isotopic to $\tau_{S_{0}}^{k+1}$. 
Therefore, $X_{\ell}$ is a strong symplectic filling of $\Sigma_{k}$. 

Now we shall compute the Euler characteristic $\chi(X_{\ell})$ to distinguish our fillings.
Let $E_{1}$ (resp. $E_{2}$) be the total space of a symplectic Lefschetz-Bott fibration whose fibers are 
$(DT^{*}S^{n}, \omega)$ and 
whose monodromy is $\tau_{\del}$ (resp. $\tau_{S_{0}}$).
Note that $E_{1}$ is diffeomorphic to a disk bundle over a quadric $Q^{n}$, 
and $E_{2}$ is diffeomorphic to a $(2n+2)$-dimensional disk. 
Moreover, it is known that 
$\chi(E_{2})= \chi(Q^n) = ((-1)^{n+1}-1)/2+n+1$ (see \cite[Exercise 5.3.7 (i)]{Dimca}).
Since $X_{\ell}$ can be regarded as the fiber sum of $\ell$ copies of $E_{1}$ and $k+1-2\ell$ copies of $E_{2}$, 
applying Lemma \ref{lemma: fibersum} repeatedly, 
we have 
\begin{eqnarray*}
	\chi(X_{\ell}) & = &\ell \cdot \chi(E_{1})+(k+1-2\ell) \cdot \chi(E_{2})-(k-\ell)\cdot \chi(DT^{*}S^{n}) \\ 
	& = & \{1-2\chi(Q^{n})+\chi(S^{n})\} \ell + 
	(k+1)\cdot\chi(Q^{n})-k \cdot \chi(S^{n})\\
	& = & \{3+2(-1)^n-2(n+1)\} \ell + 
	(k+1)\cdot\chi(Q^{n})-k \cdot \chi(S^{n}). 
	\end{eqnarray*}
Since $3+2(-1)^n-2(n+1)=-2n+1+2(-1)^n\neq 0$, $\chi(X_{\ell})$ is a linear function of $\ell$, which shows that $X_{\ell}$'s are mutually non-homotopic. 
\end{proof}

\begin{remark}\label{remark: ST}
In the above proof, 
$E_{1}$ is diffeomorphic to the total space of a disk bundle over $Q^{n}$, and $E_{2}\#_{f}E_{2}$ is 
diffeomorphic to 
the one of a disk cotangent bundle of $S^{n+1}$. 
In the light of Example \ref{example: hypersurface}, 
they are a resolution and a smoothing of the $A_{1}$-type singularity, respectively. 
Hence, we conclude that our strong symplectic fillings of $\Sigma_{k}$ are obtained by switching successively 
these two spaces originated from the singularity. 
Smith and Thomas introduced a symplectic model of this operation in \cite[Section 3]{ST}.
\end{remark}

\subsection{Resolution of the $A_{k}$-type singularity}

To interpret Proposition \ref{prop: fillings} from the singular theoretical view point in the next subsection, 
here we review a resolution of the $A_{k}$-type singularity. 
%Especially we will resolve this singularity by blowing up the singularity several times. 

Let $V_{k}$ be the variety defined by the polynomial $f$ as above. 
Set $X_{0}=\C^{n+1}$ and $V_{k}^{(0)}=V_{k}$. Consider the blow-up 
$\tilde{\tau}_{1}: X_{1}\cong \mathcal{O}(-1) \rightarrow X_{0}$ of $X_{0}$ at $0$ and let $V_{k}^{(1)}$ denote the proper transform of $V_{k}^{(0)}$. Here, 
$$
	X_{1} =\{(x,[z]) \in \C^{n+1} \times \CP^{n} \mid x_{i}z_{j}-x_{j}z_{i}=0 \ \text{for any}\ i, j \}.
$$
%Let us see $V_{k}^{(1)}$ before the exceptional set of $\tau_{1}= \tilde{\tau}_{1}|_{V_{k}^{(1)}}: V_{k}^{(1)} \rightarrow V_{k}^{(0)}$. 
%To do this, take coordinates of $X_{1}$ as follows. 
Letting $U_{1,j}=\{ (x,[z]) \in X_{1} \mid z_{j} \neq 0 \}$, we have $X_{1}= \cup_{j}U_{1,j}$.
On each $U_{1,j}$, we take the coordinate function $\varphi_{1,j}=(y_{0}, \ldots, y_{n})$ defined by 
$$
y_{i} = \begin{cases}
		z_{i}/z_{j} & (i \neq j), \\
		x_{j} & (i=j).
		\end{cases}
$$
Then, the preimage $\tilde{\tau}_{1}^{-1}(V_{k}^{(0)})$ is given by the following equations: 
\begin{eqnarray*}
	& y_{j}^{2}(y_{0}^{2}+ \cdots + y_{j-1}^{2} + 1+ y_{j+1}^{2}+ \cdots + y_{j}^{k-1}y_{n}^{k+1}) = 0 & \text{on}\ U_{1,j}\  (j\neq n), \\
	& y_{n}^{2}(y_{0}^{2}+ \cdots y_{n-1}^{2}+ y_{n}^{k-1}) = 0 & \text{on}\ U_{1,n}.
\end{eqnarray*} 
Since the points of $\varphi_{1,j}(U_{1,j}) \subset \C^{n+1}$ with $y_{j}=0$ 
correspond to points of the exceptional set $E_{1}=\{ (0,[z]) \in X_{1}\}$ of $\tilde{\tau}_{1}$, 
the proper transform $V_{k}^{(1)}$ is given by 
\begin{eqnarray}
	& y_{0}^{2}+ \cdots + y_{j-1}^{2} + 1+ y_{j+1}^{2}+ \cdots + y_{j}^{k-1}y_{n}^{k+1} = 0 & \text{on}\ U_{1,j}\  (j\neq n),\nonumber \\%\label{eqn1} \\
	& y_{0}^{2}+ \cdots + y_{n-1}^{2}+ y_{n}^{k-1} = 0 & \text{on}\ U_{1,n} \label{eqn2}, 
\end{eqnarray} 
which is equivalent to the set  
$$
	V_{k}^{(1)}=\{ (x,[z]) \in X_{1}\, | \, z_{0}^2 + \cdots + z_{n-1}^{2} + x_{n}^{k-1}z_{n}^{2}=0\}.
$$
If $k=1,2$, $V_{k}^{(1)}$ is non-singular. 
%In the case $k=1$, the exceptional set $V_{k}^{(1)} \cap E_{1}$ of $\tau_{1}$ is given by 
%$\{u_{0}^2 + \cdots + u_{n}^{2}=0 \} \subset \CP^{n}$, which is a smooth quadric $Q^{n-1}$ in $\CP^{n}$.  
%In the case $k=2$, the exceptional set is given by 
%$\{u_{0}^2 + \cdots + u_{n-1}^{2} =0 \} \subset \CP^{n}$, 
%which is the cone over a smooth quadric $Q^{n-2}$ in $\CP^{n-1}$ with vertex $p=(0: \ldots:0:1) \in \CP^{n}$, 
%denoted by $\overline{Q^{n-2},p}$.
%In the case $k>2$, the exceptional set is the same as the case $k=2$. 
In the case $k>2$, it has a unique singularity at $p_{1}=(0,[0:\ldots:0:1]) \in V_{k}^{(1)}$. 
On $U_{1,n}$, the defining equation of $V_{k}^{(1)}$ is given by the equation (\ref{eqn2}), and 
it turns out that the singularity is nothing but of type $A_{k-2}$.  
Thus, we can resolve the singularity by taking the blow-up $\tau_{2}: V_{k}^{(2)} \rightarrow V_{k}^{(1)}$ of $V_{k}^{(1)}$ at the singularity as we did above. 
Thus, by induction on $k$, we conclude that the procedure of $ \lceil k/2 \rceil$ times successive blowing up resolves the initial $A_{k}$-type singularity. 
where $\lceil \, \cdot \, \rceil$ denotes the ceiling function.

\subsection{Lefschetz-Bott fibrations and blowing up}

Let $V_{k}$ be the variety $\{ f=0 \}$ as above and 
$\tau: \tilde{V}_{k} \rightarrow V_{k}$ the restriction of the blow-up at $0\in \C^{n+2}$. 
Although the resulting space $\tilde{V}_{k}$ still has a singularity in general, 
we can smooth it and denote its smoothing by $\tilde{V}_{k, \epsilon}$. 
As a model of this smoothing, we take the union of the following two spaces: 
$$
	U := \{ ((x_{0}, \ldots, x_{n},0), [z_{0}:\ldots:z_{n}:0]) \in \tilde{V}_{k}\mid z_{0}^2 +\cdots + z_{n}^{2}=0 \} \subset \C^{n+2} \times \CP^{n+1},
$$
$$
	W_{\epsilon} = \varphi_{1,n+1}^{-1} (\{(y_{0}, \ldots,y_{n+1}) \in \C^{n+2} \mid 
	y_{0}^{2}+\cdots+y_{n-1}^{2}+y_{n+1}^{k-1}= \epsilon \rho(\|y\|^2) \}), 
$$
where $\rho:\R_{\geq 0} \rightarrow \R$ is a smooth, monotonically increasing function such that $\rho(t) \equiv 1$ for $t< \epsilon$ and 
$\rho(t) \equiv 0$ for $t>1$. 
Note that $U$ is the total space of a line bundle over $Q^{n-1}$ and in particular smooth. 
According to \cite[Proposition 2.3]{Fauck}, $W_{\epsilon}$ is smooth, and so is $\tilde{V}_{k,\epsilon}$.
It is well known that 
$V_{k,\epsilon}=\{x \in \C^{n+2}\mid f(x)=\epsilon\}$ admits a Lefschetz fibration 
$\pi: V_{k,\epsilon} \rightarrow \C$ defined by the projection to the last coordinate: 
$$
	\pi(x_{0},\ldots, x_{n+1})=x_{n+1}. 
$$
The collection of its vanishing cycles is given by $(k+1)$-tuple $(S_{0}, \ldots, S_{0})$ of the zero-section 
of the cotangent bundle of $S^n$ with respect to some distinguished basis.
To state the following lemma and proposition, we recall one notion. 
Let $\pi_{i}: E_{i} \rightarrow \C$ ($i=1,2$) be smooth Lefschetz-Bott fibrations and 
$(C_{i,1}, \ldots, C_{i,k_{i}})$ ($i=1,2$) the ordered collections of vanishing cycles of $\pi_{i}$ for some distinguished basis. 
The two fibrations are said to be \textit{related by a monodromy substitution} if 
there exists an integer $j_{0}$ with $1 \leq j_{0} \leq \min\{ k_{1}, k_{2}\}$ such that 
$C_{1,j} = C_{2,j}$ for any $j<  j_{0}$, and
$\tau_{C_{1,j_{0}}} \circ \tau_{C_{1,j_{0}+1}}  \cdots \circ \tau_{C_{1,k_{1}}}$ and 
$\tau_{C_{2,j_{0}}} \circ \tau_{C_{2,j_{0}+1}} \circ \cdots \circ \tau_{C_{2,k_{2}}}$ are smoothly isotopic. 
Note that the products $\tau_{C_{1,1}} \circ \tau_{C_{1,2}} \circ  \cdots \circ \tau_{C_{1,k_{1}}}$ and 
$\tau_{C_{2,1}} \circ \tau_{C_{2,2}} \circ \cdots \circ \tau_{C_{2,k_{2}}}$ are smoothly isotopic.

\begin{lemma}\label{lemma: blow up}
The smoothing $\tilde{V}_{k,\epsilon}$ admits a smooth Lefschetz-Bott fibration that 
is related to $\pi$ by a monodromy substitution.
\end{lemma}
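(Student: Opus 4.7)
The plan is to identify the smoothing $\tilde{V}_{k,\epsilon}$ with the symplectic manifold $X_1$ constructed in the proof of Proposition \ref{prop: fillings} and then appeal to the Lefschetz-Bott fibration carried by $X_1$. Recall that $X_1$ is the fiber sum of one copy of $E_1$ (a disk subbundle of $\mathcal{O}(-1)|_{Q^n}$, realized as a resolution of the $A_1$-singularity) with $k-1$ copies of $E_2$ (each the Milnor fiber of the $A_1$-singularity, i.e., a disk), and it carries a symplectic Lefschetz-Bott fibration over $\D$ with regular fiber $DT^*S^n$ and collection of vanishing cycles made of one Bott cycle $\partial DT^*S^n$ together with $k-1$ copies of the zero-section $S_0$.

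The geometric identification $\tilde{V}_{k,\epsilon} \cong X_1$ stems directly from the blow-up-and-smoothing construction: $\tilde{V}_k$ is a resolution of the original $A_k$-singularity that replaces the origin by a quadric cone whose vertex is the remaining $A_{k-2}$-singularity, and smoothing this $A_{k-2}$-singularity locally by its Milnor fiber yields exactly the fiber sum of $E_1$ (capturing the smoothed exceptional divisor $Q^n$ together with its normal bundle $\mathcal{O}(-1)$) with $k-1$ copies of $E_2$ (capturing the $k-1$ Lagrangian vanishing spheres produced by the $A_{k-2}$ smoothing). The paper's decomposition $U \cup W_\epsilon$ provides the local models for this fiber-sum decomposition.

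Granted this identification, the desired Lefschetz-Bott fibration $\tilde{\pi}: \tilde{V}_{k,\epsilon} \to \D$ is simply the one already constructed on $X_1$. To verify the monodromy substitution relative to the collection $(S_0,\ldots,S_0)$ (of $k+1$ copies) for $\pi$, I arrange the vanishing cycles of $\tilde{\pi}$ in the order $(S_0,\ldots,S_0,\partial DT^*S^n)$ (placing the $k-1$ copies of $S_0$ first and the Bott cycle last) and take $j_0=k$. Then the first $k-1$ cycles of both fibrations agree, and the remaining Dehn-twist products are $\tau_{S_0}^2$ (for $\pi$) and $\tau_{\partial DT^*S^n}$ (for $\tilde{\pi}$); these are smoothly, indeed symplectically, isotopic by the Biran--Giroux-type relation, i.e., the $d=2$ case of Theorem \ref{thm: AA}.

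The main obstacle is the geometric identification $\tilde{V}_{k,\epsilon} \cong X_1$; once this is in place, the remaining steps are purely formal, as both the Lefschetz-Bott structure and the monodromy substitution follow immediately from the construction in Proposition \ref{prop: fillings}. I would establish this identification by matching $E_1$ with a tubular neighborhood of the smoothed exceptional divisor in $\tilde{V}_{k,\epsilon}$, matching the $k-1$ copies of $E_2$ with local Milnor fibers of the $k-1$ nodes arising from the $A_{k-2}$ smoothing, and arguing that the collar-neighborhood gluings of the fiber sum coincide with the transition regions between these pieces in the blow-up-and-smoothing model.
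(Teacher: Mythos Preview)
Your strategy differs from the paper's direct approach: rather than identifying $\tilde V_{k,\epsilon}$ with the abstract fiber sum $X_1$, the paper writes down the explicit map $\tilde\pi\bigl((x,[z])\bigr)=x_{n+1}$ and checks that on $W_\epsilon$ it restricts to the $A_{k-2}$ Lefschetz fibration (contributing $k-1$ Morse critical points), while on a neighborhood of $U$ it is Lefschetz--Bott with critical locus the quadric $\{x=0\}\cong Q^{n-1}$. The monodromy sequence $(\tau_{S_0},\ldots,\tau_{S_0},\tau_\partial)$ and the substitution via $\tau_\partial\simeq\tau_{S_0}^2$ then follow immediately.

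Your route has a genuine gap at the identification $\tilde V_{k,\epsilon}\cong X_1$. The fiber sum is defined \emph{through} the fibration structure (pieces are glued along regular fibers), so exhibiting $\tilde V_{k,\epsilon}$ as $E_1\#_f E_2^{\#_f(k-1)}$ already presupposes a Lefschetz--Bott fibration on it. Your sketch of matching $E_1$ with a neighborhood of the exceptional locus and the $E_2$'s with local Milnor pieces does not circumvent this: to verify that such a neighborhood carries a single Bott-type singular fiber and that the collar gluings are fiberwise, you must analyze a map like $x_{n+1}$ on each region, which is exactly the paper's computation. Two further specific issues: first, the decomposition $\tilde V_{k,\epsilon}=U\cup W_\epsilon$ is not of fiber-sum type, since $U$ is the codimension-one locus $\{z_{n+1}=0\}$ (a line bundle over $Q^{n-1}$, of complex dimension $n$), whereas each summand in a fiber sum has full dimension $n+1$. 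Second, there is no evident ``smoothed exceptional divisor $Q^n$'' inside $\tilde V_{k,\epsilon}$: the exceptional set of $\tilde V_k$ is the \emph{singular} quadric cone $\{z_0^2+\cdots+z_n^2=0\}\subset\CP^{n+1}$ with vertex at the $A_{k-2}$ point, and smoothing the ambient singularity does not by itself produce a smooth $Q^n$ with normal bundle $\mathcal O(-1)$. Once you supply the fibration needed to make your identification rigorous, you have essentially reproduced the paper's argument.
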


\begin{proof}
Define a map $\tilde{\pi}:\tilde{V}_{k,\epsilon} \rightarrow \C$ by 
$$
	\tilde{\pi}((x_{0},\ldots,x_{n+1}),[z_{0}:\ldots:z_{n+1}])=x_{n+1}.
$$
On $W_{\epsilon}$, the map $\tilde{\pi}$ restricts to a Lefschetz fibration. 
To see this, 
consider $$\tilde{\pi} \circ \varphi_{1,n+1}^{-1}: \varphi_{1,n+1}(W_{\epsilon}) \rightarrow \C, \ (y_{0},\cdots, y_{n+1}) \mapsto y_{n+1}.$$  
By the definition of $W_{\epsilon}$, we see that this map is a smooth Lefschetz fibration, 
which is similar to $\pi$. 

%We claim that $\tilde{\pi}|_{U}$ is a smooth Lefschetz-Bott fibration. 
A straightforward computation using local coordinates shows that on a neighborhood of $U$, $\tilde{\pi}$ is a smooth Lefschetz-Bott fibration whose 
critical point set is $\{x=0\} \subset U$, isomorphic to a quadric. 
It turns out that the monodromy sequence of $\tilde{\pi}$ with respect to some distinguished basis is given by 
$$
	(\tau_{S_{0}}, \ldots, \tau_{S_{0}}, \tau_{\del}) 
$$
that consists of $k-1$ Dehn twists along the zero-section of $T^{*}S^{n}$ and a fibered Dehn twist. 
The automorphism $\tau_{\del}$ is symplectically, in particular smoothly isotopic to $\tau_{S_{0}} \circ \tau_{S_{0}}$, 
which completes the proof.
\end{proof}

%\begin{remark}
%The fibration $\tilde{\pi}$ given in the proof of the theorem is motivated by the projection $V_{k} \rightarrow \C$, $x\mapsto x_{n+1}$.
%\end{remark}

\begin{proposition}\label{prop: resolution}
A resolution of the $A_{k}$-type singularity admits a smooth Lefschetz-Bott fibration which are 
related to the Lefschetz fibration $\pi$ on a Milnor fiber of the $A_{k}$-type singularity by a sequence of monodromy substitutions. 
\end{proposition}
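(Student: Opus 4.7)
The plan is to prove this by induction on $k$, using Lemma \ref{lemma: blow up} as both the base case and the key inductive step, with the induction mirroring the $\lceil k/2 \rceil$-step successive blow-up procedure described in the previous subsection.

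For the base case $k=1$ or $k=2$, the first blow-up already resolves the $A_k$-type singularity, so Lemma \ref{lemma: blow up} directly furnishes a smooth Lefschetz-Bott fibration on $\tilde{V}_{k,\epsilon}$ related to $\pi$ by a single monodromy substitution.

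For $k \geq 3$, first apply Lemma \ref{lemma: blow up} to produce a smooth Lefschetz-Bott fibration $\tilde{\pi}$ on $\tilde{V}_{k,\epsilon}$ whose monodromy sequence is $(\tau_{S_{0}}, \ldots, \tau_{S_{0}}, \tau_{\del})$ consisting of $k-1$ Dehn twists along the zero-section followed by a fibered Dehn twist. This is related to $\pi$, whose monodromy sequence is $k+1$ copies of $\tau_{S_{0}}$, by one monodromy substitution. The chart $U_{1,n+1}$ exhibits the remaining singularity at $p_{1}\in V_{k}^{(1)}$ as one of type $A_{k-2}$, and the piece $W_{\epsilon}\subset \tilde{V}_{k,\epsilon}$ is, via $\varphi_{1,n+1}$, precisely a Milnor-fiber-type smoothing of this $A_{k-2}$ singularity on which $\tilde{\pi}$ restricts to the projection-to-the-last-coordinate Lefschetz fibration. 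By the inductive hypothesis applied to $A_{k-2}$, a resolution of that singularity admits a smooth Lefschetz-Bott fibration related to the local Lefschetz fibration by a sequence of monodromy substitutions.

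To finish, I would graft this inductively constructed fibration into the global picture by replacing the local piece $W_{\epsilon}$ with the corresponding resolution endowed with its inductive Lefschetz-Bott structure, while keeping the $U$ part intact (together with its critical locus $\{x=0\}\subset U$, a quadric, which contributes the $\tau_{\del}$ factor). The resulting smooth manifold is a resolution of the $A_{k}$-type singularity, inheriting a Lefschetz-Bott fibration whose monodromy sequence is obtained from $\tilde{\pi}$'s sequence by applying the substitutions from the induction. Composing these with the initial substitution from Lemma \ref{lemma: blow up} produces the required sequence of monodromy substitutions relating the final fibration to $\pi$, and the total number of substitutions is exactly $\lceil k/2 \rceil$, matching the number of blow-ups in the resolution.

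The hard part will be the gluing step: one must verify that the inductively obtained Lefschetz-Bott fibration on the resolution of the local $A_{k-2}$ singularity agrees with $\tilde{\pi}|_{W_{\epsilon}}$ on the overlap with (the complement of) $U$, so that the union is a bona fide smooth Lefschetz-Bott fibration and the distinguished bases of vanishing paths on the two pieces combine coherently into one on the glued space. This compatibility is controlled by the horizontal triviality of Lefschetz-Bott fibrations near $\del_{h}$ together with the explicit local model for $\tilde{\pi}$ on $W_{\epsilon}$ supplied by Lemma \ref{lemma: blow up}, so it should be checkable from the construction, but keeping track of the orderings of vanishing cycles throughout the induction is where the bookkeeping is most delicate.
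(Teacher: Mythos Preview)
Your proposal is correct and follows essentially the same approach as the paper: both argue by induction along the chain of blow-ups, applying Lemma~\ref{lemma: blow up} at each step to replace the local Lefschetz fibration on a smoothing of the remaining $A_{k-2j}$ singularity by a Lefschetz-Bott fibration on its one-step blow-up, yielding $\lceil k/2\rceil$ monodromy substitutions in total. The paper's proof is terser and does not spell out the gluing compatibility you flag as the ``hard part''; your observation that horizontal triviality and the explicit local model on $W_{\epsilon}$ control this compatibility is exactly what is implicitly being used.
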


\begin{proof}
As we saw in the previous subsection, 
a resolution of the $A_{k}$-type singularity is obtained by a sequence of successive blow-ups: 
$$
	V_{k}^{(\lceil k/2 \rceil)} \xrightarrow{\tau_{\lceil k/2 \rceil}} V_{k}^{(\lceil k/2 \rceil-1)} 
	\xrightarrow{\tau_{\lceil k/2 \rceil -1}} \cdots \xrightarrow{\tau_{1}} V_{k}^{(0)}=V_{k}.
$$
Here, 
each $\tau_{j}$ is the blow-up at the $A_{k-2(j-1)}$-type singularity in $V_{k}^{(j-1)}$. 
Applying Lemma \ref{lemma: blow up} to each blow-up $\tau_{j}: V_{k}^{(j)} \rightarrow V_{k}^{(j-1)}$, 
we show that the smoothing $V_{k,\epsilon}^{(j)}$ admits a smooth 
Lefschetz-Bott fibration that is related to one on $V_{k,\epsilon}^{(j-1)}$. 
Therefore, inductively we conclude the proposition. 
\end{proof}

%The following result is well-known for Lefschetz fibrations. 
%In fact, it also holds for Lefschetz-Bott fibrations. 

%\begin{proposition}
%Let $\pi: E \rightarrow D^2$ be a Lefschetz-Bott fibration with two critical values. 
%suppose two vanishing cycles  $C_{1}, C_{2}$ with respect to a distinguished basis are disjoint. 
%Then, the total space $E$ is diffeomorphic to the one of a Lefschetz-Bott fibration 
%with a unique singular fiber which contains critical sets corresponding to $C_{1}, C_{2}$.
%\end{proposition}

\appendix
\section{Lefschetz-Bott fibrations and fillings of contact manifolds}\label{section: smoothing}

In this appendix, we show that 
the total space of a symplectic Lefschetz-Bott fibration over the unit disk $\D$ serves as a strong symplectic filling of a contact manifold 
supported by an open book induced by the fibration. 
To prove this, we mainly follow a recent paper of Lisi, Van Horn-Morris and Wendl \cite{LVHMW} where they discussed a relation between 
$4$-dimensional Lefschetz fibrations and various symplectic fillings of contact $3$-manifolds. 

\subsection{Open books}\label{section: open book}

Let $V$ be a $2n$-dimensional compact smooth manifold with boundary equipped with an 
exact symplectic form $\omega=d\lambda$ such that 
the Liouville vector field of $\lambda$ is transverse to $\del V$ and points outwards along $\del V$. 
Then, the symplectic manifold $(V, d\lambda)$ is called a \textit{Liouville domain.}

\begin{definition}
An \textit{abstract open book} $(V,\lambda; \phi)$ consists of a Liouville domain $(V, d\lambda)$ and 
its compactly supported symplectomorphism $\phi \in \Symp_{c}(V, d\lambda)$. 
\end{definition}

Let us review that an abstract open book $(V, \lambda; \phi)$ provides a contact manifold. 
According to \cite[Section 7.3]{Geiges}, %(see also \cite[Section 2.11]{vanKoert}), 
the symplectomorphism $\phi$ is assumed to be compactly supported and exact, i.e., 
$\phi^{*}\lambda - \lambda=dH$ for some positive function $H$ (\cite[Lemma 7.3.4]{Geiges}). 
Note that since $\phi$ is compactly supported, $H$ is constant near $\del V$. 
We assume $H=2\pi$ near $\del V$.
Define the mapping cylinder $V_{H}(\phi)$ by 
$$
	V_{H}(\phi) = (V \times \R) / (\phi(x), \varphi+H(x)) \sim (x, \varphi). 
$$
Let $D^{2}(a)=\{ z \in \C \mid |z|\leq a\}$ and $A(a,b)=\{ z \in \C \mid a \leq |z|\leq b\}$. 
Identifying a collar neighborhood of $\del V_{H}(\phi)$ with $[-\epsilon, 0] \times \del V \times S^1$, 
we define a manifold by 
$$
	M(V, \lambda; \phi) = V_{H}(\phi) \cup_{\Phi} (\del V \times D^2(1+\epsilon)), 
$$
where $\Phi: \del V \times A(1,1+\epsilon) \rightarrow [-\epsilon, 0] \times \del V \times S^{1} \subset V_{H}(\phi)$ is 
the diffeomorphism given by 
\begin{eqnarray}\label{eqn: glue}
	\Phi: (x,re^{2\pi i \varphi}) \mapsto (1-r, x, \varphi).
\end{eqnarray}
The manifold $M(V, \lambda; \phi)$ is what we shall equip with a contact structure. 
The $1$-form $\alpha=\lambda+Kd\varphi$ for an arbitrary constant $K>0$ defines a contact structure on $V_{H}(\phi)$. 
On the collar neighborhood $[-\epsilon, 0] \times \del V \times S^1$ of $\del V_{H}(\phi)$, 
 $\alpha$ can be written as $e^{r}\lambda_{\del V}+Kd\varphi$. 
Here, $\lambda_{\del V}$ is the pull-back of $\lambda$ by the inclusion $\del V \hookrightarrow V$.
What is left is to extend $\alpha$ to $\del V \times D^2(1+\epsilon)$. 
Let us define a $1$-form on $\del V \times D^2(1+\epsilon)$ by 
$$
	\beta=f(r)\lambda_{\del V}+g(r)d\varphi,
$$ 
where $f(r)$, $g(r):[0,1+\epsilon] \rightarrow \R$ are smooth functions specified later.
Simple computations give 
${\Phi}^{*}(\alpha)=e^{1-r}\lambda_{\del V}+Kd\varphi$ and 
$$
	\beta \wedge (d\beta)^{n} = 
	nf(r)^{n-1}(f(r)g'(r)-f'(r)g(r)) \lambda_{\del V}\wedge (d\lambda_{\del V})^{n-1} \wedge dr \wedge d\varphi. 
$$
Thus, for $\beta$ being a contact form, the functions $f$ and $g$ satisfy the following conditions (see Figure \ref{fig: graph contact}): 
\begin{itemize}
\item $f(r)=C_{0}$ and $g(r)=C_{1}r^2$ for $r \leq \epsilon/2$, where $C_{0}>1$ and $C_{1}>0$ are constants; 
\item $f(r)=e^{1-r}$ and $g(r)=K$ for $r \geq 1$;
\item $(f(r), g(r))$ is never parallel to its tangent vector $(f'(r), g'(r))$ for $r \neq 0$.
\end{itemize}
Taking such functions $f$ and $g$, 
we obtain a contact form and hence a contact structure on $M(V, \lambda; \varphi)$. 
We denote the resulting contact manifold by $OB(V,\lambda; \phi)$. 

\begin{figure}[bht]
	\vspace{20pt}
	\centering
	\begin{overpic}[width=350pt,clip]{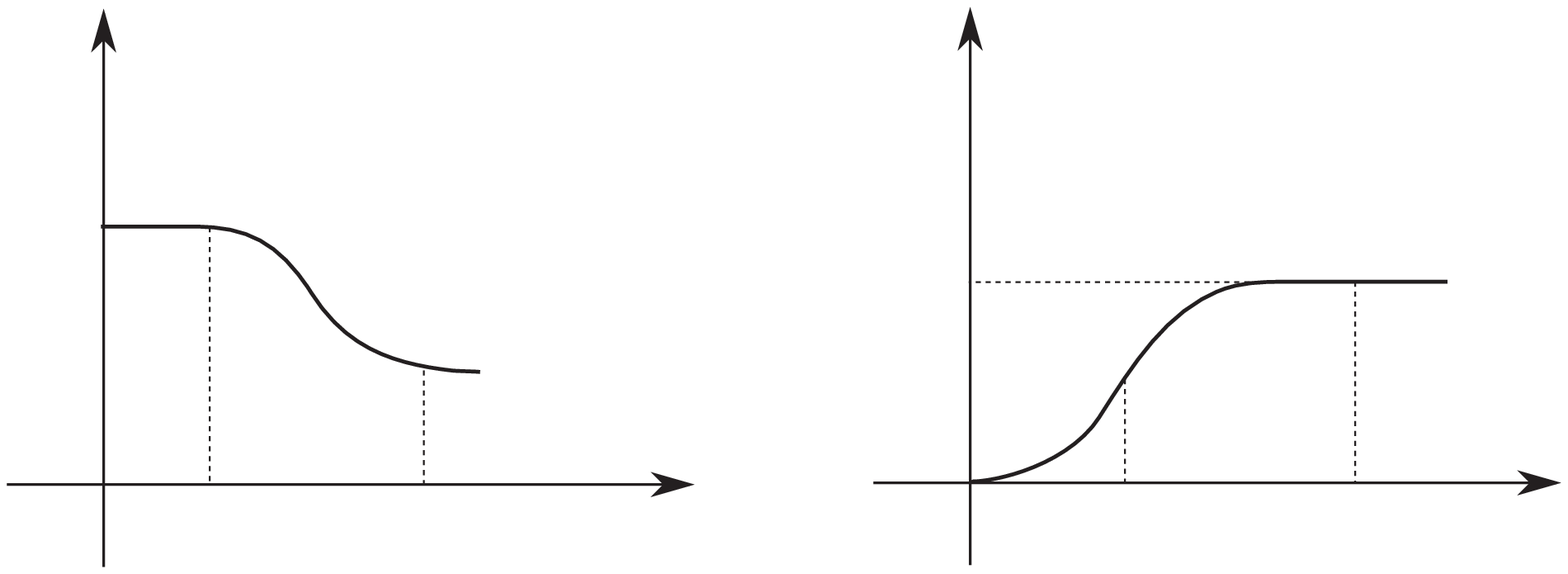}
	 \put(150,12){$r$}
	 \put(3,115){$f(r)$}
	 \put(43,10){$\epsilon/2$}
	 \put(95,10){$1$}
	 \put(13,75){$C_{0}$}
	 \put(196,115){$g(r)$}
	 \put(343,12){$r$}
	 \put(207,62){$K$}
	  \put(245,10){$\epsilon/2$}
	 \put(300,10){$1$}
		\end{overpic}
	\caption{}
	\label{fig: graph contact}
\end{figure}

The following lemma follows easily from the above construction. 

\begin{lemma}\label{lemma: open book}
Let $(V,\lambda; \phi)$ be an abstract open book. 
Suppose $\phi$ is the identity map on a (closed) collar neighborhood $\nu_{V} (\del V)$ of $\del V$. 
Then, $OB(V\setminus \mathring{\nu}_{V}(\del V), \lambda|; \phi|)$ is contactomorphic to 
$OB(V,\lambda; \phi)$, where $\lambda|$ and $\phi|$ are the restrictions of $\lambda$ and $\phi$ to 
$V \setminus \mathring{\nu}_{V}(\del V)$, respectively. 
\end{lemma}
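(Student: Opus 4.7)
The plan is to absorb the collar $\nu_V(\del V)$, on which $\phi$ acts as the identity, into the binding neighborhood of $OB(V,\lambda;\phi)$ and then recognize the result as $OB(V',\lambda|;\phi|)$ with $V' = V\setminus \mathring\nu_V(\del V)$. Parametrize $\nu_V(\del V) = [-\epsilon,0]_t\times\del V$ with $\lambda = e^t\lambda_{\del V}$, so that $\lambda|_{\del V'} = e^{-\epsilon}\lambda_{\del V}$. Since $\phi|_{\nu_V(\del V)} = \mathrm{id}$ and $H\equiv 2\pi$ near $\del V$, the mapping cylinder decomposes as
\[
V_H(\phi) \;=\; V'_H(\phi|) \;\cup\; \bigl([-\epsilon,0]\times\del V\times S^1\bigr),
\]
glued along $\{-\epsilon\}\times\del V\times S^1 = \del V'_H(\phi|)$. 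Under $\Phi$ of \eqref{eqn: glue}, the substitution $t = 1-r$ identifies the product cylinder with the annular region $\del V\times A(1,1+\epsilon)$ of the binding neighborhood, so
\[
OB(V,\lambda;\phi) \;=\; V'_H(\phi|) \;\cup_{\tilde\Phi}\; \bigl(\del V\times D^2(1+\epsilon)\bigr),
\]
with $\tilde\Phi$ identifying $\del V'_H(\phi|)$ with $\del V\times\{|z|=1+\epsilon\}$.

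Next I verify that the contact form assembles coherently on the enlarged binding neighborhood. The mapping-cylinder form $\lambda+K d\varphi$ on the collar reads $e^t\lambda_{\del V}+K d\varphi$, which under $\Phi$ becomes $e^{1-r}\lambda_{\del V}+K d\varphi$ on $\del V\times A(1,1+\epsilon)$; by the admissibility requirement $f(r)=e^{1-r}$, $g(r)=K$ for $r$ near $1$, this matches smoothly with $f(r)\lambda_{\del V}+g(r)d\varphi$ on $\del V\times D^2(1)$. Defining $(F,G)$ on $[0,1+\epsilon]$ by $(F,G)=(f,g)$ on $[0,1]$ and $(F,G)=(e^{1-r},K)$ on $[1,1+\epsilon]$, one checks immediately that $(F,G)$ satisfies the three admissibility conditions (positivity of $F$, prescribed boundary behavior at $r=0$, and $(F,G)$ not parallel to $(F',G')$) on the full interval. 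At $r = 1+\epsilon$ the form reads $e^{-\epsilon}\lambda_{\del V}+K d\varphi = \lambda|_{\del V'}+K d\varphi$, matching the restriction of $\lambda|+K d\varphi$ to $\del V'_H(\phi|)$. Thus $OB(V,\lambda;\phi)$ is precisely the output of the open-book construction applied to $(V',\lambda|;\phi|)$ with the specific radial pair $(F,G)$ and binding radius $1+\epsilon$.

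To conclude, the contact structure produced by the open-book construction is independent, up to contactomorphism, of the chosen admissible radial pair and binding radius. A radial reparametrization sends $D^2(1+\epsilon)$ to $D^2(1+\epsilon')$ and transforms $(F,G)$ into a pair on $[0,1+\epsilon']$; the space of admissible pairs subject to fixed boundary conditions at $r=0$ and $r=1+\epsilon'$ is connected (one interpolates through admissible pairs via convex combinations, using that the non-parallelism inequality $FG'-F'G>0$ is preserved under a suitably arranged linear interpolation), so the resulting family of contact forms provides a contact isotopy, and Gray's stability theorem yields the desired contactomorphism with any standard realization of $OB(V',\lambda|;\phi|)$. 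The main obstacle is the bookkeeping required to track the boundary conditions through this interpolation; once one fixes a radial reparametrization matching the two admissible pairs at $r=0$ and at the outer radius, the remaining check of the contact condition along the family is routine.
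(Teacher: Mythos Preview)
The paper does not actually supply a proof of this lemma; it simply states that the lemma ``follows easily from the above construction.'' Your argument spells out precisely the natural reasoning the paper has in mind: since $\phi$ is the identity on the collar $\nu_V(\del V)$, the mapping cylinder $V_H(\phi)$ splits off a product piece $[-\epsilon,0]\times\del V\times S^1$, which can be absorbed into the binding neighborhood $\del V\times D^2$; the resulting radial profile $(F,G)$ is still of the admissible type, now for the smaller page $V'$. This is correct and is exactly what the construction in Section~\ref{section: open book} makes routine.

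One small remark on the final paragraph: you invoke convex combinations of admissible pairs $(F,G)$ to argue connectedness, but the condition $FG'-F'G>0$ is not in general preserved under naive linear interpolation of two admissible pairs. Your hedge ``suitably arranged'' suggests you are aware of this. The conclusion is still correct: the space of admissible radial profiles with the given boundary behavior is path-connected (for instance, one may first deform $G$ monotonically to match, and then interpolate $F$ separately, each step preserving the contact condition), so Gray stability applies. With this clarification, the proof is complete and matches the paper's implicit argument.
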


\subsection{Strong symplectic fillings of contact manifolds}

Let $\pi: (E, \Omega) \rightarrow \D$ be a symplectic 
Lefschetz-Bott fibration over the closed unit disk $\D$ such that $\Omega$ is exact near the boundary $\del E$  and 
on each regular fiber of $\pi$. 
%Throughout this subsection, we assume \textit{triviality near the vertical boundary $\del_{v}E$} for $\pi: (E, \Omega) \rightarrow \D$, that is, on a collar neighborhood of $\del_{v}E$, $\pi$ is equivalent to the product map 
%$$
%id \times \pi: [1-\epsilon, 1] \times \del_{v}E  \rightarrow [1-\epsilon, 1] \times \del \D^2
%$$
%for some small $\epsilon>0$ with $\Omega$ identified with $\Omega_{v}+\omega_{b}$
We may assume that $\Omega$ is non-degenerate, i.e., symplectic on $E$ by \cite[Lemma 1.5]{Sei} and also that 
the Liouville vector field of $\lambda$, where $d\lambda=\Omega$, defined near $\del E$ points outwards along $\del E$. 
Horizontal triviality allows us to identify a collar neighborhood $\nu_{E}(\del_{h}E)$ of $\del_{h}E$ with 
$$
	([-2\epsilon, 0] \times \del V \times \D, de^{t}\lambda_{\del V}+d\lambda_{b})
$$
for some small $\epsilon>0$, where $V=\pi^{-1}(1)$, $\lambda_{\del V}=\lambda|_{T{\del V}}$ and $d\lambda_{b}$ is an exact symplectic form on $\D$. 
Moreover, for future use, identify it with 
$$
([-\epsilon, \epsilon] \times \del V' \times \D, de^{t}\lambda_{\del V'}+d\lambda_{b}), 
$$
where $V'=V \setminus (-\epsilon,0] \times \del V$. 
We deform the symplectic structure $\Omega$ so that 
the $1$-form $\lambda_{b}$ is a $1$-form on $\D$ which has the form 
$$h(s)d\varphi,$$  
where
the function $h(s)$ is equal to $Ks^2$ for some constant $K>0$ near $s=0$ and $Ke^{s-1}$ on $[1-\epsilon, \epsilon] \times \del \D \subset \D$, and $h'(s)>0$ except for $s=0$. 
Note that the deformation induces a one-parameter family of contact forms on $\del E$, 
so it does not affect the initial contact structure on $\del E$ up to isotopy.
By definition, 
in a neighborhood of the corners, the symplectic form $de^{t}\lambda_{\del V'}+d\lambda_{b}$ can be written as 
$$
	de^{t}\lambda_{\del V'}+Kde^s d\varphi. 
$$
Hence, the Liouville vector field of $\lambda$ coincides with $\del_{t}+\del_{s}$ on this neigborhood. 
To smooth the corners, first let $\gamma_{\epsilon}$ be the image of a curve 
$[-\epsilon, \epsilon] \rightarrow [-\epsilon, \epsilon] \times [1-\epsilon, 1]$, 
$r \mapsto (f(r), g(r))$ (see Figure \ref{fig: graph of corners}), where $f, g$ satisfies that 
\begin{itemize}
\item $f(r)=\epsilon$ and $g(r)=1+r$ for $r \leq -\epsilon/2$; 
\item $f'(r)<0$ and $g'(r)>0$ for $-\epsilon/2 < r < 0$;
\item $f(r)=-r$ and $g(r)=1$ for $r \geq 0$. 
\end{itemize}
Write $\Gamma_{\epsilon}$ for the component of $([-\epsilon, \epsilon] \times [1-\epsilon, 1])  \setminus \gamma_{\epsilon}$ containing 
the point $(\epsilon,1)$ (see Figure \ref{fig: graph of corners}). 
Define the manifold $E_{\epsilon}$ by 
$$
	E_{\epsilon} := E \setminus \Phi_{h}^{-1}(\{ (s,x,te^{2\pi i \varphi}) \in [-\epsilon, \epsilon] \times \del V' \times \D \mid 
	(s,t) \in \Gamma_{\epsilon}\}), 
$$
which will be a smooth manifold we desired. 
Here, $\Phi_{h}: \nu_{E}(\del_{h}E) \rightarrow [-\epsilon, \epsilon] \times \del V' \times \D$ 
is a trivialization of $\nu_{E}(\del_{h}E)$.

\begin{figure}[t]
	\centering
	\begin{overpic}[width=180pt,clip]{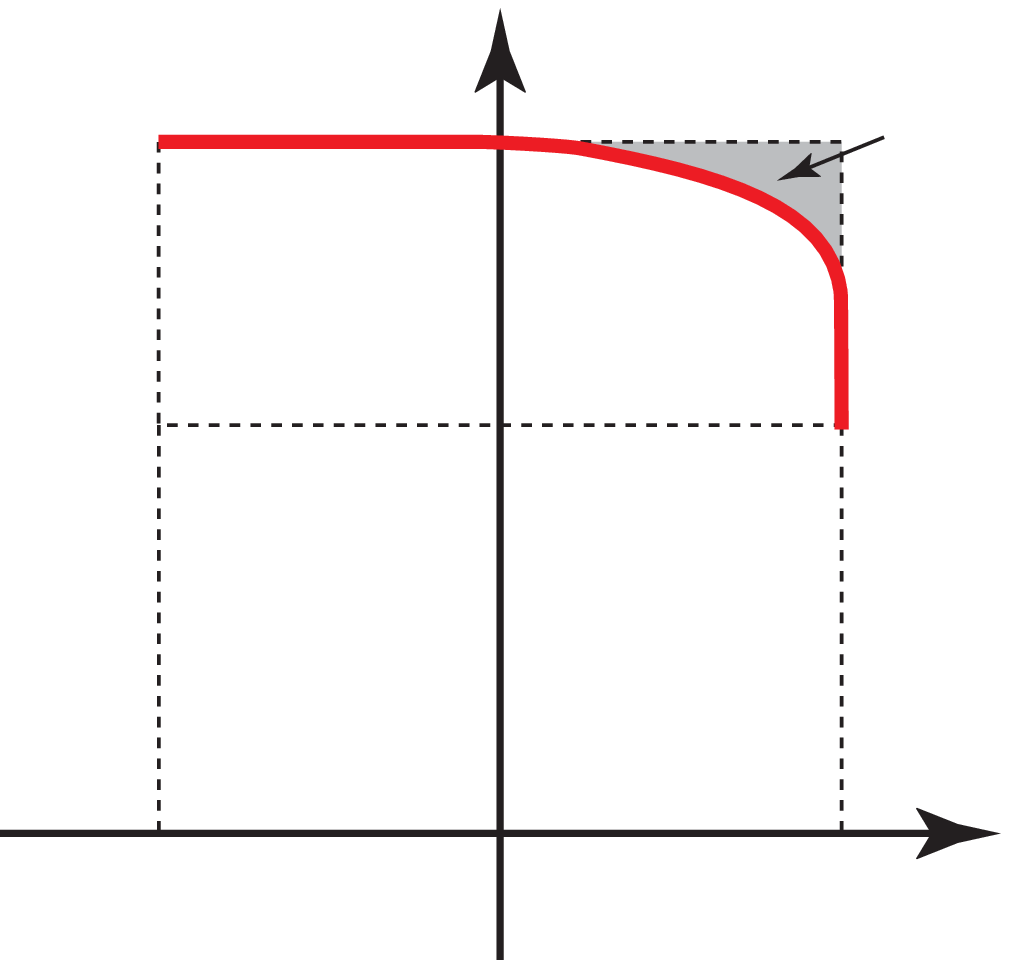}
	 %\linethickness{3pt}
	 \put(19,14){$-\epsilon$}
	 \put(148,14){$\epsilon$}
	 \put(65,87){$1-\epsilon$}
	 \put(82,137){$1$}
	 \put(160,143){$\Gamma_{\epsilon}$}
	 \put(40,152){$\gamma_{\epsilon}$}
		\end{overpic}
	\caption{}
	\label{fig: graph of corners}
\end{figure}

\begin{proposition}[{cf. \cite[Section 2.5]{LVHMW}}]\label{prop: strong fillings}
Let $\pi: (E, \Omega) \rightarrow \D$ be a symplectic Lefschetz-Bott fibration, $(V, d\lambda)=(\pi^{-1}(1), \Omega|_{\pi^{-1}(1)})$ and $\phi \in \Symp_{c}(V, d\lambda)$ the monodromy of $\pi$ along $\del \D$. 
Suppose $\Omega$ is non-degenerate on $E$ and exact near $\del E$ and on each regular fiber of $\pi$.  
Then, after deforming $\Omega$ and smoothing the corners of $E$, 
$E$ is a strong symplectic filling of the contact manifold $OB(V, \lambda; \phi)$. 
\end{proposition}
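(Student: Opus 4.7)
The plan is to deform and smooth as prescribed and then verify, in order: (a) $E_\epsilon$ is a smooth symplectic manifold; (b) the Liouville vector field $X_\lambda$ of a primitive $\lambda$ of $\Omega$ is transverse to $\partial E_\epsilon$ and points outward; (c) the induced contact structure on $\partial E_\epsilon$ agrees with $OB(V,\lambda;\phi)$ in the sense of Section~\ref{section: open book}.

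For (a), the prescribed $h(s)$ makes $h(s)\,d\varphi$ extend smoothly across $s=0$ (because $h(s)=Ks^2$ there) and keeps $d\lambda_b=h'(s)\,ds\wedge d\varphi$ an area form on $\D$. Consequently the deformed split form remains symplectic on the collar $\nu_E(\partial_h E)$, and since the deformation is supported there, $\Omega$ is symplectic on all of $E$. The smoothing curve $\gamma_\epsilon$ is $C^\infty$ by construction, so $E_\epsilon$ is a smooth manifold with boundary.

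For (b), in the corner chart $\lambda=e^t\lambda_{\partial V'}+Ke^s\,d\varphi$ has Liouville field $X_\lambda=\partial_t+\partial_s$. Along the transition segment of $\gamma_\epsilon$, parameterized by $r\mapsto(f(r),g(r))$ with $f'<0$ and $g'>0$, the outward conormal in the $(t,s)$-plane is proportional to $(g'(r),-f'(r))$, and pairing with $X_\lambda$ gives $g'(r)-f'(r)>0$. On the straight portions of $\gamma_\epsilon$ the relevant component of $X_\lambda$ already is the outward normal of the original face. Outside the corner, $X_\lambda$ is the given Liouville vector field on $\partial E$, which is outward by hypothesis.

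For (c), I decompose $\partial E_\epsilon$ into the \emph{mapping torus piece} $N_v$ (the uncut portion of $\partial_v E$) and the \emph{binding piece} $N_b$ (the remainder, consisting of the smoothed transition together with the uncut portion of $\partial_h E$). Parallel transport of $\Omega$ along $\partial\D$ identifies $N_v$ with the mapping cylinder $V_H(\phi)$; on this piece, $\lambda|_{N_v}$ coincides, up to an exact correction, with the form $\lambda+Kd\varphi$ of Section~\ref{section: open book}. Horizontal triviality identifies $N_b$ with $\partial V\times D^2(1)$, and re-parameterizing $\gamma_\epsilon$ by distance to the binding circle gives $\lambda|_{N_b}=\tilde f(r)\lambda_{\partial V}+\tilde g(r)\,d\varphi$ with $(\tilde f,\tilde g)$ meeting, possibly after a further interior contact isotopy, the three bullet conditions listed in Section~\ref{section: open book}. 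The two pieces glue via the map~\eqref{eqn: glue}, and Lemma~\ref{lemma: open book} absorbs the discrepancy between $V$ and $V'$ to complete the identification.

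The hard part is the contact matching on $N_v$: the form $\lambda+Kd\varphi$ on $V\times\R$ is not strictly invariant under $(x,\varphi)\sim(\phi(x),\varphi+H(x))$, since $\phi^*\lambda=\lambda+dH$ generates an unwanted $(1+K)\,dH$ in the pullback. This can be handled either by modifying the form by an invariant exact correction or by applying Gray stability to pass to an isotopic contact structure. Compact support of $\phi$ and constancy of $H$ near $\partial V$ confine the correction to the interior of $N_v$, so it does not disturb the gluing with $N_b$.
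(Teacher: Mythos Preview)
Your plan is essentially the paper's: deform $\Omega$ as indicated, build $E_\epsilon$, verify convexity of the boundary, split $\partial E_\epsilon$ into a mapping-torus piece and a binding-plus-corner piece, match each with the corresponding part of the open book construction, check compatibility on the overlap, and finish with Lemma~\ref{lemma: open book}. Your steps (a) and (b) and the treatment of the binding piece are handled in the paper by writing down the explicit map $\Psi_h(x,r,\varphi)=(\epsilon,x,re^{2\pi i\varphi})$ for $r<1-\epsilon$ and $(f(r-1),x,g(r-1)e^{2\pi i\varphi})$ for $r\ge 1-\epsilon$, and computing the pullback of the Liouville form directly.

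The one place where the paper is sharper than your sketch is your ``hard part''. Rather than invoking Gray stability or an abstract exact correction, the paper observes that $M_v=M\cap\partial_vE$ is naturally the \emph{period-one} mapping torus $V'(\phi)=(V'\times\R)/(\phi(x),\varphi+1)\sim(x,\varphi)$ carrying the contact form $\lambda+d(H\varphi)$, and then writes down the explicit contactomorphism
\[
\Psi_v:V'_H(\phi)\to V'(\phi),\qquad (x,\varphi)\mapsto(x,\varphi/H).
\]
(In particular, your statement that parallel transport identifies $N_v$ with $V_H(\phi)$ is slightly off: parallel transport along $\partial\D$ lands in the period-one model, and the rescaling $\varphi\mapsto\varphi/H$ is exactly what bridges the two.) Since $H\equiv 2\pi$ near $\partial V'$, this rescaling is a rigid reparametrisation on the collar, so compatibility with the binding gluing~$\Phi$ of~\eqref{eqn: glue} is immediate without any further isotopy. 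Your Gray-stability alternative is valid but less direct.
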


\begin{proof}
We may assume that $\Omega$ has been already deformed so that $\lambda_{b}$ agrees with $h(s)d\varphi$.  
Let $E_{\epsilon}$ be a smooth manifold constructed above. 
By construction, it has a convex boundary. 
Hence, it suffices to show that $(M=\del E_{\epsilon}, \ker(i_{X}\Omega|_{TM}))$ is contactomorphic to $OB(V, \lambda; \phi)$, where $X$ 
is the Liouville vector field defined near $\del E_{\epsilon}$. 
Set 
$$
M_{v}= M \cap \del_{v}E,\  
M_{h}=M \cap \del_{h} E 
$$
$$
M_{c}=\Phi_{h}^{-1}(\{ (s,x,te^{2\pi i \varphi}) \in [-\epsilon, \epsilon] \times \del V' \times \D\mid (s,t) \in \gamma_{\epsilon} \}).$$ 
Unless otherwise noted, we do not distinguish $\nu_{E}(\del_{h}E)$ from 
$[-\epsilon, \epsilon] \times \del V' \times \D$ in this proof.
Since the monodromy of the symplectic fibration $\pi|: M_{v} \rightarrow \del \D$ is $\phi$, 
it is isomorphic to 
$$
	V'(\phi)= (V' \times \R) / (\phi(x), \varphi+1) \sim (x, \varphi)
$$
equipped with the contact form $\lambda + d(H\varphi)$, where $H$ is a function with 
$\phi^{*}\lambda-\lambda=dH$ and $H \equiv 2\pi$ near $\del V'$. 
Define the contactomorphism $\Psi_{v}: V'_{H}(\phi) \rightarrow V'(\phi) \cong M_{v}$
by
$$
	\Psi_{v}: (x,\varphi) \mapsto (x, {\varphi}/H).
$$
Next, we define the diffeomorphism 
$\Psi_{h}: \del V' \times D_{1+\epsilon} \rightarrow M_{h} \cup M_{c} \subset [-\epsilon, \epsilon] \times \del V' \times \D$ 
by 
$$
	\Psi_{h}(x,r,\varphi) = 
	\begin{cases}
		 (\epsilon, x, re^{2\pi i \varphi}) & (r< 1-\epsilon), \\
		(f(r-1), x, g(r-1)e^{2\pi i \varphi}) & (r \geq 1-\epsilon). 
	\end{cases}
$$
A direct computation shows that  
$$
	\Psi_{h}^{*}(e^t \lambda_{\del V'} + h(r)d\varphi)= 
	\begin{cases}
	e^{\epsilon}\lambda|_{\del V'}+ h(r)d\varphi & (r<1-\epsilon), \\
	e^{f(r-1)}\lambda|_{\del V'}+h(g(r-1))d\varphi & (r \geq 1-\epsilon).
	\end{cases}
%	\begin{cases}
%		e^{\epsilon}\lambda|_{\del V'}+ Ks^2 d\varphi & (s<1-\epsilon) \\
%		e^{1-s}\lambda|_{\del V'}+ Kd\varphi & (s\geq 1-)
%	\end{cases}
$$
The pull-back $1$-form on $\del V' \times D^2(1+\epsilon)$ satisfies the conditions for them being contact in Section \ref{section: open book}.
As a consequence, the following diagram is commute and each map is a contactomorphim: 
\[
   \xymatrix{
       \ar[r]^-{\Phi} \del V'\times A(1,1+\epsilon) \ar[d]_{\Psi_{h}} & 
       [-\epsilon, 0] \times \del V' \times S^1 \subset V_{H}(\phi) \ar[d]^{\Psi_{v}} \\
      M_{h}\cap M_{c} \supset \Psi_{h}(\del V'\times A(1,1+\epsilon))  \ar[r]^-{Id} & \Psi_{v}([-\epsilon, 0] \times \del V' \times S^1) \subset M_{v} \cap M_{c}, 
   }
\]
where $\Phi$ is the map (\ref{eqn: glue}) defined in the previous subsection.
Hence we conclude that $(M, \ker(i_{X}\Omega|_{TM}))$ is contactomorphic to $OB(V', \lambda|_{V'}; \phi|_{V'})$, which is 
contactomorphic to $OB(V, \lambda; \phi)$ by Lemma \ref{lemma: open book}. 
This completes the proof.
\end{proof}

\begin{corollary}
Let $(V,\omega=d\lambda)$ be a Liouville domain and 
$\phi$ an element of $\Symp_{c}(V, \omega)$. 
Suppose there exists a collection of spherically fibered coisotropic submanifolds $(C_{1}, \ldots, C_{k})$ in $(V,\omega)$ such that 
$\phi$ is symplectically isotopic to the product $\tau_{C_{1}} \circ \cdots \circ \tau_{C_{k}}$ of fibered Dehn twists. 
Then, a contact structure compatible with the open book $(V, \lambda; \phi)$ is strongly fillable. 
\end{corollary}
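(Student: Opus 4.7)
The plan is to realize $\phi$ as the monodromy of an appropriately constructed symplectic Lefschetz-Bott fibration over the closed disk, and then invoke Proposition \ref{prop: strong fillings} (the corners-smoothing statement) to conclude that the total space provides the desired strong symplectic filling.

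Concretely, the first step is to feed the given collection $(C_{1}, \ldots, C_{k})$ of spherically fibered coisotropic submanifolds of $(V,\omega)$ into the realization result cited just after Definition \ref{def: Lefschetz-Bott} (based on \cite[Theorem 2.13]{WW}). This produces a symplectic Lefschetz-Bott fibration $\pi: (E, \Omega) \rightarrow D^{2}$ whose regular fibers are symplectomorphic to $(V, \omega)$ and whose vanishing cycles with respect to some distinguished basis of vanishing paths are exactly $(C_{1}, \ldots, C_{k})$. By construction this fibration comes with $\Omega$ exact on each regular fiber and exact near $\del E$ (indeed it is built as a split form in a collar of $\del_{h}E$ and admits a Liouville primitive near $\del_{v}E$), so the hypotheses of Proposition \ref{prop: strong fillings} are met. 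The monodromy of $\pi$ around $\del D^{2}$, computed with respect to the chosen distinguished basis, is the product $\tau_{C_{1}} \circ \cdots \circ \tau_{C_{k}}$ of fibered Dehn twists along the vanishing cycles.

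The second step is to note that, by hypothesis, this product is symplectically isotopic in $\Symp_{c}(V, \omega)$ to $\phi$. Since isotopic monodromies yield contactomorphic open books (the mapping cylinder construction $V_{H}(\phi)$ in Section \ref{section: open book} depends on the isotopy class of $\phi$ only, up to contactomorphism of $OB(V, \lambda; \phi)$), the contact manifold $OB(V, \lambda; \tau_{C_{1}} \circ \cdots \circ \tau_{C_{k}})$ is contactomorphic to $OB(V, \lambda; \phi)$. Applying Proposition \ref{prop: strong fillings} to $\pi$ then shows that after deforming $\Omega$ and smoothing the corners, the total space $E_{\epsilon}$ is a strong symplectic filling of $OB(V, \lambda; \tau_{C_{1}} \circ \cdots \circ \tau_{C_{k}})$, and hence, via the contactomorphism, of any contact structure compatible with $(V, \lambda; \phi)$.

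The only delicate point is verifying that the realization produced by the reference really satisfies the exactness and triviality conditions in the hypothesis of Proposition \ref{prop: strong fillings}; but this is built into the standard construction of Lefschetz-Bott fibrations from collections of spherically fibered coisotropic submanifolds, so no additional work is needed here. I expect no genuine obstacle beyond bookkeeping between the symplectic isotopy of monodromies and the resulting contactomorphism of open books.
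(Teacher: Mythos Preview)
Your proposal is correct and follows essentially the same route as the paper: build a symplectic Lefschetz-Bott fibration over $\D$ from the collection $(C_{1},\ldots,C_{k})$ via the realization result cited after Definition \ref{def: Lefschetz-Bott}, and then apply Proposition \ref{prop: strong fillings}. The paper's own proof is just these two sentences; your added remarks on the exactness hypotheses and on why isotopic monodromies give contactomorphic open books are natural elaborations of what the paper leaves implicit.
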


\begin{proof}
One can associate to the collection $(C_{1}, \ldots, C_{k})$ in $(V,\omega)$ 
a symplectic Lefschetz-Bott fibration over $\D$ whose collection of vanishing cycles equals 
the given collection for some distinguished basis. 
The corollary straightforwardly follows from this fact coupled with Proposition \ref{prop: strong fillings}. 
\end{proof}

\begin{acknowledge}
The author would like to thank Tadashi Ashikaga, Joontae Kim, Myeonggi Kwon and Kaoru Ono for helpful discussions. 
\end{acknowledge}

%next line adds the Bibliography to the contents page
\addcontentsline{toc}{chapter}{Bibliography}
%uncomment next line to change bibliography name to references
%\renewcommand{\bibname}{References}
%\bibliography{refs}        %use a bibtex bibliography file refs.bib
%\bibliographystyle{abbrv}
\bibliographystyle{abbrv}
\bibliography{LineBundle}

\begin{thebibliography}{10}


\bibitem{AA}
B.~Acu and R.~Avdek.
\newblock Symplectic mapping class group relations generalizing the chain
  relation.
\newblock {\em Internat. J. Math.}, 27(12):1650096, 26, 2016.

\bibitem{AA2}
B.~Acu and R.~Avdek.
\newblock Relations in symplectomorphism groups from the geometry of lefschetz
  pencils and generalized lantern relations.
\newblock {\em in preparation}.

\bibitem{AO}
S.~Akbulut and B.~Ozbagci.
\newblock Lefschetz fibrations on compact {S}tein surfaces.
\newblock {\em Geom. Topol.}, 5:319--334, 2001.

\bibitem{Biran}
P.~Biran.
\newblock Lagrangian barriers and symplectic embeddings.
\newblock {\em Geom. Funct. Anal.}, 11(3):407--464, 2001.

\bibitem{BCS}
J.~Bowden, D.~Crowley, and A.~I. Stipsicz.
\newblock The topology of {S}tein fillable manifolds in high dimensions {I}.
\newblock {\em Proc. Lond. Math. Soc. (3)}, 109(6):1363--1401, 2014.

\bibitem{CDvK}
R.~Chiang, F.~Ding, and O.~van Koert.
\newblock Open books for {B}oothby-{W}ang bundles, fibered {D}ehn twists and
  the mean {E}uler characteristic.
\newblock {\em J. Symplectic Geom.}, 12(2):379--426, 2014.

\bibitem{CE}
K.~Cieliebak and Y.~Eliashberg.
\newblock {\em From {S}tein to {W}einstein and back}, volume~59 of {\em
  American Mathematical Society Colloquium Publications}.
\newblock American Mathematical Society, Providence, RI, 2012.
\newblock Symplectic geometry of affine complex manifolds.

\bibitem{Dimca}
A.~Dimca.
\newblock {\em Singularities and topology of hypersurfaces}.
\newblock Universitext. Springer-Verlag, New York, 1992.

\bibitem{Diogo-Lisi}
L.~Diogo and S.~T. Lisi.
\newblock Symplectic homology for complements of smooth divisors.
\newblock {\em arXiv preprint arXiv:1804.08014}, 2018.

\bibitem{Do}
S.~K. Donaldson.
\newblock Symplectic submanifolds and almost-complex geometry.
\newblock {\em J. Differential Geom.}, 44(4):666--705, 1996.

\bibitem{El90}
Y.~Eliashberg.
\newblock Topological characterization of {S}tein manifolds of dimension
  {$>2$}.
\newblock {\em Internat. J. Math.}, 1(1):29--46, 1990.

\bibitem{EKP}
Y.~Eliashberg, S.~S. Kim, and L.~Polterovich.
\newblock Geometry of contact transformations and domains: orderability versus
  squeezing.
\newblock {\em Geom. Topol.}, 10:1635--1747, 2006.

\bibitem{Evans}
J.~D. Evans.
\newblock Symplectic mapping class groups of some {S}tein and rational
  surfaces.
\newblock {\em J. Symplectic Geom.}, 9(1):45--82, 2011.

\bibitem{Fauck}
A.~Fauck.
\newblock Rabinowitz-{F}loer homology on {B}rieskorn spheres.
\newblock {\em Int. Math. Res. Not. IMRN}, (14):5874--5906, 2015.

\bibitem{Geiges}
H.~Geiges.
\newblock {\em An introduction to contact topology}, volume 109 of {\em
  Cambridge Studies in Advanced Mathematics}.
\newblock Cambridge University Press, Cambridge, 2008.

\bibitem{GP}
E.~Giroux and J.~Pardon.
\newblock Existence of {L}efschetz fibrations on {S}tein and {W}einstein
  domains.
\newblock {\em Geom. Topol.}, 21(2):963--997, 2017.

\bibitem{Kobayashi}
S.~Kobayashi.
\newblock Principal fibre bundles with the {$1$}-dimensional toroidal group.
\newblock {\em T\^{o}hoku Math. J. (2)}, 8:29--45, 1956.

\bibitem{LVHMW}
S.~Lisi, J.~Van Horn-Morris, and C.~Wendl.
\newblock On symplectic fillings of spinal open book decompositions i:
  Geometric constructions.
\newblock {\em arXiv preprint arXiv:1810.12017}, 2018.

\bibitem{LP}
A.~Loi and R.~Piergallini.
\newblock Compact {S}tein surfaces with boundary as branched covers of {$B^4$}.
\newblock {\em Invent. Math.}, 143(2):325--348, 2001.

\bibitem{MS3}
D.~McDuff and D.~Salamon.
\newblock {\em Introduction to symplectic topology}.
\newblock Oxford Graduate Texts in Mathematics. Oxford University Press,
  Oxford, third edition, 2017.

\bibitem{Milnor}
J.~Milnor.
\newblock {\em Morse theory}.
\newblock Based on lecture notes by M. Spivak and R. Wells. Annals of
  Mathematics Studies, No. 51. Princeton University Press, Princeton, N.J.,
  1963.

\bibitem{Nied}
K.~Niederkr{\"u}ger.
\newblock {\em Compact Lie group actions on contact manifolds}.
\newblock PhD thesis, Universit{\"a}t zu K{\"o}ln, 2005.

\bibitem{Oba}
T.~Oba.
\newblock Higher-dimensional contact manifolds with infinitely many {S}tein
  fillings.
\newblock {\em Trans. Amer. Math. Soc.}, 370(7):5033--5050, 2018.

\bibitem{OS}
B.~Ozbagci and A.~I. Stipsicz.
\newblock Contact 3-manifolds with infinitely many {S}tein fillings.
\newblock {\em Proc. Amer. Math. Soc.}, 132(5):1549--1558, 2004.

\bibitem{Per}
T.~Perutz.
\newblock Lagrangian matching invariants for fibred four-manifolds. {II}.
\newblock {\em Geom. Topol.}, 12(3):1461--1542, 2008.

\bibitem{PV}
O.~Plamenevskaya and J.~Van Horn-Morris.
\newblock Planar open books, monodromy factorizations and symplectic fillings.
\newblock {\em Geom. Topol.}, 14(4):2077--2101, 2010.

\bibitem{PPP}
P.~Popescu-Pampu.
\newblock On the cohomology rings of holomorphically fillable manifolds.
\newblock In {\em Singularities {II}}, volume 475 of {\em Contemp. Math.},
  pages 169--188. Amer. Math. Soc., Providence, RI, 2008.

\bibitem{Sch}
M.~Schlessinger.
\newblock Rigidity of quotient singularities.
\newblock {\em Invent. Math.}, 14:17--26, 1971.

\bibitem{Sei}
P.~Seidel.
\newblock A long exact sequence for symplectic {F}loer cohomology.
\newblock {\em Topology}, 42(5):1003--1063, 2003.

\bibitem{SeiBook}
P.~Seidel.
\newblock {\em Fukaya categories and {P}icard-{L}efschetz theory}.
\newblock Zurich Lectures in Advanced Mathematics. European Mathematical
  Society (EMS), Z\"{u}rich, 2008.

\bibitem{ST}
I.~Smith and R.~Thomas.
\newblock Symplectic surgeries from singularities.
\newblock {\em Turkish J. Math.}, 27(1):231--250, 2003.

\bibitem{WW}
K.~Wehrheim and C.~T. Woodward.
\newblock Exact triangle for fibered {D}ehn twists.
\newblock {\em Res. Math. Sci.}, 3:Paper No. 17, 75, 2016.

\bibitem{Wen}
C.~Wendl.
\newblock Strongly fillable contact manifolds and {$J$}-holomorphic foliations.
\newblock {\em Duke Math. J.}, 151(3):337--384, 2010.

\end{thebibliography}

\end{document}